\documentclass[12pt]{amsart}

\usepackage{url}
\usepackage[dvipsnames]{xcolor}
\usepackage{tikz}
\usepackage{enumitem}
\usetikzlibrary{calc}
\usepackage{afterpage}
\usepackage{float}
\usepackage{graphicx}
\usepackage{amsmath}
\usepackage{color}
\usepackage{caption}
\usepackage{subcaption}
\usepackage{amsfonts,amssymb,amscd}
\usepackage{mathrsfs}
\usepackage{bm}
\usepackage{verbatim} 
\usepackage{manfnt}
\usepackage{yhmath}

\usepackage[toc,page]{appendix}

\calclayout

\newtheoremstyle{remboldstyle}
  {}{}{\itshape}{}{\bfseries}{.}{.5em}{{\thmname{#1 }}{\thmnumber{#2}}{\thmnote{ (#3)}}}
\theoremstyle{remboldstyle}

\usepackage[outdir=./]{epstopdf}

\newtheorem{thm}{Theorem}[section]
\newtheorem{prop}[thm]{Proposition}

\newtheorem{cor}[thm]{Corollary}

\newtheorem{thmx}{Theorem}

\theoremstyle{definition}
\newtheorem{definition}[thm]{Definition}
\newtheorem{construction}[thm]{Construction}
\newtheorem{example}[thm]{Example}

\newtheorem{rem}[thm]{Remark}
\newtheorem{notation}[thm]{Notation}

\makeatletter
\DeclareFontFamily{U}{tipa}{}
\DeclareFontShape{U}{tipa}{m}{n}{<->tipa10}{}
\newcommand{\arc@char}{{\usefont{U}{tipa}{m}{n}\symbol{62}}}%

\newcommand{\arc}[1]{\mathpalette\arc@arc{#1}}

\newcommand{\arc@arc}[2]{%
  \sbox0{$\m@th#1#2$}%
  \vbox{
    \hbox{\resizebox{\wd0}{\height}{\arc@char}}
    \nointerlineskip
    \box0
  }%
}
\makeatother

\numberwithin{equation}{section}

\catcode`\@=11
\newdimen\cdsep
\def\cdstrut{\vrule height .6\cdsep width 0pt depth .4\cdsep}
\def\@cdstrut{{\advance\cdsep by 2em\cdstrut}}

\def\arrow#1#2{
  \ifx d#1
    \llap{$\scriptstyle#2$}\left\downarrow\cdstrut\right.\@cdstrut\fi
  \ifx u#1
    \llap{$\scriptstyle#2$}\left\uparrow\cdstrut\right.\@cdstrut\fi
  \ifx r#1
    \mathop{\hbox to \cdsep{\rightarrowfill}}\limits^{#2}\fi
  \ifx l#1
    \mathop{\hbox to \cdsep{\leftarrowfill}}\limits^{#2}\fi
}
\catcode`\@=12

\newcommand{\Chat}{\widehat{\mathbb{C}}}

\DeclareMathOperator{\interior}{int}
\DeclareMathOperator{\closure}{closure}

\DeclareMathOperator{\ratd}{\mathrm{Rat}_d}
\DeclareMathOperator{\pold}{\mathrm{Pol}_d}

\DeclareMathOperator{\poldmh}{\mathrm{Pol}_{d-1}^{HC}}
\DeclareMathOperator{\rat}{\mathrm{Rat}}
\DeclareMathOperator{\CP}{\mathrm{CP}}
\DeclareMathOperator{\CV}{\mathrm{CV}}

\makeatletter
\@namedef{subjclassname@2020}{\textup{2020} Mathematics Subject Classification}
\makeatother

\begin{document}


\title[Dynamics of Hyperbolic Schwarz Reflections]{Dynamics of Hyperbolic Schwarz Reflections}

\author{Kirill Lazebnik}
\thanks{The author was partially supported by NSF grant DMS-2541258.}






\begin{abstract} We give a conformal mating description for the class $\Sigma_d$ of hyperbolic Schwarz reflections with connected and full filled Julia set; this parallels the recent development in several non-hyperbolic settings. We show that the escaping dynamics of $\Sigma_d$ can be characterized independently by the class of \emph{annular Schwarz reflections} we introduce. Our main result is that each $\sigma\in\Sigma_d$ decomposes into a unique hyperbolic polynomial factor and a unique annular Schwarz reflection factor and, conversely, any two such factors may be mated to determine a unique such Schwarz reflection. We also show that annular Schwarz reflections converge to Nielsen maps in suitable settings, connecting the hyperbolic and non-hyperbolic mating descriptions.
\end{abstract}


\maketitle


\section{Introduction}

\subsection{Overview.}\label{overview} We denote by $\Chat$ the Riemann sphere.

\begin{definition} A domain $D\subset\Chat$ is called a \emph{Quadrature Domain} if the function $\iota:\partial D\rightarrow\Chat$ defined by $\iota(z):=\overline{z}$ continuously extends to a holomorphic function $\iota: D\rightarrow\Chat$, in which case $\iota$ is called the \emph{Schwarz Function}, and the conjugate $\sigma:=\overline{\iota}$ is called the \emph{Schwarz Reflection}.
\end{definition}

The simplest examples of Schwarz reflections are reflections in a Euclidean circle $C$, so that any Euclidean disc $D$ is a quadrature domain. The image $r(D)$ under any rational mapping $r$ which is injective in $D$ is also a quadrature domain. Quadrature domains and Schwarz reflections feature prominently in several areas of mathematics: function theory \cite{MR4381220}, operator theory \cite{MR1302653}, random normal matrix theory \cite{MR1986427}, potential theory \cite{MR1094715} as well as fluid dynamics \cite{Ric72}. Recently, it has been realized that a very profitable point of view is to study the dynamics of Schwarz Reflections: both because these dynamical systems exhibit rich phenomena \cite{sabyaicmpaper}, but also because the dynamical perspective has produced important applications in the aforementioned areas \cite{MR3454377}, \cite{rashmita2025topologysingularitiesquadraturedomains}. 
  
Let us consider an example (following Section 4.1.1 of \cite{MR5098194}), and illustrated in Figures \ref{pinchedfig} and \ref{nonpinchedfig}.
  
\afterpage{  
\begin{figure}[ht]
    \centering
    
\begin{minipage}{\textwidth}
\centering    

    \begin{subfigure}[b]{0.32\textwidth}
        \centering
        \includegraphics[width=\linewidth]{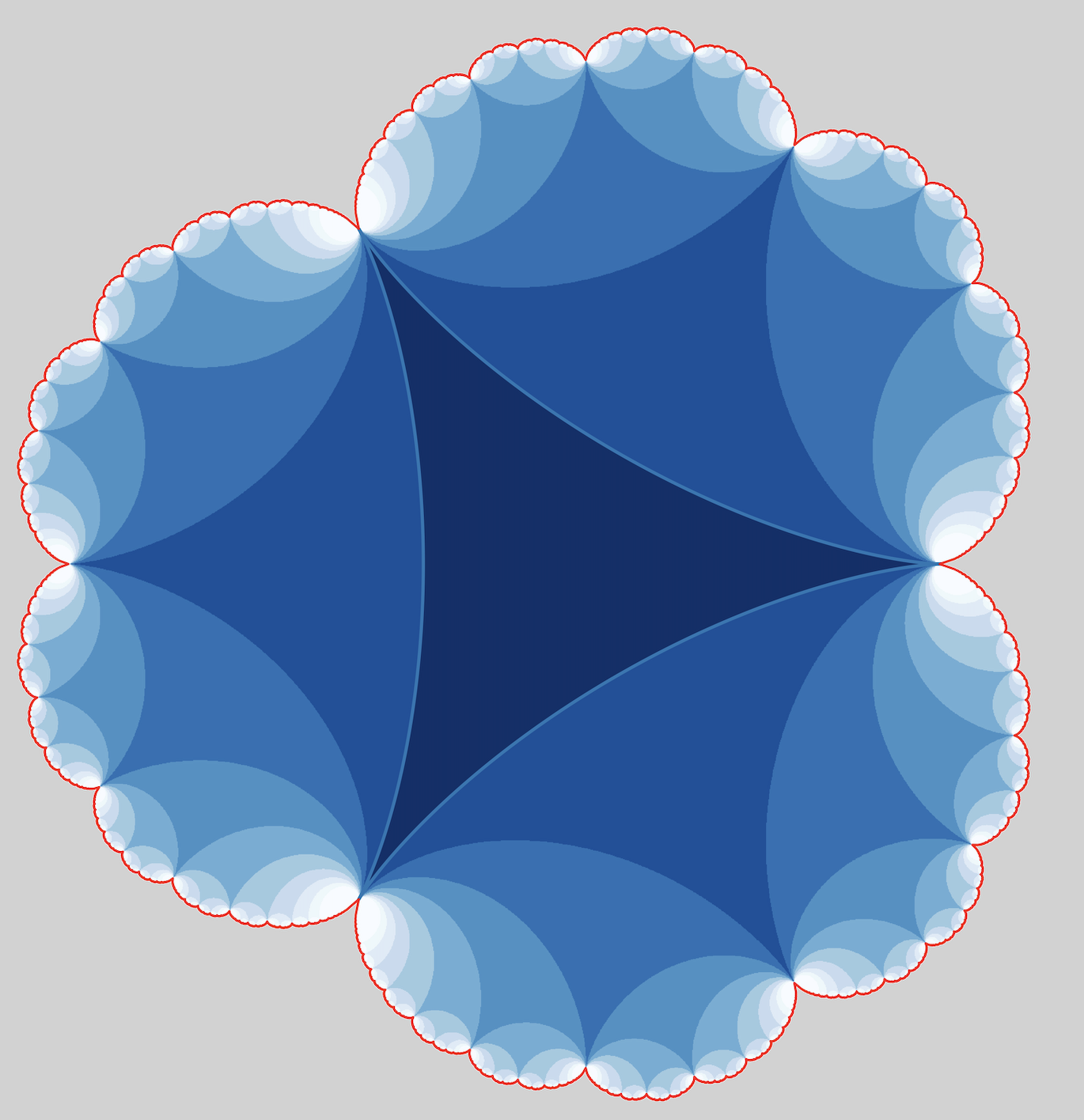}
        \caption{$\sigma_2: r_2(\mathbb{D}^*) \rightarrow \Chat$}
        \label{fig:first1}
    \end{subfigure}
    \hfill
    \begin{subfigure}[b]{0.32\textwidth}
        \centering
        \includegraphics[width=\linewidth]{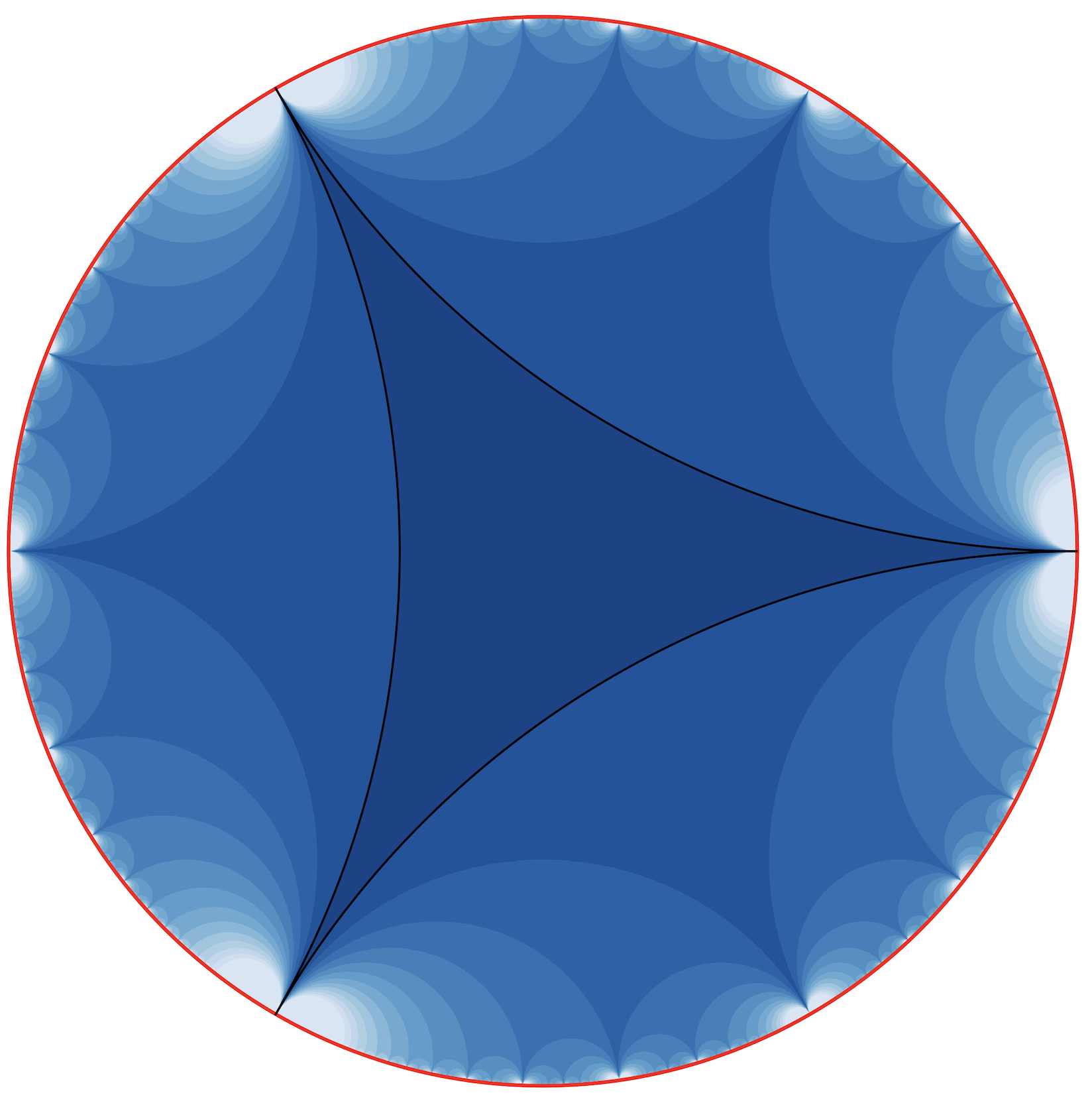}
        \caption{$\rho: \mathbb{D}\setminus T \rightarrow \mathbb{D}$}
        \label{fig:second1}
    \end{subfigure}
    \hfill
    \begin{subfigure}[b]{0.32\textwidth}
        \centering
        \includegraphics[width=\linewidth]{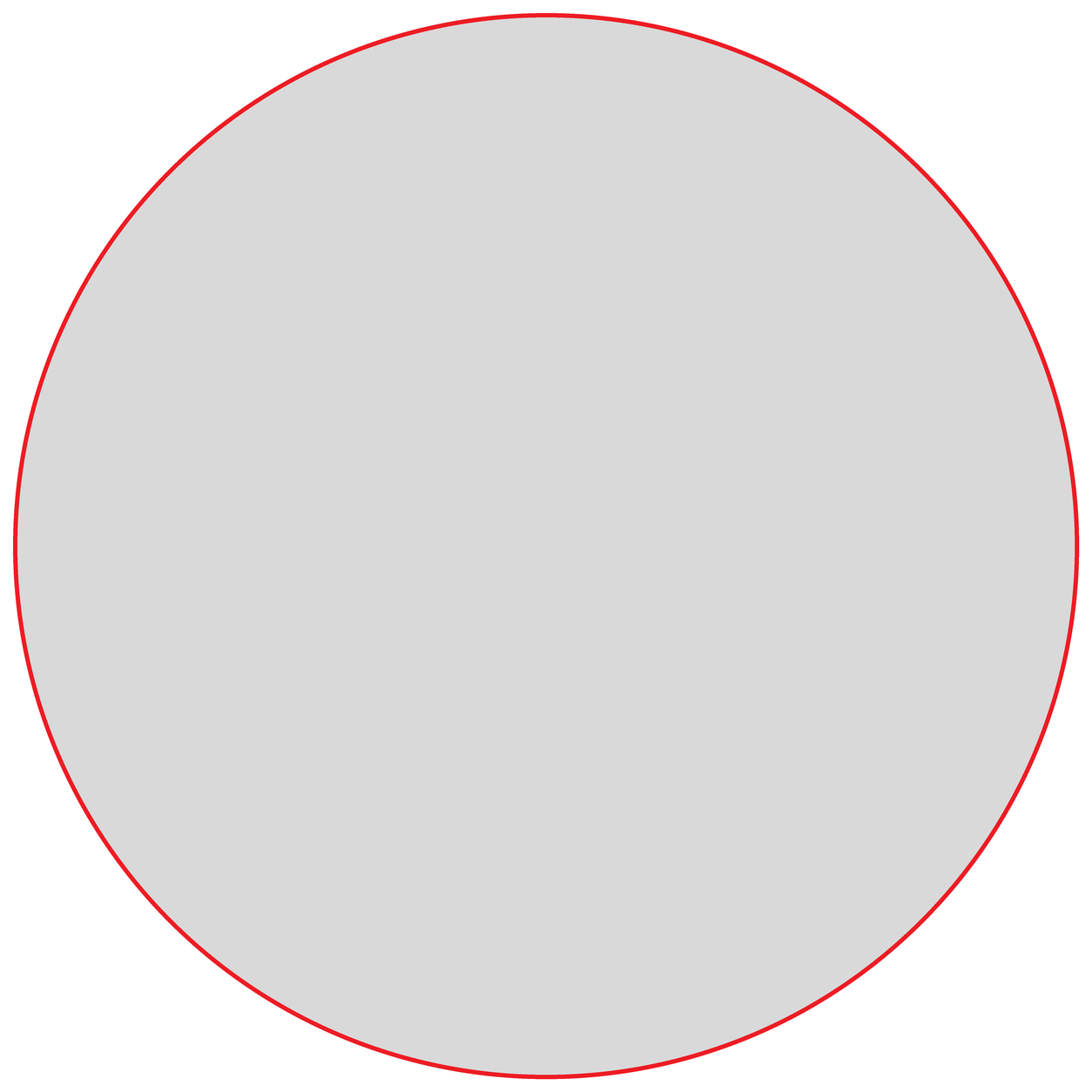}
        \caption{$\overline{z}^2: \mathbb{D} \rightarrow \mathbb{D}$}
        \label{fig:third1}
    \end{subfigure}
    \captionsetup{width=.95\textwidth}
    \caption{This Figure illustrates (A) the non-hyperbolic Schwarz reflection $\sigma_2$, as a mating of (B) the Nielsen map $\rho$, with (C) the polynomial $\overline{z}^2$; see Theorem 1.1 of \cite{MR4706575}. The darkest blue triangle in (A) is the deltoid $\Chat\setminus r_2(\mathbb{D}^*)$, and subsequent $\sigma_2$-preimages of the deltoid are shown in shades of lighter blue. The Julia set is in red.}
    \label{pinchedfig}
    \end{minipage}
\vspace{1cm}

\begin{minipage}{\textwidth}
    \centering
    \begin{subfigure}[b]{0.32\textwidth}
        \centering
        \includegraphics[width=\linewidth]{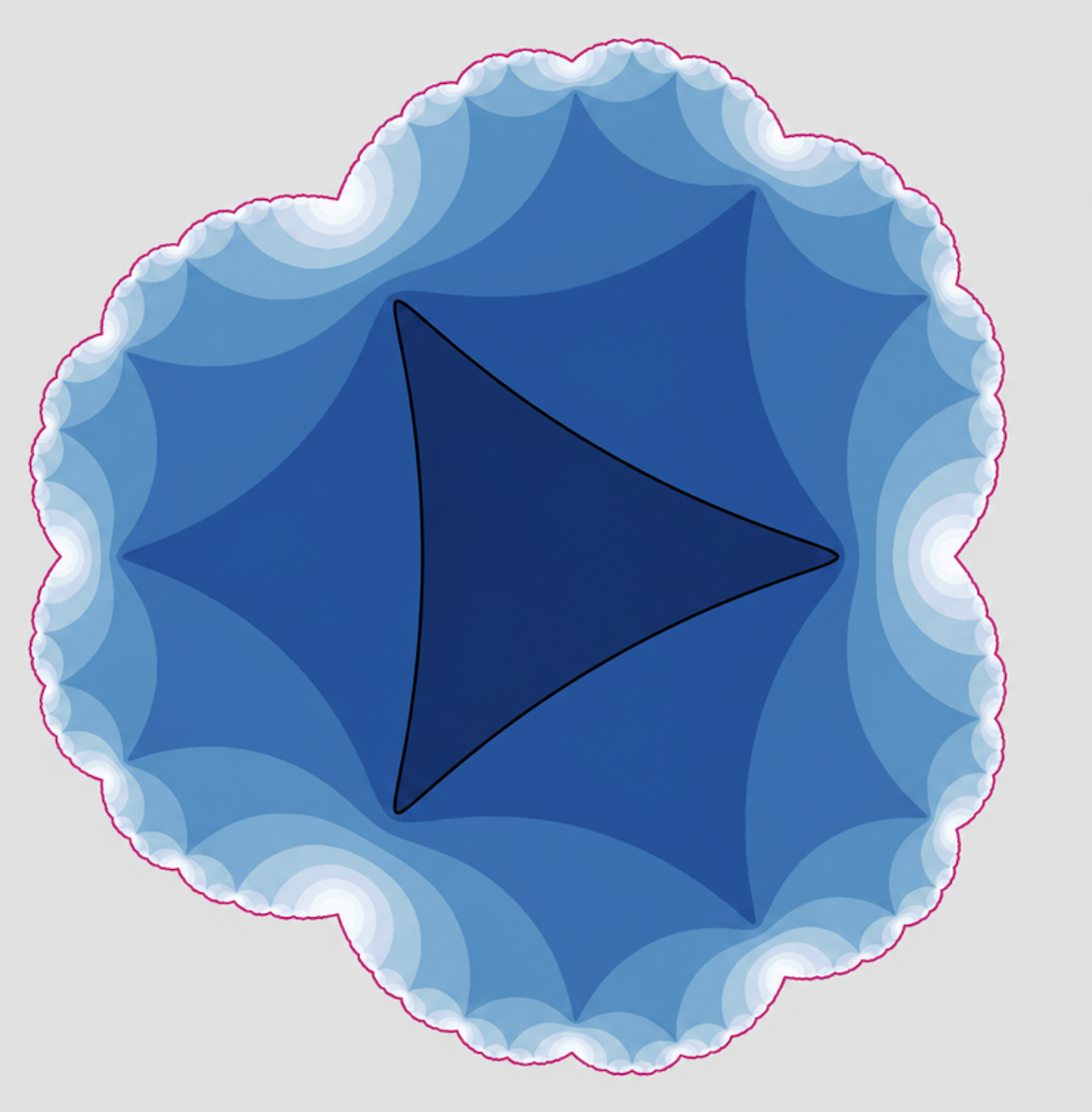}
        \caption{$\sigma: r(\mathbb{D}^*) \rightarrow \Chat$ }
        \label{fig:first}
    \end{subfigure}
    \hfill
    \begin{subfigure}[b]{0.32\textwidth}
        \centering
        \includegraphics[width=\linewidth]{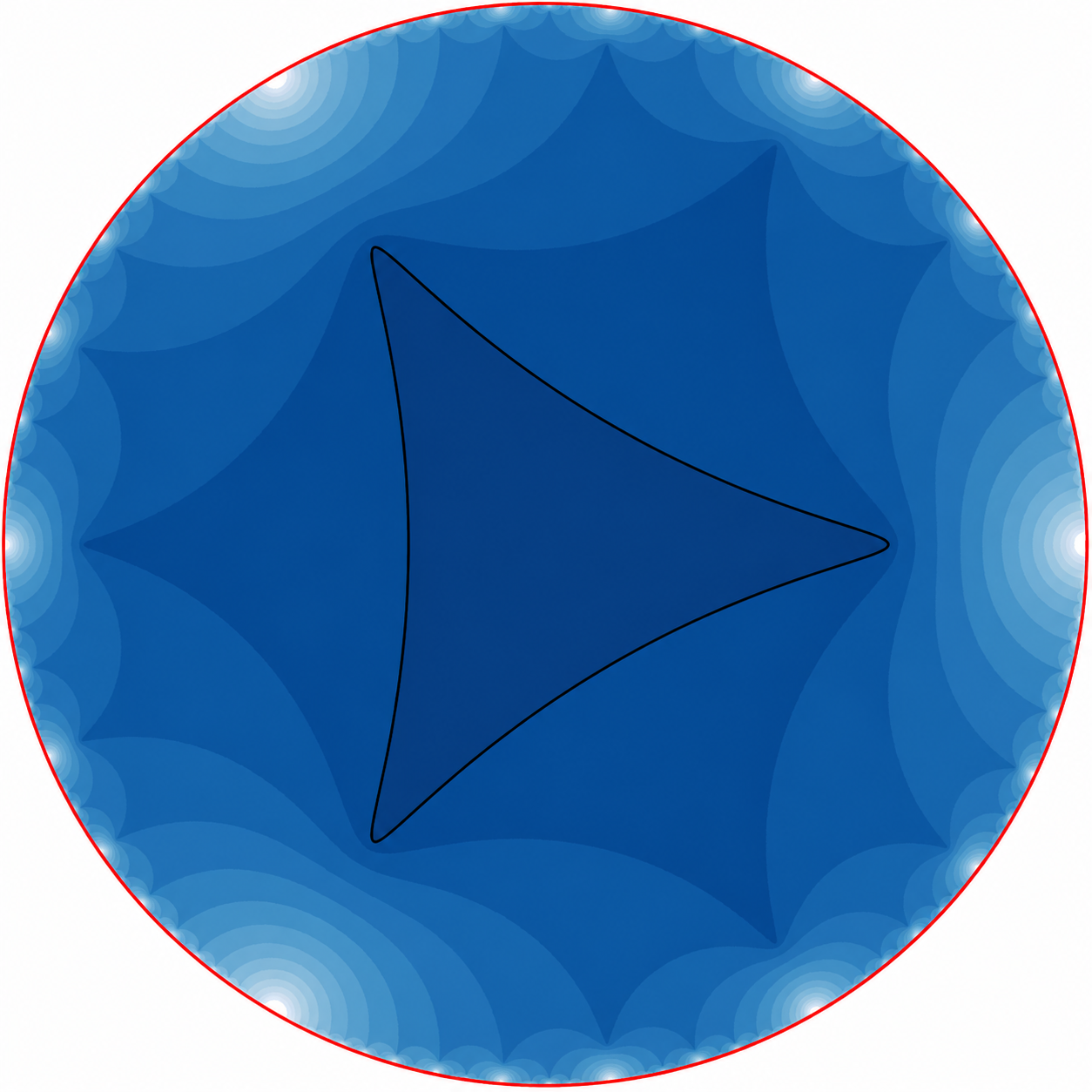}
        \caption{$\sigma_f: f(A_{\sqrt{s}}) \rightarrow \mathbb{D}$}
        \label{fig:second}
    \end{subfigure}
    \hfill
    \begin{subfigure}[b]{0.32\textwidth}
        \centering
        \includegraphics[width=\linewidth]{interiorunitdisc.png}
        \caption{$\overline{z}^2: \mathbb{D} \rightarrow \mathbb{D}$}
        \label{fig:third}
    \end{subfigure}
    \captionsetup{width=.95\textwidth}
    \caption{This Figure illustrates the description of (A) the hyperbolic Schwarz reflection $\sigma_c$ for $c>2$ as a mating of (B) the annular Schwarz reflection $\sigma_f$, with (C) the polynomial $\overline{z}^2$. We prove such a mating description in the broader context of hyperbolic Schwarz reflections with connected and full filled Julia set; see Figure \ref{fig:schwarz-reflection-sphere} for an example where $\interior K(\sigma)$ is disconnected.}
    \label{nonpinchedfig}
    \end{minipage}
\end{figure}
}

  \begin{example}\label{first_example} Let $r_c(z):=z+\frac{1}{cz^2}$. For $c\geq2$, the map $r_c$ is univalent (injective) on $\mathbb{D}^*:=\{z : |z|>1\}$, and $r_c(\mathbb{D}^*)$ is a quadrature domain with Schwarz Reflection given by
\begin{equation}\label{deltoidsystem1}  \sigma_c(z):=r_c\left(\frac{1}{\overline{r_c^{-1}(z)}}\right) \textrm{ for } z\in r_c(\mathbb{D}^*), \end{equation}
where the unique $\mathbb{D}^*$-valued branch of $r_c^{-1}$ is chosen in (\ref{deltoidsystem1}). The formula (\ref{deltoidsystem1}) defines an  \emph{open} dynamical system
\begin{equation}\label{dynsystem} \sigma_c: r_c(\mathbb{D}^*) \rightarrow \Chat,
\end{equation} 
where we use the terminology open to underscore that the range $\Chat$ of $\sigma_c$ properly contains the domain $r_c(\mathbb{D}^*)$, and so not all points have well-defined forward orbits. Indeed, $\Chat$ naturally decomposes into two $\sigma_c$-invariant regions: the \emph{escaping set} $T(\sigma_c)$ consisting of points without a well-defined forward orbit (colored blue in Figures \ref{fig:first1}, \ref{fig:first}), and its complement the \emph{non-escaping set} $K(\sigma_c)$ (colored grey in Figures \ref{fig:first1}, \ref{fig:first}).
\end{example}

In \cite{MR4706575}, the system $\sigma_2: r(\mathbb{D}^*) \rightarrow \Chat$ was described as a \emph{conformal mating} of two simpler systems as illustrated in Figure \ref{pinchedfig}: in particular they showed that the non-escaping dynamics $\sigma_2: K(\sigma_2)\rightarrow K(\sigma_2)$ are equivalent to the polynomial $\overline{z}^2: \mathbb{D}\rightarrow\mathbb{D}$, and the escaping dynamics $\sigma_2: T(\sigma_2)\rightarrow T(\sigma_2)$ are equivalent to the \emph{Nielsen map} $\rho: \mathbb{D}\setminus T\rightarrow\mathbb{D}$ defined piecewise by reflection in the arcs of a hyperbolic triangle $T$ (see Definition \ref{necklace} for a precise definition). Their description has given rise to a very active field of study in recent years: see for instance \cite{MR4365080}, \cite{MR4543152}, \cite{MR4472055}, \cite{MR4536467}, \cite{MR4588722}, \cite{MR4667419}, \cite{MR4806290}, \cite{MR4940720}, \cite{MR4961767}, \cite{MR5049498}, \cite{LUO_NTALAMPEKOS_2026}.

Interestingly the above mating description for $\sigma_2$ does not extend to $\sigma_c$ for $c>2$. One key distinction is that while $\sigma_c$ is hyperbolic for $c>2$ (uniformly expanding on a neighborhood of its Julia set), the map $\sigma_2$ has neutral fixed points on its Julia set. This naturally leads to the following question:

\vspace{2mm}

\noindent \emph{Is there a conformal mating description for Schwarz reflections in the hyperbolic setting, and if so, how do the hyperbolic and non-hyperbolic descriptions relate? }
 
 \vspace{2mm}
 

In this paper we demonstrate a conformal mating description in the hyperbolic setting. The key fact is that the escaping dynamics of hyperbolic Schwarz reflections admit an independent characterization by what we call \emph{Annular Schwarz Reflections}; we give a precise definition below in Definition \ref{sigmafdefn}. Like Nielsen maps, annular Schwarz reflections admit a concrete, finite-dimensional description, and they play the same role in the hyperbolic setting that Nielsen maps play in non-hyperbolic setting such as in \cite{MR4273178}, \cite{MR4706575}, \cite{luo2024generaldynamicaltheoryschwarz}, \cite{MR4961627}. In both hyperbolic and the above non-hyperbolic settings the non-escaping dynamics are described by polynomials.


Our results (stated precisely in Section \ref{main_resul_sec}) show that every hyperbolic Schwarz reflection determines two unique factors: a hyperbolic polynomial describing non-escaping dynamics, and an annular Schwarz reflection describing escaping dynamics (Theorem \ref{mating_thm}).  Conversely, any two such factors give rise to a unique hyperbolic Schwarz reflection (Theorem \ref{mating_thm2}). Lastly, we show that these annular Schwarz reflections $\sigma_f$ converge in a strong sense to the Nielsen maps $\rho$ as hyperbolicity degenerates (for instance as $c\rightarrow2$ in Example \ref{first_example}).

\begin{figure}[H]
    \centering
    \begin{subfigure}[b]{0.49\textwidth}
        \centering
        \includegraphics[width=\linewidth]
        {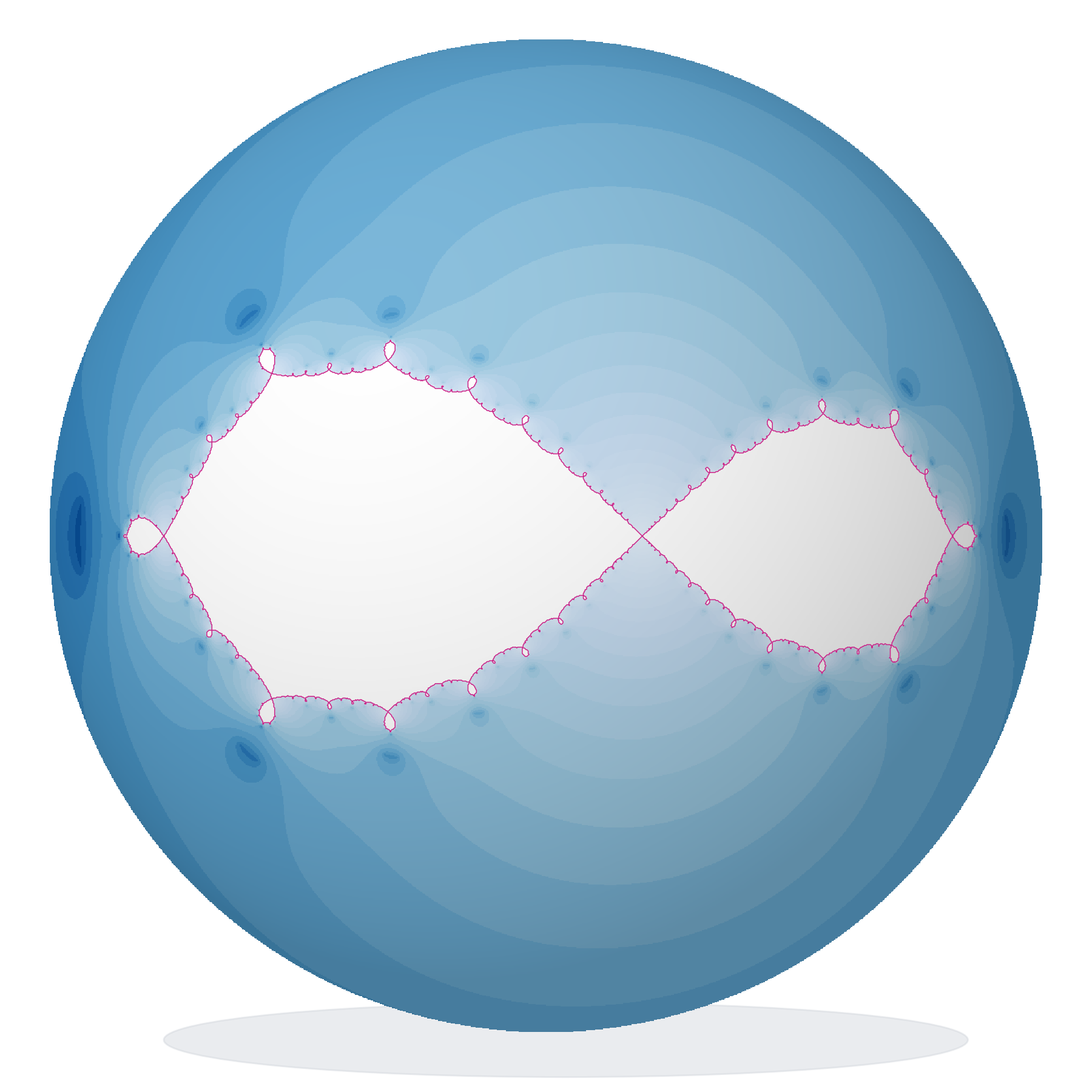}
        \caption{Front hemisphere.}
    \end{subfigure}
    \hfill
    \begin{subfigure}[b]{0.49\textwidth}
        \centering
        \includegraphics[width=\linewidth]
        {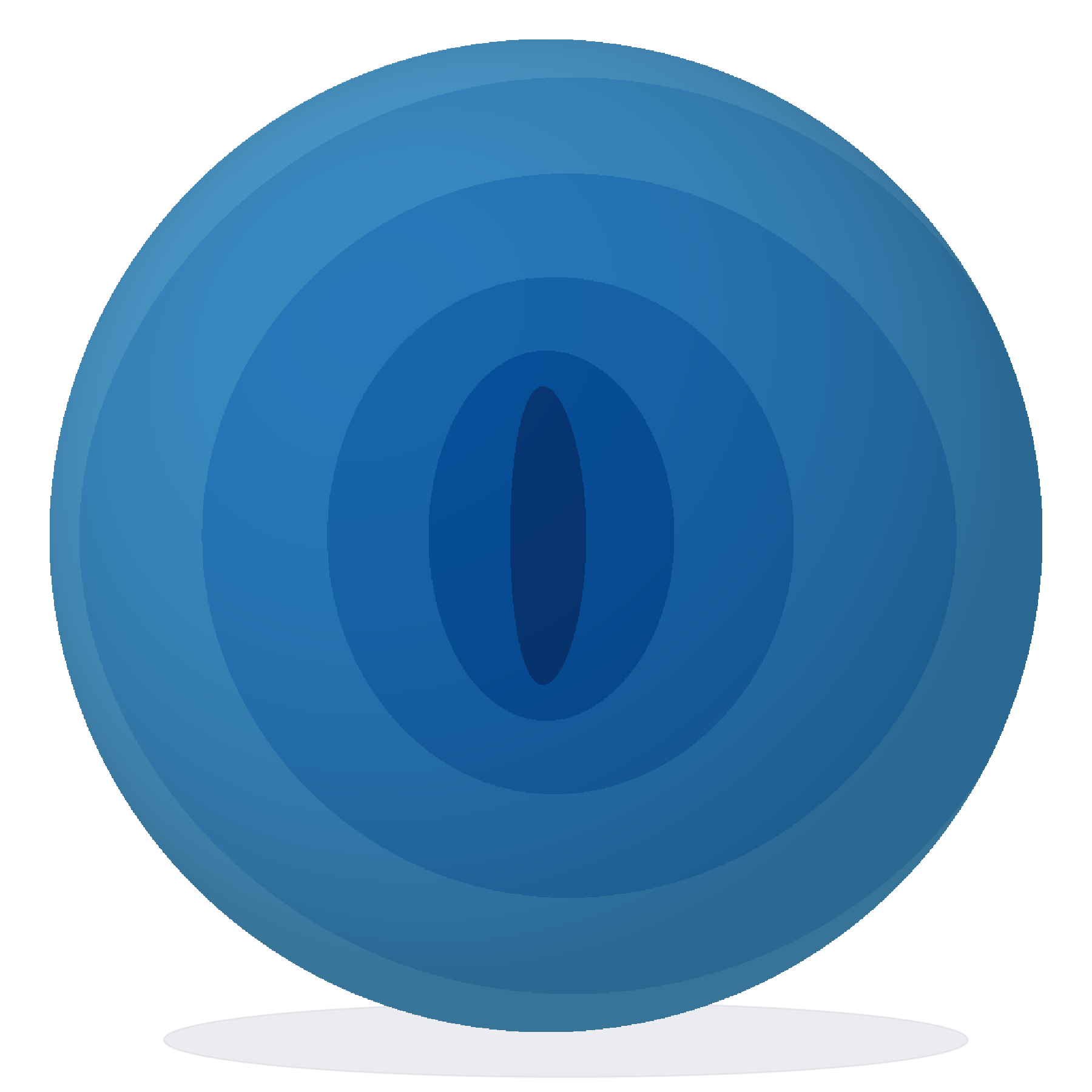}
        \caption{Opposite hemisphere.}
    \end{subfigure}    
    \captionsetup{width=\textwidth}
    \caption{Illustrated is the dynamical plane (illustrated on either hemisphere of $\Chat$) of a hyperbolic $\sigma=\sigma_r\in\Sigma_d$ with $r$ univalent in a neighborhood of $\mathbb{D}^*$ and so that $\sigma$ has an attracting $2$-cycle in $K(\sigma)$. Here the filled Julia set is in white and the escaping set is in blue. The fundamental tile $T^0(\sigma)$ is shown in dark blue on the opposite hemisphere.}
    \label{fig:schwarz-reflection-sphere}
\end{figure}

\subsection{Definitions and Notation.}\label{defnsec} In this Section \ref{defnsec} we will introduce some notation and definitions we will use throughout this paper before stating our results precisely in the next Section \ref{main_resul_sec}. 


We let $d\geq3$ and denote by $\rat_d$ the collection of degree $d$ rational maps $r: \Chat\rightarrow\Chat$.


\begin{notation} Let $r\in\rat_d$, and assume $r$ is univalent (injective) in a closed Euclidean disc $D$. Let $\rho: \Chat\rightarrow\Chat$ denote the reflection map in $\partial D$ (the unique anti-conformal self-map of $\Chat$ fixing $\partial D$ pointwise). The Schwarz Reflection for the quadrature domain $r(D)$ is given by 
\begin{equation}\label{deltoidsystem}  \sigma:= r\circ\rho\circ r^{-1} \textrm{ in } r(D), \end{equation}
where the unique $D$-valued branch $r^{-1}: r(D) \rightarrow D$ is chosen in (\ref{deltoidsystem}).
\end{notation}

\begin{rem} Note that the definition (\ref{deltoidsystem}) of $\sigma$ depends both on $r$ and a choice of the disc $D$; we use any of the notations $\sigma$, $\sigma_r$, $\sigma_{r, D}$ depending on which dependence we wish to emphasize.
\end{rem}

\begin{rem} The map $r\mapsto \sigma_r$ is not injective, as $\sigma_{r\circ M}=\sigma_{r}$ for any M\"obius transformation $M$ preserving $D$ (set-wise). 
\end{rem}

\noindent We now define a decomposition of the dynamical plane of $\sigma$ following Example \ref{first_example}.

\begin{definition}\label{tilingsetjuliasetdefn} We define the \emph{Tile} of $\sigma$ as $T^0(\sigma):=\closure(\Chat\setminus r(D))$, and the \emph{Tiling Set} by
\begin{equation} T(\sigma):= \{ z\in \Chat: \sigma^n(z) \in T^0(\sigma) \textrm{ for some } n\geq0 \}.
\end{equation}
We define the \emph{Filled Julia Set} of $\sigma$ by
\begin{equation} K(\sigma):= \Chat\setminus T(\sigma) = \{z \in \Chat : \sigma^n(z)\not\in T^0(\sigma) \textrm{ for all } n \geq0\},
\end{equation}
and the \emph{Julia set} of $\sigma$ by $\mathcal{J}(\sigma):= \partial K(\sigma)$.
\end{definition}

\begin{rem} That the terminology \emph{Filled Julia Set} and \emph{Julia Set} is appropriate will be justified in the following sections. We also remark that Definition \ref{tilingsetjuliasetdefn} is an instance of a broader definition in which one decomposes an open dynamical system into an escaping set (tiling set) and non-escaping set (filled Julia set), although our definition differs slightly as we include $\partial T^0(\sigma)$ in $T(\sigma)$ despite all forward-orbits being defined on $\partial T^0(\sigma)$. We also mention that in the literature the points $\{r(z) : r'(z)=0 \textrm{ and } z\in\partial D\}$ are removed from the tile by definition (and assigned to the Filled Julia set), but this will not be relevant to us as we will be primarily interested in the following class of $r$ without any critical points on $\partial D$.
\end{rem}

\begin{notation}\label{sigma_rdefn} We denote by $\Sigma_d$ the collection of $\sigma_{r, D}$ satisfying:
\begin{enumerate} \item $r\in\rat_d$ is univalent in a neighborhood of $D$, 
\item $K(\sigma)$ is connected and full (meaning $\Chat\setminus K(\sigma)$ is connected), and
\item The critical values of $\sigma$ contained in $K(\sigma)$ are attracted to attracting cycles. 
\end{enumerate}
\end{notation}

\begin{rem} We will see in Section \ref{dynam_plane_sec} that Condition (3) of Definition \ref{sigma_rdefn} ensures that $\sigma$ is hyperbolic: namely that $\sigma$ is uniformly expanding on a neighborhood of $\mathcal{J}(\sigma)$. The class $\Sigma_d$ is the main focus of this paper; we call this the class of \emph{hyperbolic Schwarz reflections}, although \emph{hyperbolic Schwarz reflections with connected and full filled Julia set} is a lengthier but more accurate terminology.
\end{rem}

As discussed in the Introduction, our first main result is that each $\sigma\in\Sigma_d$ is the conformal mating of a hyperbolic polynomial with an object we call an \emph{annular Schwarz reflection} which we will now describe (see Figure \ref{fig:second}).


\begin{notation} For $0<s<1$, we let $A_s:=\{z: s<|z|<1\}$. We denote by $\mathcal{F}_d$ the collection of maps $f$ satisfying:
\begin{enumerate} \item $f: A_s\mapsto\mathbb{D}$ is holomorphic and proper for some $0<s<1$, 
\item $\deg(f|_\mathbb{T})=1$, $\deg(f|_{s\mathbb{T}})=d-1$, and 
\item $f$ is univalent in a neighborhood of $\overline{A_{\sqrt{s}}}$.
\end{enumerate}
\end{notation}

\begin{rem} The space $\mathcal{F}_d$ is finite-dimensional and indeed is parametrized by the zeros of $f\in\mathcal{F}_d$ up to post-composition of $f$ with a rotation. By way of analogy, the proper maps $\mathbb{D}\mapsto\mathbb{D}$ are the Blaschke products and are also parametrized by zero-sets up to post-composition with a rotation. Maps in $\mathcal{F}_d$ even admit quite explicit formulas as certain infinite products of Blaschke factors and their quotients (see Theorem 1.3 of \cite{MR3716945}). We discuss more aspects of the family $\mathcal{F}_d$ in Section \ref{propofF}.
\end{rem}

\begin{definition}\label{sigmafdefn} Let $\rho_s(z):=s/\overline{z}$ be reflection in $\sqrt{s}\mathbb{T}$. For each $f\in\mathcal{F}_d$, we define the \emph{annular Schwarz reflection} $\sigma_f$ by 
\begin{equation}\label{fsystem} \sigma_f(z):= f\circ \rho_s \circ f^{-1}(z) \textrm{ for } z\in f\left(\overline{A_{\sqrt{s}}}\right), \end{equation}
where the unique $A_{\sqrt{s}}$-valued branch $f^{-1}: f\left(A_{\sqrt{s}}\right)\rightarrow A_{\sqrt{s}}$ is chosen in (\ref{fsystem}). We denote $\Sigma_d^\mathcal{F}:=\left\{ \sigma_f: f \in\mathcal{F}_d \textrm{ and } \sigma_f(1)=1 \right\}$ and $T^0(\sigma_f):=\mathbb{D}\setminus f\left(A_{\sqrt{s}}\right)$.
\end{definition}

\begin{rem} The normalization $\sigma_f(1)=1$ can always be obtained by post-composing $f$ with a rotation.
\end{rem}

\begin{rem} The definitions of annular Schwarz reflections $\sigma_f\in\Sigma_d^\mathcal{F}$ and Schwarz reflections $\sigma\in\Sigma_d$ are evidently similar; one key distinction is that $\sigma_f$ is $\overline{\mathbb{D}}$-valued whereas $\sigma$ is $\Chat$-valued. Another key distinction is that, as we will prove in Theorem \ref{tilingset=disc}, the natural decomposition of the dynamical plane of any annular Schwarz reflection is always trivial (see Figure \ref{fig:second}): the tiling set is always $=\mathbb{D}$, and so its complement (in $\overline{\mathbb{D}}$) is always $=\mathbb{T}$. 
\end{rem}

Our first main result is that each $\sigma\in\Sigma_d$ is a \emph{conformal mating} of a hyperbolic $p$ with $\sigma_f\in\Sigma_d^\mathcal{F}$, as in Definition \ref{conf_mat_defn} below. Roughly the definition requires that $\sigma\in\Sigma_d$ be simultaneously conjugate on $K(\sigma)$ to a hyperbolic polynomial $p: K(p) \rightarrow K(p)$ (Condition (A)) while also conjugate on $T(\sigma)$ to $\sigma_f: \mathbb{D}\setminus T^0(\sigma_f)\rightarrow\mathbb{D}$ (Condition (B)), and that these two conjugacies ``match up'' dynamically along $\mathcal{J}(\sigma)$ (Condition (C)). This is made precise in the following Definition \ref{conf_mat_defn} but is comparatively technical and can be skipped on first reading; we refer to \cite{MR3088260} for a broader discussion of conformal mating. Our Definition closely follows Definition 2.42 of \cite{MR4381220} and Section 10.2.1 of \cite{MR4961627}. 



\begin{notation} We denote by $\pold$ the collection of monic polynomials of degree $d$ in one complex variable $z$, and by $\overline{\pold}:=\{\overline{p(z)}: p\in\pold\}.$ We denote by $\overline{\textrm{Pol}_\textrm{d}^\textrm{HC}}$ the hyperbolic $p\in\overline{\textrm{Pol}_\textrm{d}}$ with connected filled Julia set.
\end{notation}

\begin{rem}\label{mathcalErem} The maps $\overline{z}^{d-1}: \mathbb{T}\rightarrow \mathbb{T}$ and $\sigma_f: \mathbb{T}\rightarrow\mathbb{T}$ are always conjugate for any $\sigma_f\in\Sigma_d^\mathcal{F}$, and we denote by $\mathcal{E}_f$ the unique orientation-preserving homeomorphism $\mathcal{E}_f: \mathbb{T}\rightarrow\mathbb{T}$ so that
\[ \mathcal{E}_f\circ\sigma_f=\overline{\mathcal{E}_f}^{d-1} \textrm{ on } \mathbb{T}, \textrm{ and } \mathcal{E}_f(1)=1. \] 
We denote the B\"ottcher coordinate for $p\in\overline{\textrm{Pol}_\textrm{d}^\textrm{HC}}$ by $\beta: \mathbb{D}^* \rightarrow \Chat\setminus K(p)$, and we normalize $\beta$ so that $\beta'(\infty)>0$.
\end{rem}

\begin{definition}\label{conf_mat_defn} Let $\sigma\in\Sigma_d$, $\sigma_f\in\Sigma_d^\mathcal{F}$, and  $p\in\overline{\poldmh}$.
 We say that $\sigma$ is a \emph{conformal mating} of $p$ and $\sigma_f$ if there exists a
 \begin{enumerate}
 \item quasiconformal homeomorphism $\psi_p$ of a neighborhood $U\supset K(\sigma)$ onto a neighborhood $V\supset K(p)$ with $\partial_{\overline{z}}\psi_p=0$ a.e. on $K(\sigma)$ and $\psi_p(\mathcal{J}(\sigma))=\mathcal{J}(p)$,
 \item a conformal map $\psi_f: T(\sigma) \rightarrow \mathbb{D}$ with $\psi_f(T^0(\sigma))=T^0(\sigma_f)$,
 \end{enumerate}
 so that
 \begin{enumerate}[label=(\Alph*)]
\item $\psi_p\circ \sigma = p\circ \psi_p$ on a neighborhood $W$ of $K(\sigma)$ where $W$, $\sigma(W)\subset U$, 
\item $\psi_f\circ \sigma = \sigma_f \circ \psi_f$ on $T(\sigma)\setminus T^0(\sigma)$,
\item $\psi_p\circ\psi_f^{-1}=\beta\circ\overline{\mathcal{E}_f}$ on $\mathbb{T}$. 
\end{enumerate}
\end{definition}


\vspace{2.5mm}

\subsection{Statement of Main Results}\label{main_resul_sec}

With the Definitions and Notation of Section \ref{defnsec} in hand, we may now state our main results more precisely. 

\begin{thmx}\label{mating_thm} For each $\sigma\in\Sigma_d$, there exist $p\in\overline{\poldmh}$ and $\sigma_f\in\Sigma_d^\mathcal{F}$ so that $\sigma$ is the conformal mating of $p$ with $\sigma_{f}$, and such $p$, $\sigma_f$ are unique up to M\"obius conjugacy.
\end{thmx}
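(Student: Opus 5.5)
We build the two factors separately, the polynomial by straightening and the annular reflection by uniformizing the escaping set, and then check that their boundary conjugacies match.

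\textbf{The polynomial factor $p$.} Since $r$ is univalent on a neighborhood of $D$, the quadrature domain $r(D)$ has real-analytic Jordan boundary. Hence $T^0(\sigma)$ is a closed Jordan disc with analytic boundary, and $\sigma$ is anti-meromorphic on a neighborhood of $\overline{r(D)}$. The map $\sigma$ sends $\partial r(D)$ onto itself. The preimage $\sigma^{-1}(T^0(\sigma))$ meets $r(D)$ in a set bounded by $\partial r(D)$ together with $\sigma^{-1}(\partial r(D))$. Because $K(\sigma)$ is connected and full, $\Omega:=\Chat\setminus\sigma^{-1}(T^0(\sigma))$ should be a Jordan domain compactly contained in $\Omega':=\Chat\setminus T^0(\sigma)=r(D)$, after a slight thickening to make the closure strictly inside. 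Then $\sigma:\Omega\to\Omega'$ is a proper anti-holomorphic map, i.e.\ an anti-polynomial-like map.

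We count its degree. The map $r$ has degree $d$, and $r^{-1}(r(D))$ consists of $D$ together with a region of degree $d-1$ over $r(D)$. Therefore each point of $\Omega'$ has $d-1$ $\sigma$-preimages in $r(D)$ outside $T^0(\sigma)$. So $\sigma|_\Omega$ has degree $d-1$, consistent with $p\in\overline{\poldmh}$.

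Apply the (anti-holomorphic) Douady--Hubbard straightening theorem. It gives a quasiconformal $\psi_p$ with $\bar\partial\psi_p=0$ a.e.\ on $K(\sigma)$, conjugating $\sigma$ to some $p\in\overline{\textrm{Pol}_{d-1}}$ near $K(\sigma)$. This $p$ has connected filled Julia set, and it is hyperbolic by Condition (3), since the conjugacy transports attracting cycles and critical orbits. This gives (A) and the first item of Definition \ref{conf_mat_defn}. Uniqueness of $p$ up to affine conjugacy is the standard uniqueness of straightening: the hybrid class of a connected-Julia-set polynomial-like map contains a unique polynomial up to affine conjugacy.

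\textbf{The annular factor $\sigma_f$.} The set $T(\sigma)$ is open, connected and simply connected, because $K(\sigma)$ is connected and full. Let $\psi:T(\sigma)\to\mathbb{D}$ be a Riemann map. Transport the dynamics to $\tau:=\psi\circ\sigma\circ\psi^{-1}$, defined on $\mathbb{D}\setminus\psi(\mathrm{int}\,T^0(\sigma))$. The set $\psi(\Chat\setminus T^0(\sigma))\cap\mathbb{D}$ is a topological annulus $A$ with outer boundary $\mathbb{T}$ and analytic inner boundary $\gamma=\psi(\partial T^0(\sigma))$.

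Uniformize $A$: let $g:A_{\sqrt s}\to A$ be conformal, with $\sqrt{s}\mathbb{T}\mapsto\gamma$. Since $\tau$ fixes $\gamma$ pointwise and is anti-conformal, $g^{-1}\circ\tau\circ g$ is reflection $\rho_s$ near $\sqrt s\mathbb{T}$. Schwarz reflection then extends $g$ across $\sqrt s\mathbb{T}$ to a map $f$ on $A_s$ by $f:=\tau\circ g\circ\rho_s$.

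We check that $f\in\mathcal{F}_d$:
\begin{itemize}
\item[(i)] $f:A_s\to\mathbb{D}$ is proper. Points near $s\mathbb{T}$ map under $\rho_s$ near $\mathbb{T}$, then by $g$ near $\partial T(\sigma)$, then by $\tau$ again near $\mathbb{T}$.
\item[(ii)] $\deg(f|_{\mathbb{T}})=1$ because $g$ is conformal. $\deg(f|_{s\mathbb{T}})=d-1$ because $\sigma$ has degree $d-1$ near $\mathcal{J}(\sigma)$.
\item[(iii)] $f$ is univalent near $\overline{A_{\sqrt s}}$. Here we use that $r$ is univalent on a neighborhood of $D$, so $\gamma$ is analytic and $g$ extends across it.
\end{itemize}
By construction $\sigma_f=\tau$, so $\psi_f:=\psi$ gives (B), and the second item of Definition \ref{conf_mat_defn} holds. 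We normalize $\sigma_f(1)=1$ by a rotation. Uniqueness follows since the Riemann map is unique up to automorphisms of $\mathbb{D}$, which act as conjugacy. Note that $\Sigma_d^\mathcal{F}$ only normalizes $\sigma_f(1)=1$, so it is determined up to Möbius conjugacy preserving that normalization.

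\textbf{Matching (C) --- the main obstacle.} We must show $\psi_p\circ\psi_f^{-1}=\beta\circ\overline{\mathcal{E}_f}$ on $\mathbb{T}$. This requires several ingredients:
\begin{itemize}
\item $\psi_f^{-1}$ must extend continuously to $\mathbb{T}$. This needs local connectivity of $\mathcal{J}(\sigma)$, which holds by hyperbolicity: $\mathcal{J}(\sigma)$ is quasiconformally homeomorphic to the locally connected $\mathcal{J}(p)$ via $\psi_p$.
\item $\beta$ must extend to $\mathbb{T}$, which again holds since $p$ is hyperbolic with connected Julia set.
\item The composition $h:=\beta^{-1}\circ\psi_p\circ\psi_f^{-1}$ needs to be a well-defined map $\mathbb{T}\to\mathbb{T}$. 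The issue is that $\beta$ need not be injective on $\mathbb{T}$.
\end{itemize}

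I would construct the matching by lifting instead of composing naively. Both $\psi_p\circ\psi_f^{-1}$ and $\beta\circ\overline{\mathcal{E}_f}$ semiconjugate $\sigma_f|_\mathbb{T}$ to $p|_{\mathcal{J}(p)}$ (using Remark \ref{mathcalErem}). Consider the external rays of $p$ and the "rays" $\psi_f^{-1}$(radii) in $T(\sigma)$, pushed forward by $\psi_p$. Landing points are determined by the dynamics of $\sigma_f$ on $\mathbb{T}$, which is expanding and conjugate to $\bar z^{d-1}$. So one shows that the landing map $\mathbb{T}\to\mathcal{J}(p)$ is a semiconjugacy from $\bar z^{d-1}$ coinciding with $\beta$.

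The main step is that such a semiconjugacy is unique once it is fixed at one point. Uniqueness follows from the expansion of $\bar z^{d-1}$ together with the requirement that the map is orientation-compatible and respects cyclic order. The base point is fixed by $\mathcal{E}_f(1)=1$ and a suitable choice of which fixed point of $p$ corresponds to $1$; that choice is absorbed by the affine-conjugacy freedom in $p$. I expect this combinatorial matching — showing the two semiconjugacies agree rather than differing by a deck-type rotation — to be the delicate part.
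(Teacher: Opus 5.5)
\textbf{Condition (C) is not proved.} Your construction of $\sigma_f$ and your uniqueness arguments essentially match the paper's (Theorem \ref{exist_of_f}, plus Douady--Hubbard rigidity for $p$). But the lemma you plan to use for (C) is false as stated. A semiconjugacy from $\overline{z}^{d-1}|_{\mathbb{T}}$ to $p|_{\mathcal{J}(p)}$ is not determined by one value: $z\mapsto\beta(z^m)$ is such a semiconjugacy for every $m\geq1$, and all of them send $1$ to $\beta(1)$. ``Respecting cyclic order'' only helps once you know that $\psi_p\circ\psi_f^{-1}|_{\mathbb{T}}$ factors as $\beta\circ h$ with $h$ a circle homeomorphism, and that is the entire content of (C).

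The missing idea is to compose in the interior, where $\beta^{-1}$ is single-valued, so you never have to invert $\beta$ on $\mathbb{T}$. Take a collar $N\subset\mathbb{D}$ of $\mathbb{T}$ with $\psi_f^{-1}(N)\subset U$, and set $H:=\beta^{-1}\circ\psi_p\circ\psi_f^{-1}$ on $N$. This is well defined because $\psi_p(U\setminus K(\sigma))\subset\Chat\setminus K(p)$. It is a quasiconformal embedding $N\to\mathbb{D}^*$, proper towards $\mathbb{T}$. By (A), (B) and $\beta(\overline{z}^{d-1})=p(\beta(z))$, it satisfies $H(\sigma_f(z))=\overline{H(z)}^{d-1}$ near $\mathbb{T}$.

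Hence $H$ extends to an orientation-reversing homeomorphism $h$ of $\mathbb{T}$ conjugating $\sigma_f$ to $\overline{z}^{d-1}$. The orientation-preserving centralizer of $\overline{z}^{d-1}$ on $\mathbb{T}$ is the group of rotations by $d$-th roots of unity, so $h=\omega\overline{\mathcal{E}_f}$ with $\omega^d=1$. You remove $\omega$ by passing to one of the $d$ monic rotation-conjugates of $p$, or by rotating $\psi_f$. Taking boundary values in $\psi_p\circ\psi_f^{-1}=\beta\circ H$ gives (C), and also gives the continuous extension of $\psi_f^{-1}$. This is the paper's argument; there $\psi_p$ is a global quasiconformal map, so $H$ is defined on all of $\mathbb{D}$.

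\textbf{The anti-polynomial-like restriction need not exist as you set it up.} Besides the annular component containing $\partial T^0(\sigma)$, the set $\sigma^{-1}(T^0(\sigma))$ may have simply connected components. Proposition \ref{dc_exist} and Construction \ref{sigmaqrconstr} explicitly allow for them. Each such component is a hole of $\Omega=r(D)\setminus\sigma^{-1}(T^0(\sigma))$. Connectedness and fullness of $K(\sigma)$ do not exclude this. A Riemann--Hurwitz count shows that the number of holes equals the number of critical points of $\sigma$, with multiplicity, lying in tiles $\sigma^{-n}(T^0(\sigma))$ with $n\geq2$.

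So $\Omega$ need not be a disc, and Douady--Hubbard straightening does not apply to $\sigma:\Omega\to r(D)$. Thickening does not fix this, since $\overline{\Omega}\subset r(D)$ already holds. You need a different pair of domains. One option is $V=\Chat\setminus\psi^{-1}(t\overline{\mathbb{D}})$ with $t\mathbb{D}\supset T^0(\sigma_f)\cup\CV(\sigma_f)$ and $\sigma_f(t\mathbb{T})\subset t\mathbb{D}$. This requires an expansion or Schwarz-lemma statement for $\sigma_f$ near $\mathbb{T}$, as in Construction \ref{section5const}. Alternatively, follow the paper: glue an attracting fixed point into $T^0(\sigma)$ to get a degree $d-1$ rational map with a simply connected, completely invariant basin, then replace the basin dynamics by $\overline{z}^{d-1}$ via a standard surgery.
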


\noindent Conversely, we have the following.

\begin{thmx}\label{mating_thm2} For any pair $p\in\overline{\poldmh}$, $\sigma_f\in\Sigma_d^\mathcal{F}$, there exists $\sigma\in\Sigma_d$ so that $\sigma$ is the conformal mating of $p$, $\sigma_f$, and moreover such $\sigma$ is unique up to M\"obius conjugacy.
\end{thmx}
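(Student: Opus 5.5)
The plan is quasiconformal surgery: glue the escaping dynamics of $\sigma_f$ to the non-escaping dynamics of $p$, straighten, and then prove uniqueness by rigidity.

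\textbf{Model map.} Since $p\in\overline{\poldmh}$ is hyperbolic with connected filled Julia set, $\mathcal{J}(p)$ is a quasicircle-like continuum and its B\"ottcher map $\beta:\mathbb{D}^*\to\Chat\setminus K(p)$ conjugates $\overline{z}^{d-1}$ to $p$. I would first take an equipotential $\Gamma_p:=\beta(R\mathbb{T})$ with $R>1$ close to $1$. On the annular side, $\sigma_f$ is expanding near $\mathbb{T}$ (as the paper will show), and $\mathcal{E}_f$ conjugates $\sigma_f|_{\mathbb{T}}$ to $\overline{z}^{d-1}$. I would extend $\mathcal{E}_f$ to a quasiconformal conjugacy $\widetilde{\mathcal{E}}$ from a collar $\{r<|z|<1\}\subset\mathbb{D}$ to the collar $\{1<|z|<R\}$ (after inversion $z\mapsto 1/\overline{z}$, so that the orientations match). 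This extension would be built by pulling back along the fundamental annuli: both maps are proper, of degree $d-1$, and covering on such collars, so I would choose a qc interpolation on a fundamental annulus and lift it by the dynamics.

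Next I would define a topological sphere $S$ by gluing $K(p)\cup\{$collar of $p\}$ to $\mathbb{D}$ along $\beta\circ\overline{\mathcal{E}_f}$ on $\mathbb{T}$, using the collar identification, and a map $F$ on $S$ equal to $p$ on the polynomial side and to $\sigma_f$ on the $\mathbb{D}$ side. Concretely, I would work on $\Chat$ with the map $G$ equal to $p$ inside $\Gamma_p$, and equal to $\Phi\circ\sigma_f\circ\Phi^{-1}$ on the outside, where $\Phi$ is a qc homeomorphism from $\mathbb{D}$ onto $\Chat\setminus K(p)$ agreeing with $\beta\circ\widetilde{\mathcal{E}}$ near $\mathbb{T}$. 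The map $G$ is then an anti-holomorphic-type quasiregular map of degree $d$ on its domain, defined off $\Phi(T^0(\sigma_f))$.

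\textbf{Invariant structure and straightening.} I would define a $G$-invariant Beltrami coefficient $\mu$ as follows: $\mu=0$ on $K(p)$; on the outside, $\mu$ is the pullback of the standard structure by $\Phi^{-1}$, which is $\sigma_f$-invariant since $\sigma_f$ is anti-conformal. The only non-conformal region is the interpolation collar. Each orbit passes through that collar at most a bounded number of times, because a point enters the collar from the $\mathbb{D}$-side, and on the $p$-side it goes directly to $K(p)$. Hence $\|\mu\|_\infty<1$. I would straighten by the measurable Riemann mapping theorem to obtain $\phi$, and set $\sigma:=\phi\circ G\circ\phi^{-1}$, which is anti-meromorphic on $\phi\circ\Phi(\mathbb{D}\setminus T^0)$ union the interior.

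To show $\sigma\in\Sigma_d$, I must show that $\sigma$ is the Schwarz reflection of a quadrature domain $r(D)$. I would argue that the complement of $\phi\Phi(T^0(\sigma_f))$ is a Jordan domain $\Omega$, and that $\sigma$ fixes $\partial\Omega$ pointwise and is anti-meromorphic on $\Omega$ with a continuous extension. By the definition, $\Omega$ is then a quadrature domain. Univalence of $f$ near $\overline{A_{\sqrt s}}$ should give that $\Omega$ is simply connected with real-analytic boundary, hence $\Omega=r(D)$ with $r$ rational by the standard characterization of simply connected quadrature domains. The degree count is $d=1+(d-1)$: there are $d-1$ preimages in $K$ and one from the reflection. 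Hyperbolicity and the fact that $K$ is connected and full follow by conjugacy. The maps $\psi_p=(\phi)^{-1}$ near $K$ and $\psi_f=\Phi^{-1}\phi^{-1}$ on $T(\sigma)$ satisfy (A)--(C), and $\psi_f$ is conformal because $\mu$ was pulled back from the standard structure there.

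\textbf{Uniqueness.} Given two matings $\sigma,\sigma'$, I would compose $\psi_f'^{-1}\psi_f$ on $T$ with $\psi_p'^{-1}\psi_p$ near $K$. These agree on $\mathcal{J}$ by (C), so they glue to a homeomorphism of $\Chat$ that is conformal off $\mathcal{J}(\sigma)$ and qc near $\mathcal{J}$. Since $\mathcal{J}(\sigma)$ is the Julia set of a hyperbolic map, it has measure zero, and the map is therefore M\"obius.

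\textbf{Main obstacle.} The main obstacle is proving that the straightened $\sigma$ genuinely arises from a rational $r$ univalent on a neighborhood of a disc, in particular that the boundary of $\Omega$ is a real-analytic Jordan curve with the correct reflection. I would first try to show this by pulling $\rho_s$ across $\sqrt{s}\mathbb{T}$ via $f$, and then via $\phi\Phi$, to get analyticity of the fixed curve.
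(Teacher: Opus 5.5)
Your existence construction is a genuinely different route from the paper's, and it works in outline.

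\textbf{How the two routes differ.} The paper never builds a conjugacy before straightening. It replaces $\sigma_f$ near $\mathbb{T}$ by a non-dynamical quasiregular interpolation $h$ to $\overline{z}^{d-1}$ in the annulus between $r\mathbb{T}$ and $\sqrt[d-1]{r}\mathbb{T}$. It transports this by the B\"ottcher coordinate, spreads the distortion of $h$ by pulling back under $\overline{z}^{d-1}$, and straightens. Only afterwards does it recover the conjugacy $\sigma|_{T(\sigma)}\sim\sigma_f$, by lifting the conformal conjugacy on $\phi\circ\psi^{-1}(r\mathbb{D})$ (where $\mu=0$) under the coverings $\sigma,\sigma_f$.

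You do the lifting first, obtaining a qc conjugacy $\widetilde{\mathcal{E}}$ on a collar. Then $G=\Phi\circ\sigma_f\circ\Phi^{-1}$ on all of $\Chat\setminus K(p)$, $\mu=(\Phi^{-1})^*\mu_0$ there, and $\phi\circ\Phi$ is conformal on $\mathbb{D}$. So (B) and (C) come for free, with $\psi_p\circ\psi_f^{-1}=\Phi=\beta\circ\overline{\mathcal{E}_f}$ on $\mathbb{T}$. The price is constructing $\widetilde{\mathcal{E}}$; the paper only needs an interpolation.

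\textbf{Small corrections.}
\begin{itemize}
\item The inversion must be $z\mapsto 1/z$, not $1/\overline{z}$. With $1/\overline{z}$, $\Phi$ (hence $\psi_f$) is orientation-reversing, and its boundary values are $\beta\circ\mathcal{E}_f$ rather than $\beta\circ\overline{\mathcal{E}_f}$.
\item The bound $\|\mu\|_\infty<1$ is just the dilatation bound of $\Phi$, so the orbit-counting remark is unnecessary.
\item Your ``main obstacle'' is not one. As in the paper, take a Riemann map $r:\mathbb{D}^*\to\Omega$ and set $r(z):=\sigma(r(1/\overline{z}))$ on $\mathbb{D}$. Since $\sigma$ is continuous up to the Jordan curve $\partial\Omega$ and is the identity there, the two definitions glue across $\mathbb{T}$ to a rational map. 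No a priori analyticity of $\partial\Omega$ is needed.
\end{itemize}

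\textbf{Gap 1: you need $T(\sigma_f)=\mathbb{D}$.} You never invoke Theorem \ref{tilingset=disc}, yet everything after straightening depends on it. You need $T(\sigma)=\phi\Phi(\mathbb{D})$ and $K(\sigma)=\phi(K(p))$, both for $\psi_f$ to map $T(\sigma)$ onto $\mathbb{D}$ and for ``$K(\sigma)$ connected and full by conjugacy.'' Expansion of $\sigma_f$ near $\mathbb{T}$ does not give this. For example, it does not exclude an attracting cycle of $\sigma_f$ in $\mathbb{D}\setminus T^0(\sigma_f)$, which would add components to $K(\sigma)$. The paper proves it by a separate surgery, producing a Blaschke product with an attracting fixed point in $\mathbb{D}$ that attracts all of $\mathbb{D}$.

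\textbf{Gap 2: the uniqueness step.} ``Measure zero'' is not what makes the glued map $M$ M\"obius. A homeomorphism conformal off a null set need not be M\"obius. Your assertion that $M$ is qc near $\mathcal{J}(\sigma)$ is exactly what needs proof: gluing maps that are qc on either side of a non-quasicircle interface does not give a qc map in general. You can supply it in either of two ways:
\begin{itemize}
\item Use conformal removability of $\mathcal{J}(\sigma)$, as the qc image of the hyperbolic Julia set $\mathcal{J}(p)$. This is the paper's route.
\item Use Rickman's lemma. On $K(\sigma)$, $M$ agrees with $(\psi_p')^{-1}\circ\psi_p$, which by Definition \ref{conf_mat_defn}(1) is qc on a full neighborhood of $K(\sigma)$. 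Off $K(\sigma)$, $M$ is conformal. Hence $M$ is qc on $\Chat$ with $\partial_{\overline{z}}M=0$ a.e., and so it is M\"obius.
\end{itemize}
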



As already discussed, there already exists an analogous mating description for many classes of non-hyperbolic Schwarz reflections $\sigma:= r\circ\rho\circ r^{-1}$ having neutral fixed points on $\mathcal{J}(\sigma)$.  Our other main result, Theorem \ref{reconciliation_thm}, is meant to consolidate the descriptions in these two settings. We first recall the setup from \cite{MR4706575}, \cite{MR4961627}. The following definition of a \emph{Nielsen map} is illustrated in Figure \ref{fig:second1} for $d=3$.



\begin{definition}\label{necklace} Suppose $C_1$, ..., $C_d\subset\mathbb{C}$ are Euclidean circles, and denote by $\interior(C_i)$ the bounded component of $\mathbb{C}\setminus C_i$. Assume that:
\begin{enumerate} \item Each $C_i$ is tangent to $C_{i+1}$ (with $i+1$ taken mod $d$),
\item Each circle $C_i$ intersects $\mathbb{T}$ at two points and at right angles, and
\item  $\interior(C_i)\cap\interior(C_j)=\emptyset$ for each $i\not=j$.
\end{enumerate}
Let $r_i$ denote reflection in $C_i$, and $T:=\mathbb{D}\setminus\cup_{i=1}^d (C_i\cup\interior(C_i))$. We define the \emph{Nielsen map} $\rho: \overline{\mathbb{D}}\setminus T \rightarrow \overline{\mathbb{D}} $ by $\rho(z) = r_i(z)$ for $z\in C_i\cup\interior(C_i)$. We denote $T^0(\rho):=T$.
\end{definition}

In various settings, the works \cite{MR4706575}, \cite{MR4381220} and \cite{MR4961627} studied the description of non-hyperbolic $\sigma_r$ as conformal matings of hyperbolic polynomials with Nielsen maps $\rho$; we refer to these works for precise statements. By perturbing $r$ we may always obtain a hyperbolic Schwarz reflection; how does the mating change under this perturbation? The answer is given in the following Theorem and illustrated in Figures \ref{fig:second1}, \ref{fig:second}.


\begin{thmx}\label{reconciliation_thm} Assume $\sigma_r$ is the mating of a hyperbolic polynomial and a Nielsen map $\rho$, with $\interior T^0(\sigma_r)$ connected. Let $(r_n)_{n=1}^\infty\subset\ratd$ converge uniformly on $\Chat$ to $r$, and assume $\sigma_n:=\sigma_{r_n}\in\Sigma_d$ for all $n$. Then $T(\sigma_{n})\rightarrow T(\sigma)$ in the Carath\'eodory sense, and each $\sigma_n$ is the mating of a hyperbolic polynomial with $\sigma_{f_n}\in \Sigma_d^\mathcal{F}$ where $\sigma_{f_n}\rightarrow\rho$ uniformly on compact subsets of $\mathbb{D}\setminus T^0(\rho)$.
\end{thmx}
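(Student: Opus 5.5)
Since $\sigma_n\in\Sigma_d$, Theorem \ref{mating_thm} already gives, for each $n$, a hyperbolic polynomial $p_n$, an annular reflection $\sigma_{f_n}\in\Sigma_d^\mathcal{F}$ and a conformal map $\psi_n: T(\sigma_n)\to\mathbb{D}$ with $\psi_n(T^0(\sigma_n))=T^0(\sigma_{f_n})$ and $\psi_n\circ\sigma_n=\sigma_{f_n}\circ\psi_n$. What remains is the two convergence statements. The plan is to prove Carath\'eodory convergence of the tiling sets first, with a base point $w\in\interior T^0(\sigma_r)$. We then normalize $\psi_n(w_n)=0$ and $\psi_n'(w_n)>0$, where $w_n\to w$ lies in $\interior T^0(\sigma_n)$. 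This normalization is compatible with $\sigma_{f_n}(1)=1$ after a rotation of the target. Finally we show that $\psi_n$ converges to the conformal map $\psi: T(\sigma_r)\to\mathbb{D}$ that conjugates $\sigma_r$ to $\rho$, normalized in the same way. The conjugacy relation then gives $\sigma_{f_n}=\psi_n\circ\sigma_n\circ\psi_n^{-1}\to \psi\circ\sigma_r\circ\psi^{-1}=\rho$ locally uniformly on $\mathbb{D}\setminus T^0(\rho)$, because $\sigma_n\to\sigma_r$ locally uniformly on compact subsets of $r(D)$ (the branches $r_n^{-1}\to r^{-1}$ converge there).

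\textbf{Carath\'eodory convergence.} We need two inclusions.

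(i) Every compact set $E\subset T(\sigma_r)$ lies in $T(\sigma_n)$ for large $n$. Cover $E$ by finitely many pieces of the form $\sigma_r^{-k}(\text{compact subset of } \interior T^0(\sigma_r)\cup\partial\text{-collar})$. Since $T^0(\sigma_n)\to T^0(\sigma_r)$ in the Hausdorff sense and $\sigma_n\to\sigma_r$ uniformly off the singular points, any point landing in the tile within $k$ steps for $\sigma_r$ does so for $\sigma_n$ when $n$ is large. The only care needed is near the cusps and double points of $\partial T^0(\sigma_r)$. Because $T(\sigma_r)$ is open, a compact $E$ stays a definite distance from $K(\sigma_r)$, and hence from the cusps, which lie in $\mathcal{J}(\sigma_r)$.

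(ii) Every limit of components of the complements is contained in $K(\sigma_r)$; equivalently, the component of the kernel containing $w$ is exactly $T(\sigma_r)$. Suppose $z\in K(\sigma_r)$ is a limit of points $z_n$ that lie in a uniform disc $B(z,\epsilon)\subset T(\sigma_n)$. This contradicts the persistence of $K$: the filled Julia sets $K(\sigma_n)$ contain quasiconformal images of pieces of $K(p)$, and by the standing hypothesis $\sigma_r$ is the mating with a hyperbolic polynomial whose attracting basins persist. Where this fails, at points of $\mathcal{J}(\sigma_r)$ that are iterated preimages of the neutral points, we use that preimages of the tile are dense near $\mathcal{J}(\sigma_r)$ while $K(\sigma_r)$ has empty interior near those points; the kernel statement is then about interior, so the conclusion follows.

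\textbf{Convergence of $\psi_n$.} By Carath\'eodory's kernel theorem, $\psi_n^{-1}\to\psi_\infty^{-1}$ locally uniformly, where $\psi_\infty: T(\sigma_r)\to\mathbb{D}$ is the normalized Riemann map. Passing to the limit in $\psi_n\circ\sigma_n=\sigma_{f_n}\circ\psi_n$ shows that $\sigma_{f_n}$ converges locally uniformly on the open sets $\psi_\infty(T(\sigma_r)\setminus T^0(\sigma_r))$ to $g:=\psi_\infty\circ\sigma_r\circ\psi_\infty^{-1}$. We must then identify $g$ with $\rho$. By the mating hypothesis, $\psi$ is a Riemann map of $T(\sigma_r)$ conjugating $\sigma_r$ to $\rho$. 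Since $\psi_\infty=M\circ\psi$ with $M\in\mathrm{Aut}(\mathbb{D})$, we get $g=M\rho M^{-1}$, a Nielsen map for a conjugated necklace, and we may take this as $\rho$ (or fix the normalization of $\psi$ accordingly). It remains to check that $\psi_\infty(T^0(\sigma_r))$ coincides with the limit of the sets $T^0(\sigma_{f_n})$, which follows from (i) and (ii) applied to the tiles.

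\textbf{Main obstacle.} I expect the hardest step to be inclusion (ii), which excludes any excess of $\lim T(\sigma_n)$ near the neutral or parabolic points. There the hyperbolic maps $\sigma_n$ could open "gates" through which escaping regions implode into $K(\sigma_r)$. I would first try uniform control of parabolic implosion in a neighborhood of the cusps and double points, for example via the local normal form of $\sigma_r$ at a cusp (a parabolic reflection). This would show that the points of $T(\sigma_n)$ entering such a neighborhood have hyperbolic-metric size in $T(\sigma_n)$ tending to $0$, which is enough for the kernel argument even if the sets themselves do not converge in the Hausdorff sense.
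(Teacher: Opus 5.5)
Your overall route is the paper's. You show that the set $E$ of points having a neighborhood contained in $T(\sigma_n)$ for all large $n$ equals $T(\sigma_r)$, apply the kernel theorem to the normalized Riemann maps, and pass to the limit in $\sigma_{f_n}=\psi_n\circ\sigma_n\circ\psi_n^{-1}$, identifying the limit with $\rho$ up to a disc automorphism. Your inclusion (i) is essentially the paper's argument; your ``$\partial$-collar'' plays the role of $\interior(T^0\cup T^1)$.

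The gap is in inclusion (ii) at points $z\in\mathcal{J}(\sigma_r)$. You say the persistence argument fails at iterated preimages of the cusps, and you patch this by asserting that $K(\sigma_r)$ has empty interior near such points. That assertion is false. $\sigma_r|_{K(\sigma_r)}$ is topologically conjugate to a hyperbolic polynomial $p$ with connected $K(p)$, and this conjugacy carries interior to interior. Moreover, $\mathcal{J}(p)$ is the boundary of the basin of each of its bounded attracting cycles. So every point of $\mathcal{J}(\sigma_r)$, cusps and their preimages included, is a limit of points of $\interior K(\sigma_r)$. For instance, $K(\sigma_2)$ for the deltoid reflection is a closed Jordan domain whose boundary passes through the three cusps. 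The patch also points the wrong way: if $K(\sigma_r)$ really had no interior near $z$, you would have nothing to prevent a whole neighborhood of $z$ from lying in $T(\sigma_n)$. Your last paragraph then defers the step to an unexecuted control of parabolic implosion, so as written (ii) is not established.

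The missing observation is that the idea you started (ii) with already works at every point of $K(\sigma_r)$, and no implosion analysis is needed. Given $z\in\mathcal{J}(\sigma_r)$ and any neighborhood $U$ of $z$, choose $w\in U\cap\interior K(\sigma_r)$.
\begin{itemize}
\item The orbit of $w$ lies in $r(\interior D)$ and converges to an attracting cycle of $\sigma_r$.
\item That cycle persists for $\sigma_n$ with a common trapping neighborhood.
\item The finite orbit segment of $w$ before it enters this neighborhood is close to that of $\sigma_r$, because $r_n^{-1}\to r^{-1}$ on compact subsets of $r(\interior D)$.
\end{itemize}
Hence $w\in K(\sigma_n)$ for all large $n$, so $U\not\subset T(\sigma_n)$, and therefore $E\cap K(\sigma_r)=\emptyset$. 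This is exactly how the paper proves this inclusion.

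Implosion can let $T(\sigma_n)$ invade $K(\sigma_r)$ along thin channels and spoil Hausdorff convergence of the Julia sets. Carath\'eodory convergence, however, only asks whether an entire fixed neighborhood eventually lies in $T(\sigma_n)$, and the persistent basin points rule that out. With (ii) repaired this way, the rest of your proposal goes through as in the paper.
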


The rest of the paper is organized as follows. Section \ref{dynam_plane_sec} develops some basic properties of the dynamical plane of $\sigma\in\Sigma_d$. Section \ref{rtobf} is devoted to the proof of Theorem \ref{mating_thm}, and after proving some needed results about the family $\mathcal{F}$ and $\Sigma_d^\mathcal{F}$ in Section \ref{propofF}, we prove Theorem \ref{mating_thm2} in Section \ref{bftor}. In Section \ref{degen_sec} we prove Theorem \ref{reconciliation_thm}. The proofs of Theorems, \ref{mating_thm}  \ref{mating_thm2} use standard quasiconformal surgeries, and the proof of Theorem \ref{reconciliation_thm} uses Carath\'eodory's kernel convergence theorem.

\vspace{5mm}

\noindent\textbf{Acknowledgements.} The main results and proofs of this manuscript were developed by the author without the use of AI. The author used AI to referee an earlier draft of this manuscript which led to various improvements. The figures were also generated with the use of AI.

\section{The dynamical plane of $\sigma$}\label{dynam_plane_sec} In this Section we establish some basic properties of the (open) dynamical systems $\sigma\in\Sigma_d$. 

\begin{notation} Let $\sigma\in\Sigma_d$. We denote $T^0:=T^0(\sigma)$ and let $A_n:=\sigma^{-n}(T^0)$ for $n\geq0$.
\end{notation}

\begin{prop}\label{dc_exist} There is a doubly-connected component of $A_1$ whose boundary contains $\partial A_0$, and if there are any other components of $A_1$ they must be simply-connected.
\end{prop}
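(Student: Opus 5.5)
The plan is to pull the whole question back to the disc $D$ via $r$ and $\rho$, where the topology of $A_1$ becomes the topology of a preimage under $r$. Since $\sigma=r\circ\rho\circ r^{-1}$ on $r(D)$ and $r$ is univalent on a neighborhood of $D$, a point $z=r(w)$ with $w\in D$ lies in $A_1$ if and only if $r(\rho(w))\in T^0$. That is, $\rho(w)$ must lie in
\[E:=r^{-1}(T^0)\cap(\Chat\setminus\interior D).\]
Hence $A_1=r\circ\rho(E)$. Here $\rho$ is an anticonformal homeomorphism of $\Chat\setminus\interior D$ onto $D$, and $r$ is univalent on $D$. So $r\circ\rho$ restricted to $\Chat\setminus\interior D$ is a homeomorphism onto its image, and components of $A_1$ correspond bijectively and homeomorphically to components of $E$. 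It therefore suffices to prove the statement for $E$, with $\partial D$ playing the role of $\partial A_0$; note $r\circ\rho$ fixes $\partial A_0=r(\partial D)$ pointwise.

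Next I would describe $E$ concretely. We have $\Chat\setminus r^{-1}(T^0)=r^{-1}(r(\interior D))$, which is $\interior D$ together with the remaining components $W_1,\dots,W_k$ of the preimage of the disc $r(\interior D)$. These exist because $\deg r=d\geq 3$, and $\overline{W_j}\cap D=\emptyset$ because $r$ is injective on a neighborhood of $D$. Thus
\[E=(\Chat\setminus\interior D)\setminus\bigcup_j W_j.\]
Let $E_0$ be the component of $E$ containing $\partial D$; it contains a collar outside $\partial D$. Two facts then need checking. First, one complementary component of $E_0$ is exactly $\interior D$ (this uses $\overline{W_j}\cap D=\emptyset$). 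Second, every complementary component of a component of $E$ contains $\interior D$ or some $W_j$. Thus $E_0$ is doubly connected once we show its complement has exactly one component besides $\interior D$, and a component $C\neq E_0$ is simply connected once we show its complement is connected.

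The main obstacle is excluding extra complementary components; this is where Condition (2) of Notation \ref{sigma_rdefn} enters. Push everything forward by $r\circ\rho$. Suppose a component of $A_1$ has a complementary component $U$ not containing $T^0$. Then $U\subset r(D)$, $\partial U\subset A_1$, and $U$ contains the image of some $W_j$. On that image $\sigma$ is $r\circ\rho\circ r^{-1}$ followed by $r|_{W_j}$, so $\sigma$ maps it properly onto $r(\interior D)=\Chat\setminus T^0$. This set contains $K(\sigma)$, which is nonempty because, for instance, it contains the attracting cycles of Condition (3), so $U$ meets $K(\sigma)$.

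Now suppose there are two such components $U\neq U'$, or a single such $U$ when the component containing $T^0$ also meets $K(\sigma)$. Then $K(\sigma)$ meets two different complementary components of a subset of $A_1\subset T(\sigma)$, and that subset is disjoint from $K(\sigma)$. This separates $K(\sigma)$, contradicting connectedness. For $E_0$, the complement of $A_0\cup A_1$-component on the far side from $T^0$ is then a single region. For $C\neq E_0$, the complementary component containing $T^0$ also contains $E_0$ and hence $\sigma$-preimages of $K(\sigma)$ beyond $E_0$. So any additional component $U$ again splits $K(\sigma)$, contradicting connectedness.

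Where this bookkeeping is delicate, namely in showing that the $T^0$-side component genuinely contains points of $K(\sigma)$, I would first try using fullness, i.e.\ that $T(\sigma)=\Chat\setminus K(\sigma)$ is connected. A fallback is to track which $W_j$ lie on which side of $E_0$ using that $\deg(r|_{W_j})$ sums to $d-1$.
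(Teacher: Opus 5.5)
Your strategy is the paper's: show that any surplus complementary component of a component of $A_1$ contains points of $K(\sigma)$, and would therefore separate $K(\sigma)$. Pulling back to $E=r^{-1}(T^0)\setminus\interior D$ makes it cleaner to see why a complementary component not containing $T^0$ meets $K(\sigma)$: it contains some $r\circ\rho(W_j)$, which $\sigma$ maps onto $r(\interior D)\supset K(\sigma)$. Your treatment of the doubly connected component is correct. One small inaccuracy: Condition (3) does not by itself produce attracting cycles, since it is vacuous if no critical value lies in $K(\sigma)$. Nonemptiness of $K(\sigma)$ should come, as in the paper, from nested compact sets. For example, take $V_n:=\{z:\sigma^k(z)\in r(\interior D),\ 0\le k\le n\}$; these are nonempty, satisfy $\overline{V_{n+1}}\subset V_n$ (because $\overline{W_j}\cap D=\emptyset$), and intersect to $K(\sigma)$.

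The genuine gap is exactly the step you flag. Take a component $C\neq A_1^{\textrm{DC}}$ and let $V$ be its complementary component containing $T^0$. You must show $V$ meets $K(\sigma)$, and ``it contains $E_0$, hence preimages of $K(\sigma)$ beyond $E_0$'' does not show this. A priori $V$ could consist of $\interior T^0$, $A_1^{\textrm{DC}}$ and a collar of tiling-set points, with all of $K(\sigma)$ on the far side of an annular $C$. Your second fact gives nothing here, because in $E$-coordinates the corresponding component $V'$ already contains $\interior D$. Fullness is also not the right tool, since that configuration is compatible with $T(\sigma)$ being connected.

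What works is to apply your elementary fact to the compact set $E\cup\interior D=\Chat\setminus\bigcup_j W_j$ instead of to $E$. The component $C'=(r\circ\rho)^{-1}(C)$ of $E$ is still a component of this larger set. Indeed, if the component $Y\supset C'$ met $\interior D$, then $Y\supset\overline{D}$. Then $Y\setminus\interior D$ would still be connected (as $\partial D\subset Y$), would lie in $E$, and would contain both $C'$ and $\partial D\subset E_0$, contradicting $C'\neq E_0$. Hence every complementary component of $C'$ contains some $W_j$, including the one $V'$ containing $\interior D$. So $V\supset r\circ\rho(W_j)$ meets $K(\sigma)$, and your separation argument then finishes the proof. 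The paper's own proof covers this case only with ``similarly'', so you were right to be suspicious of it.
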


\begin{proof} Since $\sigma$ fixes point-wise $\partial A_0$ and is anti-holomorphic, it follows that points in a one-sided neighborhood of $\partial A_0$ are mapped by $\sigma$ into $A_0$. Thus there is a component $A$ of $A_1$ with $\partial A_0\subset \partial A$ and since $A\not=\Chat$, $A$ must have at least two complementary components. If $A$ were to have a third complementary component, this would contradict the assumption (see Notation \ref{sigma_rdefn}) that $K(\sigma)$ is connected as the two complementary components $\not=A_0$ would both contain points in $K(\sigma)$ (obtained as the intersection of a sequence of nested compact sets). Similarly, any other component $A$ of $A_1$ must be simply connected, since if there were two components in $\Chat\setminus A$, these two components would disconnect $K(\sigma)$. 
\end{proof}

\begin{prop}\label{dc_exist2} The map $\sigma: A_1\rightarrow A_0$ is a proper branched cover of degree $d$, and for $n\geq2$, $\sigma: A_n \rightarrow A_{n-1}$ is a proper branched cover of degree $d-1$. 
\end{prop}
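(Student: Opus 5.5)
We compute degrees by counting preimages. The starting point is that $\sigma = r\circ\rho\circ r^{-1}$ on $r(D)$, with $r$ univalent on a neighborhood of $D$. Note that $A_0 = T^0 = \Chat\setminus r(\interior D)$, and $\sigma$ is defined on the closed set $r(D)$ (extending continuously to $\partial A_0$ as the identity).

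\textbf{Degree of $\sigma: A_1\to A_0$.} A point $z\in r(D)$ lies in $A_1$ exactly when $\sigma(z)\in A_0$. Writing $z=r(u)$ with $u\in D$, we have $\sigma(z) = r(\rho(u))$, and $\rho(u)$ ranges over $\Chat\setminus \interior D$. Fix $w\in \interior A_0$, generic (not a critical value of $r$). Then $r^{-1}(w)$ consists of $d$ distinct points, and none of them lies in $D$, since $r(D)\cap \interior A_0=\emptyset$. Hence all $d$ preimages lie in $\Chat\setminus D$, and reflecting gives $d$ points $\rho(r^{-1}(w))\subset \interior D$. Their images under $r$ are exactly $\sigma^{-1}(w)$; they are distinct because $r$ is injective on $D$. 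So $\sigma^{-1}(w)$ has exactly $d$ points. This says $\sigma = r\circ\rho\circ (r|_D)^{-1}$ restricted to $A_1$ is conjugate via $r|_D$ and $\rho$ to the restriction $r: r^{-1}(A_0)\setminus \interior D \to A_0$. That restriction is the full preimage of $A_0$ under the degree $d$ map $r$, minus the part on $\partial D$, which maps to $\partial A_0$. Properness then follows because $r$ is proper on $r^{-1}(A_0)$ and $\rho$ and $r|_D$ are homeomorphisms. Boundary points $\partial A_0$ need a little care: there $r^{-1}(\partial A_0)$ contains the circle $\partial D$, which maps to itself under $\rho$. Counting with multiplicity still gives $d$, since $r$ has no critical points on $\partial D$.

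\textbf{Degree of $\sigma: A_n\to A_{n-1}$ for $n\ge 2$.} For such $n$, $A_{n-1}\subset r(D)$ up to the boundary $\partial A_0$. More precisely, $A_{n-1}$ is disjoint from $\interior A_0$, because a point of $\interior A_0$ has no forward orbit. For $w\in A_{n-1}\setminus \partial A_0$, we have $w\in r(\interior D)$. The equation $\sigma(z)=w$ with $z=r(u)$, $u\in D$, becomes $r(\rho u)=w$. Now $r^{-1}(w)$ has $d$ points counted with multiplicity, exactly one of which, namely $(r|_D)^{-1}(w)$, lies in $D$. The remaining $d-1$ lie in $\Chat\setminus D$, and their reflections lie in $\interior D$, producing $d-1$ preimages. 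Moreover $\sigma^{-1}(A_{n-1}) = A_n$ by definition. Hence $\sigma: A_n\to A_{n-1}$ is the conjugate (by the same homeomorphisms) of $r: r^{-1}(A_{n-1})\setminus D\to A_{n-1}$. This map is proper because $r^{-1}(A_{n-1})\cap D$ is a closed-open piece of the full preimage, which follows from univalence of $r$ on a neighborhood of $D$. Its degree is $d-1$.

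\textbf{Main obstacle.} The one delicate step is the separation claim: that $r^{-1}(A_{n-1})$ splits as a disjoint union of the sheet inside $D$ and the rest. We need this to deduce properness, and to rule out preimages drifting onto $\partial D$. For $n\ge2$ the issue is points of $A_{n-1}$ on $\partial A_0$, which lie in $A_0$ and so only arise in the $n=1$ count. We handle this by noting that $\sigma$ fixes $\partial A_0$ pointwise, so $\partial A_0$ is accounted for in $A_1$. Univalence of $r$ in a neighborhood of $D$ then guarantees that the extra $d-1$ preimages stay a definite distance from $\partial D$ over compact subsets.
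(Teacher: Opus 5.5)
Your proof is correct and is essentially the paper's argument: conjugating by the homeomorphism $r\circ\rho$ identifies $\sigma|_{A_n}$ with $r$ restricted to the $r$-preimages of $A_{n-1}$ lying outside $D$ (outside $\interior D$ when $n=1$). One then counts $d$ such preimages over $A_0$ and $d-1$ over points of $r(\interior D)$. Your extra care at $\partial A_0$ goes beyond the paper, which writes $r^{-1}(A_0)\subset\Chat\setminus D$ and glosses over these boundary points: you get the clopen splitting of $r^{-1}(A_{n-1})$ from univalence near $D$, and you exclude the pointwise-fixed curve $\partial A_0$ from $A_n$ for $n\ge 2$ (a convention, since literally $\partial A_0\subset\sigma^{-n}(T^0)$ for every $n$).
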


\begin{proof} Recall that $\sigma:=r\circ\rho\circ r^{-1}$ in $r(D)$, and $r^{-1}$, $\rho$ are both $1:1$ in $r(D)$, $D$ (respectively). Since $r: r^{-1}(A_0) \rightarrow A_0$ is a proper degree $d$ branched cover with $r^{-1}(A_0)\subset \Chat\setminus D$, it follows that $\sigma: A_1\rightarrow A_0$ is also a proper degree $d$ branched cover.

 Similarly the second statement follows since each point in $A_n$ for $n>1$ has $d-1$ preimages under $r$ in $\Chat\setminus D$. 
\end{proof}


We will now take $\sigma$ (which we recall is not defined in $A_0$) and ``glue in'' an attracting fixed point into $A_0$ to produce a rational function. The construction, illustrated in Figure \ref{fig:sigmaqrconstr}, is that of straightening a rational-like mapping to produce a genuine rational map which is hybrid equivalent to the given rational-like mapping. 

\begin{figure}[htbp]
    \centering
    \includegraphics[width=\textwidth]{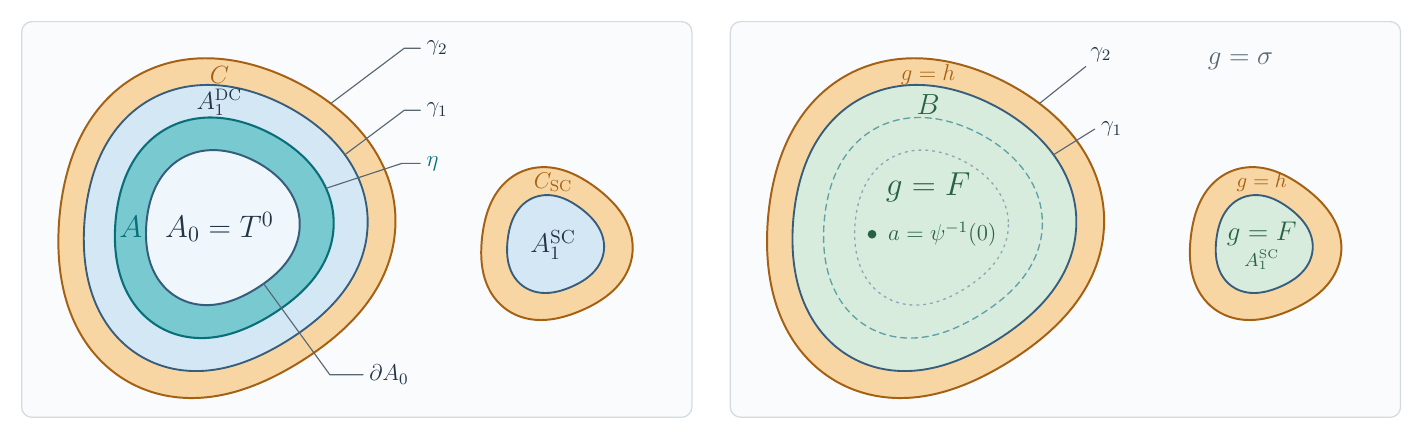}
    \caption{Illustrated is the notation of Construction \ref{sigmaqrconstr}.}
    \label{fig:sigmaqrconstr}
\end{figure}


\begin{construction}\label{sigmaqrconstr} Let $\sigma\in\Sigma_d$. Assume for now that $\CV(\sigma)\cap\partial A_0=\emptyset$ (see Remark \ref{cvassumpti} below). Let $A$ be a topological annulus with:
\begin{enumerate}
\item $A\subset A_1$, 
\item $\partial A_0\subset \partial A$, and 
\item $A\cap\CV(\sigma)=\emptyset$. 
\end{enumerate}
Denote by $\eta$ the boundary component of $A$ with $\eta\not=\partial A_0$. Each component of $\sigma^{-1}(A)$ is a topological annulus one of whose boundary components coincides with a boundary component of $A_1$. Let $C$ denote the component of $\sigma^{-1}(A)$ sharing a boundary component with the doubly-connected component $A_1^{\textrm{DC}}$ of $A_1$ (see Proposition \ref{dc_exist}). Denote $\gamma_1:=\partial C\cap \partial A_1^{\textrm{DC}}$, and denote the other component of $\partial C$ by $\gamma_2$. Let $B:=A_1^{\textrm{DC}}\cup A_0$ so that, by Proposition \ref{dc_exist}, $B$ is simply-connected. There exists a conformal map $\psi: B \rightarrow (1/2)\mathbb{D}$. Let $\ell\in\{1, ..., d-1\}$ be the degree of $\sigma: \gamma_1\rightarrow\partial A_0$. Define 
\[ F(z):=\psi^{-1}\left(\frac{\overline{\psi(z)}^{\ell}}{2}\right) \textrm{ for } z\in B.\]
 We have that: 
\begin{enumerate}
\item $F: \gamma_1 \rightarrow \psi^{-1}(\mathbb{T}/2^{\ell+1}) \subset B$,
\item $\sigma: \gamma_2 \rightarrow \eta \subset \partial A$
\end{enumerate}
are covering maps of the same degree, and thus there exists a quasiregular interpolation $h$ in $C$ between the maps (1) and (2) (see for instance Section 2.3 of \cite{MR3445628}), in other words there is a degree $\ell$ covering map $h$ of $C$ onto the annulus bounded by $\eta$ and $\psi^{-1}(\mathbb{T}/2^{\ell+1})$ so that $h(z)=F(z)$ on $\gamma_1$ and $h(z)=\sigma(z)$ on $\gamma_2$.

Consider any other component $C_{\textrm{SC}}$ of $\sigma^{-1}(A)$, which necessarily shares a boundary component with a simply-connected component $A_1^{\textrm{SC}}$ of $A_1$ by Proposition \ref{dc_exist}. Denote the other boundary component of $C_{\textrm{SC}}$ by $\eta$. We consider a conformal map $\phi: A_{0}\rightarrow \psi^{-1}((1/2)^{\ell+1}\mathbb{D})$, set $F:=\phi\circ\sigma$ in $A_1^{\textrm{SC}}$, and denote by $h$ the annular interpolation in $C_{\textrm{SC}}$ between $F|_{\partial A_1^{\textrm{SC}}}$ and $\sigma|_\eta$.  We consider
\begin{equation}\label{defn_of_g1}
g:=\begin{cases} F \textrm{ in } A_1\cup A_0,  \\  h \textrm{ in } \sigma^{-1}(A),  \\ \sigma \textrm{ otherwise. } \end{cases}
\end{equation}
\end{construction}

\begin{rem}\label{cvassumpti} We assumed in Construction \ref{sigmaqrconstr} that $\CV(\sigma)\cap\partial A_0=\emptyset$. If $\CV(\sigma)\cap\partial A_0\not=\emptyset$, we choose instead $A$ so that the inner boundary does not contain any critical values of $\sigma$ and proceed as in Construction \ref{sigmaqrconstr}.
\end{rem}


\begin{prop}\label{gisqr1} The map $g: \Chat\rightarrow\Chat$ is quasiregular. 
\end{prop}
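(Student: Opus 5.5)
The plan is to verify that $g$ is continuous on $\Chat$ and that it is (anti-)quasiregular on each of finitely many open pieces whose common boundaries are analytic or quasi-analytic Jordan curves. Removability of such curves then gives quasiregularity on all of $\Chat$. Throughout, ``quasiregular'' is read as orientation-reversing quasiregular, since every piece of $g$ is anti-holomorphic or an orientation-reversing interpolation. Equivalently, one may show that $\overline{g}$ is quasiregular in the usual sense.

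\textbf{Step 1: the pieces.} I decompose $\Chat$ into the open sets
\[ B=A_1^{\textrm{DC}}\cup A_0,\qquad A_1^{\textrm{SC}} \textrm{ (each simply-connected component of } A_1),\qquad \sigma^{-1}(A), \]
together with the interior $\Omega$ of the remaining set. Since $A\subset A_1$, we have $\sigma^{-1}(A)\subset A_2$, which is disjoint from $A_0\cup A_1$, so the definition (\ref{defn_of_g1}) is consistent.
\begin{itemize}
\item On $B$, $F=\psi^{-1}\circ(\overline{\psi}^{\ell}/2)$ is a composition of conformal maps with $\overline{w}^\ell$, so it is anti-holomorphic. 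In particular $g$ is anti-holomorphic across $\partial A_0$, which lies in the interior of $B$ by Proposition \ref{dc_exist}.
\item On each $A_1^{\textrm{SC}}$, $F=\phi\circ\sigma$ is anti-holomorphic.
\item On $\Omega$, $g=\sigma$ is anti-holomorphic.
\item On each component of $\sigma^{-1}(A)$, $h$ is by construction a quasiregular interpolation, taken from Section 2.3 of \cite{MR3445628}. It extends continuously to the closed annulus and is $K$-quasiregular there for some $K$.
\end{itemize}
Since there are finitely many components (Proposition \ref{dc_exist2} bounds the number of components of $A_1$ and $\sigma^{-1}(A)$), we get a uniform dilatation bound.

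\textbf{Step 2: continuity across the seams.} The seams are the following curves.
\begin{itemize}
\item $\gamma_1=\partial C\cap\partial A_1^{\textrm{DC}}$, where $h=F$ by construction.
\item $\gamma_2$, separating $C$ from $\Omega$, where $h=\sigma$.
\item The analogous curves for each $C_{\textrm{SC}}$: there $h=F$ on $\partial A_1^{\textrm{SC}}$ and $h=\sigma$ on the other boundary curve.
\end{itemize}
Every boundary point of a piece lies on one of these curves, so the one-sided definitions agree and $g$ is continuous.

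\textbf{Step 3: removability.} I check that the seams are removable. The curves $\partial A_1^{\textrm{DC}}$, $\partial A_1^{\textrm{SC}}$ and $\gamma_1$ are components of $\sigma^{-1}(\partial A_0)$. Because $r$ is univalent on a neighborhood of $D$, $\partial A_0=r(\partial D)$ is an analytic Jordan curve, and $\sigma$ has no critical points on these curves after the choice of $A$ (using Remark \ref{cvassumpti} if needed). Hence these seams are analytic arcs. We choose $\eta$ analytic, so $\gamma_2=\sigma^{-1}(\eta)$ is analytic as well. Analytic curves have zero area and are locally quasiarcs, so a continuous map that is $K$-quasiregular off them is ACL with $W^{1,2}_{\rm loc}$ derivatives and the same dilatation bound a.e. Therefore $g$ is quasiregular on $\Chat$.

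The main obstacle is Step 2, specifically confirming that the pieces in (\ref{defn_of_g1}) exhaust $\Chat$ with no unaccounted boundary. One must check that a simply-connected $A_1^{\textrm{SC}}$ does not abut $A_0$ or $B$ along a curve where $F$ is defined by two different formulas. This holds because such a component is a different component of $A_1$ from $A_1^{\textrm{DC}}$ and is separated from $A_0$ by points of $K(\sigma)$ or $A_2$. A second point to check is that, when critical values lie on $\partial A_0$ (Remark \ref{cvassumpti}), the modified $A$ still has analytic boundary curves, so the removability argument of Step 3 still applies.
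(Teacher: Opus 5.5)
Your proposal is correct and follows essentially the same route as the paper. On each piece of (\ref{defn_of_g1}), $g$ is either anti-holomorphic or the quasiregular interpolation $h$; it is continuous across the seams because $h$ agrees with $F$ on $\gamma_1$ (and on $\partial A_1^{\textrm{SC}}$) and with $\sigma$ on $\gamma_2$ (and on the other boundary curve of each $C_{\textrm{SC}}$); and analytic arcs are removable, so $g$ is quasiregular on $\Chat$. You also write out bookkeeping the paper leaves implicit, namely that $F$ is a single anti-holomorphic formula across $\partial A_0$ and that the seams are analytic because they are $\sigma$-preimages of analytic curves avoiding critical values.
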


\begin{proof} The map $g$ is quasiregular in each of the regions in the piecewise definition, and continuous across the common boundaries since $h$ interpolates between $F$ and $\sigma$. Hence by removability of analytic arcs for quasiregular mappings, $g$ is quasiregular on $\Chat$. 
\end{proof}

\begin{definition}\label{gisqrsectbelt} We define a Beltrami coefficient $\mu$ in $\Chat$ as follows:
\begin{enumerate}
\item $\mu|_{A_1\cup A_0}:=0$,
\item $\mu|_{\sigma^{-1}(A)}:=h^*(\mu|_{A_1\cup A_0})=h^*(0)$ (the pull back of $\mu|_{A_1\cup A_0}$ under $h$ defines $\mu$ in $\sigma^{-1}(A)$),
\item $\mu|_{\sigma^{-n}(A)}:=\sigma^*(\mu|_{\sigma^{-(n-1)}(A)})$ for $n\geq2$ (the pull back of $\mu|_{\sigma^{-(n-1)}(A)}$ under $\sigma$ defines $\mu$ in $A_n$), and  
\item $\mu:=0$ elsewhere.
\end{enumerate}
\end{definition}

\begin{prop}\label{g_invariance} The Beltrami coefficient $\mu$ is $g$-invariant, and $||\mu||_{L^\infty}<1$. 
\end{prop}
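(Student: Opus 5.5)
The plan is the standard one for invariant Beltrami coefficients in a quasiconformal surgery: control the dilatation piece by piece, then check $g^*\mu=\mu$ region by region. The key structural fact is that $\mu$ is built from at most one non-conformal map, namely $h$, followed by conformal or anticonformal maps. Composing with (anti)conformal maps does not change the modulus of a Beltrami coefficient.

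\textbf{The bound $\|\mu\|_{L^\infty}<1$.} I would treat the regions of Definition \ref{gisqrsectbelt} in turn.

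On $A_1\cup A_0$ and on the complement of all the sets $\sigma^{-n}(A)$, we have $\mu=0$.

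On $\sigma^{-1}(A)$, $\mu=h^*(0)$. Its modulus equals the complex dilatation $k_h:=\|\partial_{\bar z}h/\partial_z h\|_\infty$ (or the conjugate version, since $h$ is orientation reversing like $\sigma$). Now $h$ is a quasiregular interpolation between covering maps of analytic curves on a compact annulus, so $k_h<1$.

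For $n\geq2$, $\mu|_{\sigma^{-n}(A)}$ is the pullback of $\mu|_{\sigma^{-1}(A)}$ under $\sigma^{n-1}$, which is anti-holomorphic. Pullback by an anti-holomorphic map sends $\nu$ to $\overline{\nu\circ\sigma}\,\cdot\,\overline{\sigma'}/\sigma'$, which preserves $|\nu|$ pointwise. Hence $|\mu|\leq k_h$ there as well.

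I should also note that the sets $\sigma^{-n}(A)$, $n\geq1$, are pairwise disjoint and disjoint from $A_1\cup A_0$, since $A\subset A_1$ and $\sigma^{-n}(A)\subset A_{n+1}$. Therefore $\mu$ is well defined and $\|\mu\|_\infty\le k_h<1$.

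\textbf{The invariance $g^*\mu=\mu$.} I would check this a.e. on each piece of (\ref{defn_of_g1}).

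On $A_1\cup A_0$, $g=F$ is (anti)conformal, and it maps into $A_1\cup A_0$: $F(B)\subset\psi^{-1}(\tfrac12\mathbb{D})\subset B$, and on $A_1^{\textrm{SC}}$ the map sends into $A_0$. So $g^*\mu=F^*0=0=\mu$.

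On $\sigma^{-1}(A)$, $g=h$ maps into $A\cup A_0\subset A_1\cup A_0$, where $\mu=0$. So $g^*\mu=h^*0=\mu$ by definition.

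On $\sigma^{-n}(A)$ for $n\ge2$, $g=\sigma$ maps into $\sigma^{-(n-1)}(A)$, and $g^*\mu=\mu$ holds by item (3) of Definition \ref{gisqrsectbelt}.

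On the remaining set, $g=\sigma$. I must check that $\sigma$ maps it into the set where $\mu=0$. Take $z$ outside $A_1\cup A_0\cup\bigcup_{n\ge1}\sigma^{-n}(A)$. If $\sigma(z)$ lay in $A_1\cup A_0$, then $z\in A_2\cup A_1$. We have excluded $A_1$, so $\sigma(z)\in A_1$. But $\sigma(z)\notin A$, since otherwise $z\in\sigma^{-1}(A)$. So $\sigma(z)\in A_1\setminus A$, and this is consistent with $\mu(\sigma(z))=0$, because on $A_1\setminus A$ the coefficient is $0$. If instead $\sigma(z)\in\sigma^{-m}(A)$, then $z\in\sigma^{-(m+1)}(A)$, which we excluded. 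Either way $\mu(\sigma(z))=0$, and since $\sigma$ is anti-holomorphic, $g^*\mu(z)=0=\mu(z)$.

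The boundaries between regions are analytic curves, hence of measure zero, so invariance holds a.e.

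\textbf{Main obstacle.} The real care is in the bookkeeping: confirming that the pieces of the piecewise definition are disjoint and that each image lands in the intended region. This includes the components of $\sigma^{-1}(A)$ adjacent to the simply connected components $A_1^{\textrm{SC}}$, where $h$ maps into $\overline{A}\cup\phi(A_0)$ and $\mu=0$. It also includes checking that the orientation convention for pulling back under the anti-holomorphic $\sigma$ preserves $|\mu|$.
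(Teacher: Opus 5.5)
Your proposal is correct and follows the paper's proof. The bound comes from the quasiregularity of the interpolation $h$, together with the fact that pulling back under the anti-holomorphic $\sigma$ preserves $|\mu|$; invariance holds by construction on $T(\sigma)$, and trivially on $K(\sigma)$, where $\mu=0$ and $g=\sigma$. You simply carry out the region-by-region bookkeeping that the paper leaves implicit.

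One small slip, which does not affect the argument: on $A_1^{\textrm{SC}}$ the map $F=\phi\circ\sigma$ lands in $\phi(A_0)=\psi^{-1}\bigl((1/2)^{\ell+1}\mathbb{D}\bigr)\subset B$, not in $A_0$. This is harmless, since $\mu=0$ on all of $A_1\cup A_0$.
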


\begin{proof} The Beltrami coefficient $\mu$ is $g$-invariant in $T(\sigma)$ by definition of $\sigma$ and (\ref{defn_of_g1}). In $\Chat\setminus T(\sigma)$ we have $\mu=0$ and $g=\sigma$ is anti-holomorphic so $\mu$ is $g$-invariant in all of $\Chat$. Moreover, since $h$ is quasiregular, we have $||\mu||_{L^\infty(A_2)}<1$, and so since $\sigma$ is anti-holomorphic, we conclude $||\mu||_{L^\infty(\Chat)}=||\mu||_{L^\infty(A_2)}<1$.
\end{proof}

\begin{notation}\label{phifirstinr} By existence of solutions to the Beltrami equation, there exists a quasiconformal mapping we denote $\phi: \Chat\rightarrow\Chat$ so that $\phi_{\overline{z}}=\mu\phi_z$, and we normalize $\phi$ so that $\phi\circ\psi^{-1}(0)=\infty$ and $\phi(\infty)=\psi^{-1}(0)$. We set 
\[   r:=  \phi \circ g \circ \phi^{-1}. \] 
\end{notation}

\begin{thm}\label{r_prop} The map $r:\Chat\rightarrow\Chat$ is a degree $d-1$ rational map, and $\infty$ is an attracting fixed point of $r$.
\end{thm}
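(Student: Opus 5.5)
The plan is to show that $r$ is anti-holomorphic of degree $d-1$ (so it is a rational map in the sense used for $\overline{\pold}$: the complex conjugate of a degree $d-1$ rational map), and that $\infty$ is an attracting fixed point, by checking four things in order.

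\textbf{Step 1: $r$ is anti-holomorphic.} By Proposition \ref{gisqr1}, $g$ is quasiregular. Since $g$ is built from $\sigma$ and the conjugation in $F$, it reverses orientation. By Notation \ref{phifirstinr}, $\phi$ is quasiconformal with $\phi^*(0)=\mu$, so $r=\phi\circ g\circ\phi^{-1}$ is an orientation-reversing quasiregular self-map of $\Chat$. By Proposition \ref{g_invariance}, $g^*\mu=\mu$, which gives
\[ r^*(0)=(\phi^{-1})^*g^*\phi^*(0)=(\phi^{-1})^*g^*\mu=(\phi^{-1})^*\mu=0 \]
almost everywhere. Hence $\overline{r}$ is a $1$-quasiregular map of $\Chat$. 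By Weyl's lemma it is holomorphic, hence rational, and $r$ is anti-rational.

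\textbf{Step 2: the degree.} Since $\phi$ is a homeomorphism, $\deg r=\deg g$. I will count $g$-preimages of a generic point $w\in\interior K(\sigma)$ that is not a critical value.
\begin{itemize}[label={}]
\item Points in $A_1\cup A_0$ are mapped by $F$ into $B$, and $B\subset T(\sigma)$.
\item Points in $\sigma^{-1}(A)$ are mapped by $h$ into the annuli bounded by $\eta$ and $\psi^{-1}(\mathbb{T}/2^{\ell+1})$, or by $\eta$ and $\partial A_1^{\textrm{SC}}$. All of these lie in $T(\sigma)$.
\item So every $g$-preimage of $w$ lies where $g=\sigma$, i.e. in $r(D)\setminus(A_1\cup\sigma^{-1}(A))$.
\end{itemize}
Write $\sigma=r_0\circ\rho\circ r_0^{-1}$, where $r_0\in\rat_d$ is the original rational map. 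The $\sigma$-preimages of $w$ correspond to points $\zeta\in r_0^{-1}(w)$ with $\zeta\notin D$, via $z=r_0(\rho(\zeta))$. Since $w\in r_0(D)$ and $r_0$ is univalent on $D$, exactly one of the $d$ points of $r_0^{-1}(w)$ lies in $D$. This leaves $d-1$ preimages, as in the proof of Proposition \ref{dc_exist2}. These preimages lie in $K(\sigma)$, where indeed $g=\sigma$. Hence $\deg g=d-1$.

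\textbf{Step 3: the attracting fixed point.} The point $\psi^{-1}(0)\in B$ is fixed by $F$, since $F(\psi^{-1}(0))=\psi^{-1}(\overline{0}^{\ell}/2)=\psi^{-1}(0)$. In the coordinate $\psi$, $F$ is $z\mapsto\overline z^{\ell}/2$. For $\ell\ge2$ the fixed point is superattracting; for $\ell=1$ the multiplier has modulus $1/2$. Near this point $g=F$, and $\mu=0$ on $A_1\cup A_0\supset B$. So $\phi$ is conformal near $\psi^{-1}(0)$, and conjugating by $\phi$ preserves the modulus of the multiplier. With the normalization $\phi(\psi^{-1}(0))=\infty$, the point $\infty$ is an attracting fixed point of $r$.

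\textbf{Main obstacle.} The delicate step is the preimage count in Step 2. It requires verifying that the images of the interpolation pieces $h$ (on $C$ and on each $C_{\textrm{SC}}$), and of $F$ on the simply-connected components $A_1^{\textrm{SC}}$, stay inside $T(\sigma)$, so that no extra preimages of $w$ arise. Equivalently, as a cross-check, one can count $g$-preimages of a point near $\psi^{-1}(0)$ and use the degree $\ell$ of $F$ on $B$ together with the other pieces; the two counts should agree at $d-1$.
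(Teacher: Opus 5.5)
Your proposal is correct and follows the paper's route: anti-holomorphicity from the $g$-invariance of $\mu$, the attracting fixed point from the definition of $F$ together with the normalization $\phi(\psi^{-1}(0))=\infty$, and a preimage count for the degree, which the paper only calls ``readily checked''. One small slip: on $C_{\textrm{SC}}$ the interpolation $h$ maps onto the annulus between $F(\partial A_1^{\textrm{SC}})=\psi^{-1}(\mathbb{T}/2^{\ell+1})$ and $\sigma(\eta)$, which is the outer boundary of $A$; it is not an annulus bounded by $\partial A_1^{\textrm{SC}}$, and it is this correct description that puts the image inside $B\subset T(\sigma)$ as your count requires (also, counting with multiplicity at any $w\in K(\sigma)$ removes the need to know $\interior K(\sigma)\neq\emptyset$).
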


\begin{proof} That $r$ is anti-holomorphic in $\Chat$ follows from $g$-invariance of $\mu$ and $\phi_{\overline{z}}=\mu\phi_z$. Hence $r$ is a rational map. That $\infty$ is attracting follows from the definition of $g$ and the normalization of $\phi$, and the degree is readily checked. 
\end{proof}

\begin{notation} Let $\mathcal{A}(\infty)$ denote the basin of attraction for $\infty$ for $r$.
\end{notation}

\begin{thm}\label{respectivecount} The rational map $r$ is hyperbolic with connected and full filled Julia set, and $\phi$ is a conjugacy between the filled Julia sets of $r$ and $\sigma$ with $\phi_{\overline{z}}=0$ a.e. on $K(\sigma)$. Moreover, $\phi(T(\sigma))=\mathcal{A}(\infty)$, and $\phi(\mathcal{J}(\sigma))=\mathcal{J}(r)$. 
\end{thm}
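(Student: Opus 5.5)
The proof rests on the construction of $g$ and $\phi$. By (\ref{defn_of_g1}), $g=\sigma$ off $T(\sigma)$, and $\mu=0$ there (Definition \ref{gisqrsectbelt}). So $\phi$ conjugates $\sigma|_{K(\sigma)}$ to $r|_{\phi(K(\sigma))}$ and is conformal a.e.\ on $K(\sigma)$.

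\textbf{Step 1: $\phi(T(\sigma))=\mathcal{A}(\infty)$.} First, every point of $T(\sigma)$ is attracted to $\infty$ under $r$. Under $g$, every point of $A_0\cup A_1$ lands in $B$ and then, under iteration of $F$, converges to $\psi^{-1}(0)$. For $z\in A_n$ with $n\ge 2$, the map $g$ agrees with $\sigma$ or with $h$ and sends $z$ into $A_{n-1}\cup\sigma^{-1}(A)$. So after finitely many steps the $g$-orbit of $z$ enters $A_1\cup A_0$. Here one must check that $h$ maps $\sigma^{-1}(A)$ into $A\cup A_0\cup A_1$, which holds by construction. Conjugating by $\phi$ and using the normalization $\phi(\psi^{-1}(0))=\infty$ gives $\phi(T(\sigma))\subset\mathcal{A}(\infty)$.

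Conversely, points of $K(\sigma)$ never enter $T^0$, and $K(\sigma)$ is closed and $\sigma$-invariant. Its image $\phi(K(\sigma))$ is therefore a closed, totally invariant set disjoint from $\infty$ whose orbits stay in a compact set avoiding a neighbourhood of $\infty$. Such orbits cannot converge to $\infty$, so $\phi(K(\sigma))\cap\mathcal{A}(\infty)=\emptyset$. Hence $\phi(T(\sigma))=\mathcal{A}(\infty)$ and $\phi(K(\sigma))=K(r)$, where $K(r)$ denotes the complement of $\mathcal{A}(\infty)$. Taking boundaries gives $\phi(\mathcal{J}(\sigma))=\partial\mathcal{A}(\infty)=\mathcal{J}(r)$. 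For the last equality we use that $\mathcal{A}(\infty)$ is connected and that its boundary is the Julia set, as for any attracting basin whose complement is connected. Since $K(\sigma)$ is connected and full by Notation \ref{sigma_rdefn}, so is $K(r)$. It follows that $\mathcal{A}(\infty)$ is connected and simply connected, and hence $\mathcal{J}(r)$ is connected.

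\textbf{Step 2: hyperbolicity.} Critical points of $g$ in $T(\sigma)$ have orbits in $T(\sigma)$, so the corresponding critical points of $r$ lie in $\mathcal{A}(\infty)$. Critical points of $r$ in $K(r)$ correspond, via $\phi$, to critical points of $\sigma$ in $K(\sigma)$ (there $g=\sigma$). By condition (3) of Notation \ref{sigma_rdefn}, their critical values converge to attracting cycles of $\sigma$. Since $\phi$ is a topological conjugacy there, these cycles go to attracting cycles of $r$; attractiveness is a topological property for rational maps, as neutral and repelling cycles are not topologically attracting. Every critical orbit of $r$ therefore converges to an attracting cycle, and $r$ is hyperbolic.

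\textbf{Main obstacle.} The delicate step is the inclusion $\phi(K(\sigma))\cap\mathcal{A}(\infty)=\emptyset$, together with the identification of $\partial\mathcal{A}(\infty)$ with the full Julia set. To handle this I would argue that $\phi(K(\sigma))$ is a forward-invariant compact set avoiding a neighbourhood of $\infty$, namely $\phi(B)$, which lies in $\phi(T(\sigma))$. By normality of iterates on it, any point of $\phi(K(\sigma))$ is excluded from the basin. The identification $\partial\mathcal{A}(\infty)=\mathcal{J}(r)$ then follows because the interior of $K(r)$ consists of Fatou components, while $\partial\mathcal{A}(\infty)\subset\mathcal{J}(r)$ always holds.
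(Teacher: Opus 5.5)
Your proposal is correct and follows essentially the same route as the paper. You prove $\phi(T(\sigma))\subset\mathcal{A}(\infty)$ and $\phi(K(\sigma))\cap\mathcal{A}(\infty)=\emptyset$ and conclude by the partition of the sphere; you get hyperbolicity by sorting the critical points of $r$ into $\mathcal{A}(\infty)$ and $\phi(K(\sigma))$; and you inherit connectedness and fullness of $K(r)$ from $K(\sigma)$. The differences are minor: you spell out the orbit-tracking that the paper calls ``readily verified,'' and you show that attracting cycles go to attracting cycles via topological attraction, whereas the paper uses that $\phi$ is conformal on $\interior K(\sigma)$ and so preserves multipliers.
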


\begin{proof} The identity $\phi(T(\sigma))\subset\mathcal{A}(\infty)$ is readily verified, as is $\phi(K(\sigma))\subset K(r)$ and hence $\phi(\mathcal{J}(\sigma))\subset \mathcal{J}(r)$. Since these sets partition the plane, we in fact have $=$ in all three relations. 

The map $\phi$ is conformal in $\interior K(\sigma)$ since $\mu=0$ a.e. on $K(\sigma)$, and so $\phi$ maps attracting cycles of $\sigma$ to attracting cycles of $r$. Moreover, the critical values of $r$ consist of:
\begin{enumerate}
\item $\infty$, 
\item the $\phi$-images of the critical values of $r$ in the tiling set, and
\item  the $\phi$-images of those critical values of $\sigma$ which are attracted to attracting cycles within $K(\sigma)$.
\end{enumerate}
Hence all finite critical values of $r$ lie in basins of attraction for attracting cycles, and so $r$ is hyperbolic. 

Note that since $K(\sigma)=\phi^{-1}(K(r))$ and $K(\sigma)$ is connected and full, we deduce that $K(r)$ is connected and full. Thus $\mathcal{A}(\infty)$ is connected and full and so $\mathcal{J}(r)$ is connected. 
\end{proof}

\begin{cor}\label{tilingsetopen} The tiling set $T(\sigma)$ is open and simply connected.
\end{cor}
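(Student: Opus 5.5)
The plan is to read the corollary off Theorem \ref{respectivecount}, which already gives $\phi(T(\sigma))=\mathcal{A}(\infty)$ for the quasiconformal homeomorphism $\phi:\Chat\rightarrow\Chat$ of Notation \ref{phifirstinr}. Since $\phi$ is a homeomorphism of the sphere, openness and simple connectivity pass from $\mathcal{A}(\infty)$ to $T(\sigma)=\phi^{-1}(\mathcal{A}(\infty))$. So it suffices to show that $\mathcal{A}(\infty)$ is open and simply connected.

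\emph{Openness.} Basins of attracting cycles of a rational map are open: a neighborhood $N$ of the attracting fixed point $\infty$ is mapped into itself, and $\mathcal{A}(\infty)=\bigcup_n r^{-n}(N)$ is a union of open sets by continuity of $r$. This gives openness of $T(\sigma)$ immediately. It may seem surprising at first, since $T(\sigma)$ was defined to contain the closed tile $T^0(\sigma)$. However, the argument shows $\partial A_0$ is interior to $T(\sigma)$: by Proposition \ref{dc_exist}, $A_1^{\textrm{DC}}$ is attached along $\partial A_0$.

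\emph{Simple connectivity.} Theorem \ref{respectivecount} states that $K(r)$ is connected and full, and that $\mathcal{A}(\infty)=\Chat\setminus K(r)$. A connected open subset of $\Chat$ whose complement is connected is simply connected. Here:
\begin{enumerate}
\item connectivity of $\mathcal{A}(\infty)$ is fullness of $K(r)$;
\item connectivity of the complement is connectivity of $K(r)$.
\end{enumerate}
Alternatively, one can bypass $r$ and argue directly: $T(\sigma)=\Chat\setminus K(\sigma)$ with $K(\sigma)$ connected and full by Notation \ref{sigma_rdefn}(2). Openness is the only input needed there, and it is equivalent to closedness of $K(\sigma)$.

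I expect no serious obstacle. The one point to check is that the identification $\phi(T(\sigma))=\mathcal{A}(\infty)$ really includes the boundary $\partial A_0$, which the earlier theorem asserts. If that were in doubt, I would fall back on the direct observation that $g$ maps a neighborhood of $\overline{A_0}$ into $A_0$ via $F$.
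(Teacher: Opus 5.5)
Your proposal is correct and is essentially the paper's argument. Both pull the openness and simple connectivity of the basin $\mathcal{A}(\infty)$, whose complement $K(r)$ is connected and full, back to $T(\sigma)=\phi^{-1}(\mathcal{A}(\infty))$ via the homeomorphism $\phi$ of Theorem \ref{respectivecount}. One small correction to your fallback remark, which does not affect the argument: $F$ maps $B$ into the disc $\psi^{-1}(2^{-(\ell+1)}\mathbb{D})$ around $\psi^{-1}(0)$, and that disc need not lie in $A_0$.
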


\begin{proof} This follows since $T(\sigma)=\phi^{-1}(\mathcal{A}(\infty))$ and $\mathcal{A}(\infty)$ is open and simply connected by Theorem \ref{respectivecount}. 
\end{proof} 

\noindent Similarly we have the following consequence of Theorem \ref{respectivecount}. 

\begin{cor}\label{Jordan_curve_prop} For each $\sigma\in\Sigma_d$, one has $\mathcal{J}(\sigma)$ is closed and
 \[   \mathcal{J}(\sigma):=\partial K(\sigma)=\partial T(\sigma). \]
\end{cor}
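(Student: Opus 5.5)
The plan is to transport the corresponding statement for the rational map $r$ back to $\sigma$ through the global quasiconformal homeomorphism $\phi:\Chat\rightarrow\Chat$ of Notation \ref{phifirstinr}, using Theorem \ref{respectivecount}. The basic fact I will use is that $\phi$ is a homeomorphism of $\Chat$. It therefore preserves closures, interiors and boundaries: for every set $E\subset\Chat$ we have $\phi(\partial E)=\partial\phi(E)$.

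The first step is closedness. By Corollary \ref{tilingsetopen}, $T(\sigma)$ is open, so $K(\sigma)=\Chat\setminus T(\sigma)$ is closed. Then $\mathcal{J}(\sigma)=\partial K(\sigma)$ is closed, since a boundary is always closed.

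Next I show $\partial K(\sigma)=\partial T(\sigma)$. Since $T(\sigma)$ and $K(\sigma)$ are complementary, the general identity $\partial E=\partial(\Chat\setminus E)$ already gives this. To keep it consistent with the dynamics, I will also check it through $\phi$. Theorem \ref{respectivecount} gives $\phi(K(\sigma))=K(r)$ and $\phi(T(\sigma))=\mathcal{A}(\infty)$. Applying the boundary property of $\phi$, we get $\phi(\partial K(\sigma))=\partial K(r)$ and $\phi(\partial T(\sigma))=\partial\mathcal{A}(\infty)$.

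For a rational map with $\infty$ an attracting fixed point, standard Fatou--Julia theory gives $\partial K(r)=\partial\mathcal{A}(\infty)=\mathcal{J}(r)$. Moreover, Theorem \ref{respectivecount} states $\phi(\mathcal{J}(\sigma))=\mathcal{J}(r)$. Pulling back by $\phi^{-1}$ then yields $\partial K(\sigma)=\partial T(\sigma)=\mathcal{J}(\sigma)$.

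The only point needing care is confirming that $K(r)$ here, being the complement of $\mathcal{A}(\infty)$, has boundary exactly $\mathcal{J}(r)$. This is the classical fact that the Julia set is the boundary of any Fatou basin of attraction, applied to the basin $\mathcal{A}(\infty)$, which is completely invariant because it is connected by Theorem \ref{respectivecount}. I expect no genuine obstacle: the argument is a topological transfer.
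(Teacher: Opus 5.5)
Your proposal is correct, but its decisive step takes a different and more elementary route than the paper's. The paper gives no separate argument: it presents the corollary as a consequence of Theorem \ref{respectivecount}, transporting $\partial K(r)=\partial\mathcal{A}(\infty)=\mathcal{J}(r)$ back through the global quasiconformal homeomorphism $\phi$, just as it does for Corollary \ref{tilingsetopen}.

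You observe instead that $K(\sigma)=\Chat\setminus T(\sigma)$ by Definition \ref{tilingsetjuliasetdefn}. So $\partial K(\sigma)=\partial T(\sigma)$ is just the identity $\partial E=\overline{E}\cap\overline{\Chat\setminus E}=\partial(\Chat\setminus E)$. Also, $\mathcal{J}(\sigma)$ is closed because every boundary is closed, so you do not even need openness of $T(\sigma)$ from Corollary \ref{tilingsetopen}. This shows the statement as written uses no dynamics and none of the hypotheses defining $\Sigma_d$.

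Your subsequent transfer through $\phi$ is therefore redundant. At that point it only recovers $\partial K(\sigma)=\mathcal{J}(\sigma)$, which is the definition. What the paper's route buys is the identification $\mathcal{J}(\sigma)=\phi^{-1}(\mathcal{J}(r))$ with a genuine dynamical Julia set. That is what the later arguments lean on, for example local connectivity of $\mathcal{J}(\sigma)$. But that identification is already contained in Theorem \ref{respectivecount}, not in this corollary.

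One small correction to your redundant check. The full basin $\mathcal{A}(\infty)$ is completely invariant for any map, being the set of points whose orbits converge to $\infty$. So connectedness is not the reason it is completely invariant. The classical fact $\partial\mathcal{A}(\infty)=\mathcal{J}(r)$ holds for the full basin of any attracting cycle. Since $r$ here is actually anti-holomorphic, you should apply that fact to the second iterate of $r$.
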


\noindent The following Proposition establishes hyperbolicity of $\sigma$.

\begin{prop}\label{hyperbolicity} The map $\sigma$ is expanding on $\mathcal{J}(\sigma)$: namely there exists a conformal metric $\mu$ defined in a neighborhood $U$ of $\mathcal{J}(\sigma)$ so that $\inf_{z\in U}||D\sigma(z)||_\mu>1$.
\end{prop}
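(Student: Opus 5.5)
Since $\sigma=g$ near $\mathcal{J}(\sigma)$, I would transport the hyperbolicity of the rational map $r=\phi\circ g\circ\phi^{-1}$ from Theorem \ref{respectivecount}. By that theorem $r$ is hyperbolic, so there is a conformal metric on a neighborhood of $\mathcal{J}(r)$ in which $r$ is uniformly expanding; concretely, I would take the hyperbolic metric of the complement of the closure of the postcritical set and pass to an iterate. But $\phi$ is only quasiconformal, and $\mu\neq 0$ on parts of $T(\sigma)$ accumulating on $\mathcal{J}(\sigma)$, so conformality of the pulled-back metric cannot simply be read off. I would therefore run the standard hyperbolic-metric argument directly for $\sigma$.

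\textbf{Step 1: the postcritical set stays away from $\mathcal{J}(\sigma)$.} Let $P$ be the closure of the forward orbits of the critical values of $\sigma$, together with the tile $T^0=A_0$. By Condition (3) of Notation \ref{sigma_rdefn}, critical values in $K(\sigma)$ converge to attracting cycles inside $\interior K(\sigma)$. Critical values in $T(\sigma)$ land in $A_0$ after finitely many steps. Transporting by $\phi$, the accumulation set of $P$ lies in the attracting cycles of $r$ and in $\phi^{-1}$ of the attracting basin of $\infty$. Hence $P\cap\mathcal{J}(\sigma)=\emptyset$ and $\mathrm{dist}(P,\mathcal{J}(\sigma))>0$.

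\textbf{Step 2: set up the covering.} Let $\Omega=\Chat\setminus P$, which is hyperbolic since $P$ contains the open set $\interior A_0$. Let $\Omega_1=\sigma^{-1}(\Omega)$, taken inside the domain $r(D)$. By construction $\Omega_1\subset\Omega$, and since $\Omega_1$ omits $A_1\supsetneq A_0$, the inclusion is proper. By Proposition \ref{dc_exist2}, $\sigma:\Omega_1\to\Omega$ is an unbranched anti-holomorphic covering, because every critical value of $\sigma$ lies in $P$. Hence $\sigma$ is a local isometry from the hyperbolic metric $\rho_{\Omega_1}$ to $\rho_\Omega$. On the other hand, the inclusion $\Omega_1\hookrightarrow\Omega$ strictly contracts, with the contraction uniform on compact subsets of $\Omega_1$. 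Therefore $\|D\sigma\|_{\rho_\Omega}>1$ at every point of $\Omega_1$, and $\inf \|D\sigma\|_{\rho_\Omega}>1$ over any compact subset of $\Omega_1$.

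\textbf{Step 3: choose $U$.} $\mathcal{J}(\sigma)$ is compact (Corollary \ref{Jordan_curve_prop}), invariant, and contained in $\Omega_1$. I would take $U$ to be a small closed neighborhood of $\mathcal{J}(\sigma)$ inside $\Omega_1$ and let the metric be $\rho_\Omega$ restricted there. The main obstacle is ensuring $\mathcal{J}(\sigma)$ is compactly contained in $\Omega_1$, and in particular away from $\partial A_0$, where $\sigma$ fixes points and is not expanding. I would argue that $\mathcal{J}(\sigma)=\partial T(\sigma)$ is disjoint from $\overline{A_1}$ by Corollary \ref{tilingsetopen}: $A_1$ and its closure lie in the open set $T(\sigma)$, since a one-sided neighborhood of $\partial A_1$ maps into a neighborhood of $\partial A_0$, which lies in $T$. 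I would also check that points of $\mathcal{J}(\sigma)$ are not in $P$, which is Step 1.
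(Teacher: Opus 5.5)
Your proposal is correct and follows essentially the paper's route. The paper notes that the postcritical set stays a positive distance from $\mathcal{J}(\sigma)$ by Theorem \ref{respectivecount} and then appeals to the Poincar\'e-metric proof of Theorem 19.1 in \cite{MilnorCDBook}. You carry that argument out directly for this partially defined anti-holomorphic map, correctly adding $A_0$ to $P$ so that $\partial A_0\subset P$ and $\sigma:\Omega_1\to\Omega$ is a genuine unbranched covering.

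One small slip: $A_1=\sigma^{-1}(A_0)$ does not contain $A_0$. What you need for strict contraction is that $\interior A_1\setminus P$ is a nonempty open subset of $\Omega\setminus\Omega_1$. Here $\Omega$ is connected, being the complement of a closed Jordan disc together with a countable closed set.
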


\begin{proof} There is a positive distance between the postcritical set of $\sigma$ and $\mathcal{J}(\sigma)$ by Theorem \ref{respectivecount}, and so the result follows from Theorem 19.1 of \cite{MilnorCDBook}. 
\end{proof}

\begin{cor} For each $\sigma\in\Sigma_d$, the Julia set of $\sigma$ is locally connected.
\end{cor}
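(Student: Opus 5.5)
The quickest route is to transport local connectivity from the rational map $r$ of Notation \ref{phifirstinr}, using the global quasiconformal map $\phi$. By Theorem \ref{respectivecount}, $\phi:\Chat\rightarrow\Chat$ is a homeomorphism with $\phi(\mathcal{J}(\sigma))=\mathcal{J}(r)$. Here $r$ is a hyperbolic (anti-)rational map whose filled Julia set $K(r)$ is connected and full. Local connectivity is a topological property preserved by homeomorphisms. It therefore suffices to show that $\mathcal{J}(r)$ is locally connected, and then set $\mathcal{J}(\sigma)=\phi^{-1}(\mathcal{J}(r))$.

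For $\mathcal{J}(r)$, I would use the standard theorem that a hyperbolic rational map with connected Julia set has locally connected Julia set (Milnor, Theorem 19.2 / Corollary 19.3 of \cite{MilnorCDBook}, or Tan Lei--Yin Yongcheng). The anti-holomorphic case reduces to the holomorphic one, since $r^2$ is holomorphic, hyperbolic, and has the same Julia set. Connectedness of $\mathcal{J}(r)$ was shown in Theorem \ref{respectivecount}.

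If I wanted a self-contained argument tailored to the basin $\mathcal{A}(\infty)$, I would argue as follows. $\mathcal{A}(\infty)$ is simply connected (Corollary \ref{tilingsetopen}), and $\mathcal{J}(r)=\partial\mathcal{A}(\infty)$. Take a Riemann map $\Phi:\mathbb{D}^*\rightarrow\mathcal{A}(\infty)$ conjugating $\overline{z}^{d-1}$ to $r$ near $\infty$, using the degree-$(d-1)$ superattracting structure, or simply the fact that $r|_{\mathcal{A}(\infty)}$ is a proper map of degree $d-1$ with only $\infty$ as a critical point in the basin. Consider the equipotential curves $\gamma_n=\Phi(\{|z|=R^{1/(d-1)^n}\})$. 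Hyperbolicity gives an expanding metric near $\mathcal{J}(r)$, as in Proposition \ref{hyperbolicity}. The successive parametrizations $\gamma_n$ then satisfy $\sup|\gamma_{n+1}-\gamma_n|\le C\lambda^{-n}$, because each step is obtained by pulling back along inverse branches that contract the metric uniformly. So $\gamma_n$ converges uniformly to a continuous surjection $\mathbb{T}\rightarrow\mathcal{J}(r)$, and Carathéodory's theorem gives local connectivity.

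The main obstacle in the self-contained route is justifying the uniform contraction of inverse branches near $\mathcal{J}(r)$, in particular that the pullbacks of $\gamma_0$ stay inside the neighborhood $U$ where the metric is expanding. I would handle this by choosing $R$ so that $\gamma_0$ lies in $U$ minus the postcritical set, using that the postcritical set is at positive distance from $\mathcal{J}(r)$. In the paper's context, though, I would simply cite Milnor and the homeomorphism $\phi$.
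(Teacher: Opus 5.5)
Your main argument is the same as the paper's: $\phi$ maps $\mathcal{J}(\sigma)$ homeomorphically onto $\mathcal{J}(r)$ (Theorem \ref{respectivecount}), and $\mathcal{J}(r)$ is locally connected by the local connectivity result for hyperbolic maps with connected Julia set in Chapter 19 of \cite{MilnorCDBook}; your reduction of the anti-holomorphic case to $r^2$ is a detail the paper leaves implicit. Your optional self-contained sketch is wrong as written and should not be relied on: $\infty$ is a fixed point of $r$ of local degree $\ell\le d-1$ (superattracting only when $\ell\ge 2$), and $\mathcal{A}(\infty)$ generally contains other critical points, so there is no global B\"ottcher-type map conjugating $r$ to $\overline{z}^{d-1}$ on the basin, and the equipotential argument would have to be redone using, e.g., the Blaschke model from the proof of Theorem \ref{p_prop}.
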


\begin{proof} This follows from the assertion of Theorem \ref{respectivecount} that $\phi$ maps $\mathcal{J}(\sigma)$ homeomorphically onto $\mathcal{J}(r)$, and the fact that $\mathcal{J}(r)$ is locally connected since $r$ is hyperbolic with connected filled Julia set (see Chapter 19 of \cite{MilnorCDBook}). 
\end{proof}

\begin{thm}\label{p_prop} There exists $p\in\overline{\poldmh}$ and a quasiconformal homeomorphism $\psi_p: \Chat\rightarrow\Chat$ with $\partial_{\overline{z}}\psi_p=0$ a.e. on $\interior K(\sigma)$ with $\psi_p(K(\sigma))=K(p)$ so that $\psi_p\circ \sigma = p\circ \psi_p$ on a neighborhood of $K(\sigma)$. 
\end{thm}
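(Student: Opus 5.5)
Given Theorem \ref{r_prop} and Theorem \ref{respectivecount}, the plan is to show that $\infty$ is a superattracting fixed point of $r$ with $\mathcal{A}(\infty)$ totally invariant, so that $r$ is a polynomial up to M\"obius conjugacy. The degree of $r$ near $\infty$ is the local degree of $F(z)=\psi^{-1}(\overline{\psi(z)}^{\ell}/2)$ at $\psi^{-1}(0)$, namely $\ell$. To get a polynomial we need $\ell=d-1$, i.e.\ $\sigma:\gamma_1\to\partial A_0$ must have full degree $d-1$.

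The first task is therefore to prove that $A_1$ has no simply-connected components. Suppose $A_1^{\textrm{SC}}$ were one. Then $\phi(A_1^{\textrm{SC}})$ is a Fatou component of $r$ in $\mathcal{A}(\infty)$ distinct from the immediate basin of $\infty$; this is because $A_1^{\textrm{SC}}$ is separated from $A_1^{\textrm{DC}}\cup A_0$ by $K(\sigma)$, or more precisely by points of $K(\sigma)$ obtained as nested intersections, as in the proof of Proposition \ref{dc_exist}. But Theorem \ref{respectivecount} says $\mathcal{A}(\infty)=\phi(T(\sigma))$ is connected and simply connected, and a basin of attraction which is connected equals its immediate basin. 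So $\mathcal{A}(\infty)$ is a single Fatou component with $r^{-1}(\mathcal{A}(\infty))=\mathcal{A}(\infty)$. This makes the immediate basin totally invariant, so every preimage of $\infty$ lies in it. Since $\phi$ conjugates $g$ to $r$, the $g$-preimages of the immediate basin near $\infty$ correspond to the components of $A_1$. This forces $A_1$ to be connected, hence $A_1=A_1^{\textrm{DC}}$ by Proposition \ref{dc_exist}. Proposition \ref{dc_exist2} then gives that $\sigma:A_1\to A_0$ has degree $d$, and the boundary degrees split as $1$ on $\partial A_0$ and $d-1$ on $\gamma_1$, so $\ell=d-1$. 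An alternative route is to argue directly that the total degree of $r$ is $d-1$ and that $r^{-1}(\infty)=\{\infty\}$ because $g^{-1}(\psi^{-1}(0))=\{\psi^{-1}(0)\}$ once $A_1$ is connected.

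With $\infty$ totally invariant for the degree $d-1$ antiholomorphic map $r$, $r$ is an anti-polynomial. Conjugating by an affine map makes it monic (anti-monic), giving $p\in\overline{\textrm{Pol}_{d-1}}$, and hyperbolicity with connected $K(p)$ comes from Theorem \ref{respectivecount}. Let $\psi_p$ be the composition of this affine map with $\phi$. It is quasiconformal with $\partial_{\overline z}\psi_p=\mu\,\partial_z\psi_p=0$ a.e.\ on $K(\sigma)$, and $\psi_p(K(\sigma))=K(p)$. The conjugacy $\psi_p\circ\sigma=p\circ\psi_p$ holds wherever $g=\sigma$. By (\ref{defn_of_g1}) this is off $A_1\cup A_0\cup\sigma^{-1}(A)$, a closed set disjoint from $K(\sigma)$, so the conjugacy holds on a neighborhood of $K(\sigma)$.

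The main obstacle is the first step: rigorously ruling out simply-connected components of $A_1$, i.e.\ showing $\ell=d-1$. The argument relies on connectedness of $K(\sigma)$, on the total invariance derived from Theorem \ref{respectivecount}, and on correctly matching $g$-preimage components with $r$-Fatou components.
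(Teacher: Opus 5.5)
There is a genuine gap at exactly the step you flag, and it cannot be repaired. $A_1$ can have simply-connected components, so $\ell<d-1$ can occur and $\infty$ need not be a totally invariant point of $r$.

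Your argument conflates two kinds of complete invariance. Theorem \ref{respectivecount} gives complete invariance of the \emph{basin} $\mathcal{A}(\infty)$. You need complete invariance of the \emph{point} $\infty$, which it does not give. The nested-intersection argument of Proposition \ref{dc_exist} only shows the following: each complementary component of a component of $A_1$, other than the one containing $A_0$, meets $K(\sigma)$. A simply-connected component has a single complementary component, so nothing separates it from $A_0$. Indeed $T(\sigma)$ is connected (Corollary \ref{tilingsetopen}). Hence $\phi(A_1^{\textrm{SC}})$ is not a Fatou component. It is an open piece of the single component $\mathcal{A}(\infty)$ and contains extra $r$-preimages of $\infty$. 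This is why Construction \ref{sigmaqrconstr} explicitly allows $\ell\in\{1,\dots,d-1\}$ and simply-connected components, and why Theorem \ref{r_prop} only asserts that $\infty$ is attracting.

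For a concrete case, take $d=3$, $D=\overline{\mathbb{D}^*}$, and $r(z)=z+a/z+b/z^2$ with $0<a<1$ and $b>0$ small. The critical values of $\sigma$ are:
\begin{itemize}
\item $r(0)=\infty$, a superattracting fixed point and the only critical value in $K(\sigma)$;
\item $r(c_{1,2})\approx\pm2\sqrt a$, inside the nearly elliptic tile $T^0$;
\item $r(c_3)\approx -a^2/(4b)$, which lies outside $T^0$ but whose orbit shrinks in modulus and lands in $T^0$.
\end{itemize}
One checks that $K(\sigma)$ is the closure of the basin of $\infty$, so $\sigma\in\Sigma_3$. Riemann--Hurwitz gives $\chi(A_1)=3-2=1$. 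So $A_1$ is an annulus plus a disk (near $-a/b$), and $\ell=1<d-1$.

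The missing idea is the surgery step the paper uses. Since $\mathcal{A}(\infty)$ is simply connected and completely invariant, $r|_{\mathcal{A}(\infty)}$ is conjugate to a degree $d-1$ anti-Blaschke product with an attracting, in general not superattracting, fixed point. A standard quasiconformal surgery replaces this dynamics by $\overline{z}^{d-1}$ on the basin. This yields $p\in\overline{\poldmh}$ and a quasiconformal conjugacy between $r$ and $p$ on a neighborhood of $K(r)$, whose Beltrami coefficient vanishes on $K(r)$. Composing it with $\phi$ gives $\psi_p$. Alternatively, you could straighten the degree $d-1$ anti-polynomial-like restriction of $\sigma$ to a neighborhood of $K(\sigma)$.

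Once $p$ is obtained this way, your final paragraph goes through. That is the check that $\partial_{\overline z}\psi_p=0$ a.e. on $K(\sigma)$, and that the conjugacy holds off the compact set $A_0\cup A_1\cup\sigma^{-1}(A)\subset T(\sigma)$.
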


\begin{proof} Since $\mathcal{A}(\infty)$ (the basin of attraction of $\infty$ for $r$) is open and simply-connected, the map $r: \mathcal{A}(\infty) \rightarrow \mathcal{A}(\infty)$ is conjugate to a Blaschke product. A standard quasiconformal surgery (see Chapter 3 of \cite{MR3445628}) shows that there is a quasiconformal conjugacy between $r|_{K(r)}$ and $p|_{K(p)}$ for some $p\in\overline{\poldmh}$ so that the Beltrami coefficient of the conjugacy vanishes on $K(r)$. Composing this conjugacy with $\phi$ from Notation \ref{phifirstinr} gives the desired conjugacy.
\end{proof}

\section{Decomposing $\sigma$ into $\sigma_f$, $p$}\label{rtobf}

Theorem \ref{p_prop} showed that $\sigma: K(\sigma) \rightarrow K(\sigma)$ is conjugate to $p: K(p)\rightarrow K(p)$, and so we turn to proving the corresponding conjugacy statement for $\sigma: \mathbb{D}\setminus T^0\rightarrow\mathbb{D}$. Recall we proved $T(\sigma)$ is open and simply connected in Corollary \ref{tilingsetopen}, and hence there exists a conformal map of $T(\sigma)$ onto $\mathbb{D}$.

\begin{notation}\label{nudefnhere} Let $\sigma\in\Sigma_d$ and $\psi: T(\sigma) \rightarrow \mathbb{D}$ a Riemann map. Consider the mapping 
\[ \psi\circ\sigma\circ\psi^{-1} : \mathbb{D}\setminus\psi(T^0)\rightarrow\mathbb{D}.  \]
\end{notation}

\begin{thm}\label{exist_of_f} There exists $\sigma_f\in\Sigma_d^\mathcal{F}$ so that $\psi\circ\sigma\circ\psi^{-1} =\sigma_f$.
\end{thm}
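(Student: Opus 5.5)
Set $\tau:=\psi\circ\sigma\circ\psi^{-1}$ on $\mathbb{D}\setminus\psi(T^0)$. The plan is to show that the quadrature-domain structure of $\sigma$ near $\partial T^0$ can be transported to $\mathbb{D}$ via an annular uniformization. The natural candidate for $f$ is a uniformization of the doubly connected region $\psi(A_1^{\textrm{DC}}\cup\partial A_0)$ together with its reflection across $\psi(\partial A_0)$.

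\textbf{Step 1: conformal type of $\mathbb{D}\setminus\psi(T^0)$.} Since $T^0=\closure(\Chat\setminus r(D))$ is a closed Jordan domain (as $r$ is univalent near $D$), the set $\psi(T^0)$ is a compact Jordan domain in $\mathbb{D}$. Hence $\Omega:=\mathbb{D}\setminus\psi(T^0)$ is a doubly connected domain conformally equivalent to some $A_{s'}$.

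\textbf{Step 2: define the reflection $\rho'$.} Set $\Gamma:=\psi(\partial A_0)$, an analytic Jordan curve on which $\tau$ is the identity. Since $\tau$ is anti-holomorphic and fixes $\Gamma$ pointwise, $\tau$ is the Schwarz reflection across $\Gamma$ locally. Consider the annulus $U$ in $\Omega$ bounded by $\Gamma$ and by the curve $\psi(\partial A_1^{\textrm{DC}}\cap\partial A_2)$, that is, the outer boundary of the doubly connected tile. We will instead proceed more globally.

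\textbf{Step 3: construct $f$ directly.} Let $V\subset\mathbb{D}$ be the annular region $\psi(\sigma^{-1}(\Chat\setminus \closure A_1))$-side. Cleaner: let $W:=\mathbb{D}\setminus\closure\psi(A_1^{\textrm{SC}}\text{'s})\ldots$. The cleaner route is to consider $\tau$ on $U_0:=\psi(r(D)\cap T(\sigma))=\mathbb{D}\setminus\psi(T^0)$. We have $\tau(U_0)\subset\mathbb{D}$, with $\tau:\psi(A_1^{\textrm{DC}})\to\psi(A_0^\circ)$ of degree $\ell$. Glue two copies: form the Riemann surface $X$ obtained by taking $\Omega$ and the disc-with-tile, and identify along $\Gamma$ via $\tau$. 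Concretely, define $f^{-1}$ on the annulus $\Omega$ by a conformal map $\chi:\Omega\to A_{\sqrt s}$ sending $\Gamma$ to $\sqrt s\,\mathbb{T}$ and $\mathbb{T}$ to $\mathbb{T}$. Then extend $f:=\chi^{-1}$ to $A_s$ by $f(z):=\tau(f(\rho_s z))$ for $s<|z|<\sqrt s$. This extension is holomorphic, being a composition of two anti-holomorphic maps. It is continuous across $\sqrt s\mathbb{T}$ because $\tau=\mathrm{id}$ on $\Gamma$, so it is analytic there by Schwarz reflection/Painlevé.

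\textbf{Step 4: verify $f\in\mathcal{F}_d$.}
\begin{itemize}
\item \emph{Properness into $\mathbb{D}$.} As $|z|\to1$, $f(z)\to\mathbb{T}$. As $|z|\to s$, we have $\rho_s z\to\mathbb{T}$, so $f(\rho_s z)\to\partial\mathbb{D}$. Since $\sigma$ maps neighborhoods of $\partial T(\sigma)=\mathcal{J}(\sigma)$ (Corollary \ref{Jordan_curve_prop}) near $\mathcal{J}$, using $\sigma(T(\sigma)\setminus T^0)\subset T(\sigma)$ and properness of $\sigma$ on preimages, $\tau(w)\to\mathbb{T}$ as $w\to\mathbb{T}$. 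Thus $f:A_s\to\mathbb{D}$ is proper.
\item \emph{Degrees.} On $\mathbb{T}$, $f$ has degree $1$ since $\chi$ is a homeomorphism. On $s\mathbb{T}$, the degree equals the degree of $\tau$ on $\mathbb{T}$, namely the degree of $\sigma:\mathcal{J}\to\mathcal{J}$. Via Theorem \ref{respectivecount} this is the degree of $r|_{\mathcal{J}(r)}$, which is $d-1$.
\item \emph{Univalence near $\overline{A_{\sqrt s}}$.} This holds because $\chi^{-1}$ extends across $\sqrt s\mathbb{T}$ as a local homeomorphism, $\Gamma$ being analytic, and across $\mathbb{T}$ via the reflection defined above.
\end{itemize}
Finally $\sigma_f=f\rho_sf^{-1}=\tau$ on $f(\overline{A_{\sqrt s}})=\overline{\Omega}$, and a rotation gives $\sigma_f(1)=1$.

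\textbf{Main obstacle.} I expect the hardest step to be univalence of $f$ in a full neighborhood of $\overline{A_{\sqrt s}}$, especially near $\mathbb{T}$. Extending across $\mathbb{T}$ needs $f$ to continue holomorphically past $|z|=1$. I would first try to obtain this by reflection in $\mathbb{T}$, since $f(\mathbb{T})=\mathbb{T}$ by properness. The remaining task is to check that this extension stays injective on a thin collar; for this I would use that $f$ is a homeomorphism of $\mathbb{T}$.
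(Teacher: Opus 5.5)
Your final construction is correct and is essentially the paper's proof: take the conformal map $f\colon A_{\sqrt{s}}\to\mathbb{D}\setminus\psi(T^0)$, extend it to $A_s$ by $f=\tau\circ f\circ\rho_s$, deduce properness from $\partial T(\sigma)=\mathcal{J}(\sigma)$ and $\sigma(\mathcal{J}(\sigma))=\mathcal{J}(\sigma)$, read off $\deg(f|_{s\mathbb{T}})=d-1$ from the degree of $\sigma$ on $\mathcal{J}(\sigma)$, and normalize by a rotation (applied to $\psi$, so that $\psi\circ\sigma\circ\psi^{-1}=\sigma_f$ still holds). The exploratory fragments in Steps 2--3 are never used and should be removed; note also that $\tau\colon\psi(A_1^{\textrm{DC}})\to\psi(T^0)$ has degree $\ell+1$ rather than $\ell$, because $\Gamma$ itself is mapped to $\Gamma$ with degree one.
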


\begin{proof} Recall that $\sigma=\sigma_{r, D}$ where $r$ is rational and univalent on a neighborhood of $D$. Consider the topological annulus $A$ with outer boundary $\mathbb{T}$, and inner boundary $\psi\circ r(\partial D)$. Let $0<s<1$ be so that there exists a Riemann map $f: A_{\sqrt{s}} \rightarrow A$. Extend $f$ to $A_s$ by Schwarz Reflection: 
\begin{equation}\label{fsigmafdefn}    f(z):=  \psi\circ\sigma\circ\psi^{-1}\circ f\left(\frac{s}{\overline{z}}\right) \textrm{ for } s < |z| < \sqrt{s}.  \end{equation}

Note that $\psi^{-1}$ may not admit a single-valued extension to $\mathbb{T}$, and so it is not immediate that the formula (\ref{fsigmafdefn}) extends to a well-defined function on $|z|=s$. Nevertheless, since $\mathcal{J}(\sigma)=\partial T(\sigma)$ by Corollary \ref{Jordan_curve_prop}, and $\sigma(\mathcal{J}(\sigma))=\mathcal{J}(\sigma)$, we conclude from (\ref{fsigmafdefn}) that $f: A_s\rightarrow \mathbb{D}$ is proper. Hence $f: A_s\rightarrow \mathbb{D}$ extends continuously to a single-valued function on $s\mathbb{T}$. Moreover, since $\sigma: \mathcal{J}(\sigma) \rightarrow \mathcal{J}(\sigma)$ is of degree $d-1$, the map $f|_{s\mathbb{T}}$ is also of degree $d-1$. Post-composing $\psi$ with a rotation if necessary, we have $\sigma_f(1)=1$. As $f$ is univalent in a neighborhood of $\overline{A_{\sqrt{s}}}$, we conclude $f\in\mathcal{F}_d$. Moreover, since $\psi\circ\sigma\circ\psi^{-1}$ and $\sigma_f$ are both the identity on $\psi\circ r(\mathbb{T})$, we conclude that $\psi\circ\sigma\circ\psi^{-1}\equiv\sigma_f$.
\end{proof}

\noindent  We are now ready to collect our results and finish the proof of Theorem \ref{mating_thm}.

\vspace{2mm}


\noindent \emph{Proof of Theorem \ref{mating_thm}.} The polynomial $p$ was produced in Theorem \ref{p_prop} and $\sigma_f$ was produced in Theorem \ref{exist_of_f}, and the conformal mating Conditions (A) and (B) (see Definition \ref{conf_mat_defn}) were verified in Theorem \ref{p_prop} and Theorem \ref{exist_of_f}. Let us verify condition (C). Denote by $\psi_p: \Chat\rightarrow\Chat$ the quasiconformal mapping which restricts to a conjugacy $\psi_p: K(\sigma) \rightarrow K(p)$, and consider the composition
\[ \beta^{-1}\circ \psi_p \circ\psi_f^{-1} \]
defined in $\mathbb{D}$ extends to an orientation-reversing conjugacy $\mathbb{T}\rightarrow\mathbb{T}$ between $\sigma_f$ and $\overline{z}^{d-1}$, and after post-composing $\psi_f$ with a rotation if necessary this conjugacy fixes $1$ so that 
\[ \beta^{-1}\circ \psi_p \circ\psi_f^{-1} = \overline{\mathcal{E}_f},\]
(see Remark \ref{mathcalErem}) and so Condition (C) has been proven.

The uniqueness statement for $\sigma_f$ is readily verified. Uniqueness for $p$ follows from the fact that if $p$, $q$ are two hyperbolic polynomials which are hybrid conjugate in a neighborhood of their filled Julia sets, then $p$, $q$ are affine conjugate: see Corollary 2 to Proposition 6 in \cite{MR816367}. \qed


\section{Properties of $\mathcal{F}$ and $\Sigma_d^\mathcal{F}$.}\label{propofF}

In the previous Section \ref{rtobf} we decomposed $\sigma\in\Sigma_d$ into $\sigma_f$, $p$, and in the next Section \ref{bftor} we will do the opposite: we will weld $\sigma_f$, $p$ to produce $\sigma\in\Sigma_d$. But first in this Section \ref{propofF} we will need to record some properties of annular Schwarz reflections $\sigma_f\in\Sigma^\mathcal{F}_d$ which will be needed in Section \ref{bftor}. 

\begin{notation}\label{sigmafnotation} Let $f\in\mathcal{F}_d$. We denote $T^0:=\mathbb{D}\setminus f(A_{\sqrt{s}})$,  $A_n:=\sigma_f^{-n}(T^0)$ for $n\geq0$, and $T(\sigma_f)=\cup_{n=0}^\infty A_n$. 
\end{notation}

In Section \ref{rtobf} we proved some basic mapping properties of $\sigma\in\Sigma_d$ in the Tiling set $T(\sigma)$; now we will need to prove some corresponding results for $\sigma_f\in\Sigma_d^\mathcal{F}$. The statements of the following two Propositions \ref{annulus_structuref}, \ref{degreesigmafstatement} mimic those of Propositions \ref{dc_exist}, \ref{dc_exist2}.

\begin{prop}\label{annulus_structuref} There is a doubly-connected component of $A_1$ whose boundary contains $\partial A_0$, and if there are any other components of $A_1$ they must be simply-connected.
\end{prop}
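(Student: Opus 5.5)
The plan is to pull the whole question back to the round annulus $A_s$ through $f$, where $\sigma_f$ becomes the round reflection $\rho_s$. There the topology of preimages of a disc under a proper map can be read off with the maximum principle, so we do not need the connectivity arguments of Proposition \ref{dc_exist} (for $\sigma_f$ we have no a priori information about non-escaping sets).

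First we record the shape of $T^0$. Since $f$ is univalent on a neighborhood of $\overline{A_{\sqrt{s}}}$ and $\deg(f|_\mathbb{T})=1$, the set $f(A_{\sqrt{s}})$ is a topological annulus with outer boundary $\mathbb{T}$ and inner boundary the Jordan curve $f(\sqrt{s}\mathbb{T})$. Hence $T^0=\mathbb{D}\setminus f(A_{\sqrt{s}})$ is the closed Jordan domain bounded by $f(\sqrt{s}\mathbb{T})=\partial A_0$. Next, $f:A_s\to\mathbb{D}$ is proper of degree $1+(d-1)=d$ (the boundary degrees add), and $f$ is injective on $A_{\sqrt{s}}$ with image $\mathbb{D}\setminus T^0$. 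Therefore
\[ f^{-1}(T^0)\subset R:=\{s<|u|\le\sqrt{s}\}. \]
Since $\rho_s$ maps $R$ homeomorphically onto $\overline{A_{\sqrt{s}}}\setminus\mathbb{T}$ and $f$ is injective there, we obtain
\[ A_1=\sigma_f^{-1}(T^0)=f\circ\rho_s\bigl(f^{-1}(T^0)\bigr). \]
This set is the image of $f^{-1}(T^0)$ under a homeomorphism that fixes the circle $\sqrt{s}\mathbb{T}$ setwise, sending it onto $\partial A_0$. It therefore suffices to prove the statement for the components of $f^{-1}(T^0)$ inside $A_s$.

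The key step is a no-holes lemma. Let $V$ be a component of $f^{-1}(\interior T^0)$, and suppose some complementary component $H$ of $V$ is compactly contained in $A_s$. Then $\partial H\subset\partial V$, so $f(\partial H)\subset\partial T^0$. By the open mapping theorem, $f(H)$ is an open set whose boundary lies in $f(\partial H)\subset\partial T^0$. Since $\mathbb{D}\setminus T^0$ is connected and not contained in $f(H)$ (as $f(H)$ is relatively compact in $\mathbb{D}$), we get $f(H)\subset T^0$. Then $H\cup V$ lies in $f^{-1}(T^0)$ and is connected, contradicting that $V$ is a full component (interior points of $H$ would belong to $V$). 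Consequently every hole of $V$ contains one of the two boundary circles of $A_s$. So $V$ is either simply connected or a doubly connected region separating $s\mathbb{T}$ from $\mathbb{T}$, and at most one component can be of the latter type in a nested way adjacent to $\sqrt{s}\mathbb{T}$. Closed components and boundary points of $T^0$ can be handled the same way, by working with a slightly larger open Jordan domain, or by taking closures at the end.

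Finally, we identify the doubly connected component. Because $f$ is univalent near $\sqrt{s}\mathbb{T}$ and maps it onto $\partial T^0$, with $A_{\sqrt{s}}$ sent to the outside of $T^0$, a one-sided collar $\{\sqrt{s}-\epsilon<|u|<\sqrt{s}\}$ is mapped into $\interior T^0$. That collar lies in a single component $V_0$ with $\sqrt{s}\mathbb{T}\subset\partial V_0$. $V_0$ contains an essential loop, so by the lemma it is the doubly connected one, and all other components are simply connected. Transporting by $f\circ\rho_s$ gives the claim. The main obstacle is making the maximum-principle step rigorous at the level of closed versus open sets and boundary points, which is why I would carry it out for $\interior T^0$ and then pass to closures.
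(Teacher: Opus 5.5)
Your approach is the paper's argument in other coordinates. The paper runs the open-mapping/properness argument directly on $\sigma_f$: a third complementary component of $A_1^{\textrm{DC}}$ would force $\sigma_f$ to take values in $\mathbb{D}^*$. You instead conjugate by $f\circ\rho_s$, under which $\sigma_f$ becomes $f$ on $\{s<|u|\le\sqrt{s}\}$, and your no-holes lemma is the same maximum-principle step.

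\textbf{The gap is in the uniqueness part of the statement.} The lemma constrains each component only individually: each is simply connected or an annulus separating $s\mathbb{T}$ from $\mathbb{T}$. It says nothing about how many essential annuli there are. Suppose a second essential component $V_1$ sits between $V_0$ and $s\mathbb{T}$. Its complementary components contain $\{|u|\le s\}$ and $\{|u|\ge 1\}$, neither compactly contained in $A_s$, so the lemma never sees it. Yet under $f\circ\rho_s$ it becomes a second doubly connected component of $A_1$ surrounding $A_1^{\textrm{DC}}$, which is exactly what the proposition excludes. Your sentence ``at most one component can be of the latter type in a nested way adjacent to $\sqrt{s}\mathbb{T}$'' asserts this without proof, and your conclusion that all other components are simply connected rests on it.

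You can repair this with your own tool. Work with components of the closed set $f^{-1}(T^0)$, as explained below. Let $W$ be the region between $V_0$ and $V_1$. Its closure is compact in $A_s$, because properness makes $f^{-1}(T^0)$ compact in $A_s$, and $\partial W\subset V_0\cup V_1$. The same clopen argument in $\mathbb{D}\setminus T^0$ then gives $f(W)\subset T^0$. So the closure of each component of $W$ is a connected subset of $f^{-1}(T^0)$ meeting $V_0$ or $V_1$, contradicting maximality.

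An alternative route: every component of $f^{-1}(\mathbb{D}\setminus T^0)$ inside $\sqrt{s}\mathbb{T}$ maps properly onto the annulus $\mathbb{D}\setminus T^0$. It therefore accumulates on $s\mathbb{T}$ and contains the collar there, so there is only one such component. It lies on the $s\mathbb{T}$-side of every essential component, so nothing between two nested essential components maps outside $T^0$.

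\textbf{A smaller point: the lemma as written does not close.} You take $V$ to be a component of the open set $f^{-1}(\interior T^0)$. Then a complementary component $H$ is closed, so ``$f(H)$ is open'' is false. Also, $H\cup V\subset f^{-1}(T^0)$ contradicts nothing about $V$ being a component of $f^{-1}(\interior T^0)$. The parenthetical ``interior points of $H$ would belong to $V$'' does not follow, since $\interior H$ and $V$ are disjoint open sets. To reach a contradiction there you would need the local sector picture of $f$ at points of $f^{-1}(\partial T^0)$, where the sectors mapping outside $T^0$ would lie in $H$.

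Your argument goes through almost verbatim if $V$ is a component of the closed set $f^{-1}(T^0)$. Then:
\begin{itemize}
\item $H$ is open and $\partial H\subset V$;
\item $\partial f(H)\subset f(\partial H)\subset T^0$;
\item the clopen argument gives $f(H)\subset T^0$;
\item $H\cup V=\overline{H}\cup V$ is a connected subset of $f^{-1}(T^0)$ strictly larger than $V$.
\end{itemize}
So the closed setting, not the open one you proposed, is where the maximum-principle step is rigorous. It is also the setting in which the repair above works.
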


\begin{proof} Since $\sigma_f$ is orientation-reversing and fixes $\partial A_0$ pointwise, there is a one-sided neighborhood of $A_0$ mapped into $A_0$ by $\sigma_f$, so there is a component $A_1^{\textrm{DC}}$ of $A_1$ with at least two complementary components. There can not be a third component in $\Chat\setminus A_1^{\textrm{DC}}$ since the proper map $\sigma_f$ would necessarily take on values in $\mathbb{D}^*$ in this third component, whereas $\sigma_f$ is $\overline{\mathbb{D}}$-valued. Thus $A_1^{\textrm{DC}}$ has exactly two complementary components, and the argument that any other components of $A_1$ are simply-connected is similar. 
\end{proof}

\begin{prop}\label{degreesigmafstatement} The map $\sigma_f: A_1\rightarrow A_0$ is a proper branched cover of degree $d$, and for $n\geq2$, $\sigma_f: A_n \rightarrow A_{n-1}$ is a proper branched cover of degree $d-1$. 
\end{prop}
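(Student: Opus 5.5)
The plan is to mimic the proof of Proposition \ref{dc_exist2}, replacing the rational map $r$ by $f$ and the disc $D$ by the annulus $A_{\sqrt{s}}$. On $f(\overline{A_{\sqrt{s}}})$ we have $\sigma_f=f\circ\rho_s\circ f^{-1}$, where $f^{-1}$ is the $A_{\sqrt{s}}$-valued branch and $\rho_s$ is injective. Both $f^{-1}$ and $\rho_s$ are therefore $1:1$. Hence for any set $E\subset\mathbb{D}$, the $\sigma_f$-preimages of $E$ correspond bijectively, via $z\mapsto\rho_s(f^{-1}(z))$, to the $f$-preimages of $E$ lying in $\{s<|z|<\sqrt{s}\}$ (together with the boundary circle $\sqrt{s}\mathbb{T}$ where appropriate). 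Consequently $\sigma_f:A_n\to A_{n-1}$ is conjugate, via this homeomorphism, to $f:\rho_s(f^{-1}(A_n))\to A_{n-1}$. Since $f$ is proper on $A_s$, this restriction is a proper branched cover, and it remains only to count preimages.

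First we compute the total degree of $f:A_s\to\mathbb{D}$. It is a proper holomorphic map, so it has a well-defined degree $N$. By the argument principle, applied to $f-w$ for $w$ near $\partial\mathbb{D}$ (or equivalently by comparing boundary degrees), $N$ equals the sum of the degrees of $f$ on the two boundary circles, namely $1+(d-1)=d$. Here one should check orientations. As the boundary $\partial A_s$ is traversed, $f$ maps both $\mathbb{T}$ and $s\mathbb{T}$ onto $\mathbb{T}$. The degree of a proper map onto the disc is the winding of $f(\partial A_s)$ around $w$ counted with the induced boundary orientation, and the condition that $f$ is proper with the given degrees forces both contributions to have the same sign. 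So $N=d$.

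Next, since $f$ is univalent on $\overline{A_{\sqrt{s}}}$, every point of $f(A_{\sqrt{s}})$ has exactly one preimage in $A_{\sqrt{s}}$, and hence exactly $d-1$ preimages (with multiplicity) in $\{s<|z|\le\sqrt{s}\}$. Points of $A_0=T^0=\mathbb{D}\setminus f(A_{\sqrt{s}})$ have no preimage in $A_{\sqrt{s}}$, so all $d$ of their preimages lie in $\{s<|z|\le\sqrt{s}\}$; points on $\partial A_0=f(\sqrt{s}\mathbb{T})$ are handled by continuity. For $n\ge2$ we have $A_{n-1}\subset f(A_{\sqrt{s}})$, because $A_{n-1}$ is contained in the domain of $\sigma_f$ and is disjoint from $T^0$ (the sets $A_k$ are disjoint for distinct $k$ away from $\partial A_0$, since the tiling definition places $\partial A_0$ in $A_0$). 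Each point of $A_{n-1}$ therefore has exactly $d-1$ preimages in the reflected region. Transporting these counts through $\rho_s\circ f^{-1}$ shows that $\sigma_f:A_1\to A_0$ has degree $d$ and $\sigma_f:A_n\to A_{n-1}$ has degree $d-1$ for $n\ge2$.

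The main obstacle is the degree count $N=d$, specifically the orientation bookkeeping on the inner circle $s\mathbb{T}$. I would handle it cleanly by noting that $\log|f|$ is harmonic with boundary values $0$ on both circles. The flux of $\log|f|$ through $\mathbb{T}$ and through $s\mathbb{T}$ then computes the number of zeros of $f$. Equivalently, $N$ equals the total winding of $f(\partial A_s)$ about $0$, and since $|f|<1$ inside and $|f|\to1$ on both components, each component contributes positively. A secondary point is to confirm that $A_{n-1}$ avoids $T^0$ for $n\ge2$, which follows directly from Notation \ref{sigmafnotation}.
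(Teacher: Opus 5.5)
This is essentially the paper's proof. The paper just says the argument is the same as Proposition \ref{dc_exist2}: since $f^{-1}$ and $\rho_s$ are injective, $\sigma_f$-preimages correspond to $f$-preimages in $\{s<|z|\le\sqrt{s}\}$. Your boundary-degree (flux of $\log|f|$) computation that $f:A_s\to\mathbb{D}$ has degree $1+(d-1)=d$ correctly supplies the one step the paper leaves implicit.

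One correction concerns $\partial A_0$. Since $\sigma_f$ fixes $\partial A_0$ pointwise, $\partial A_0$ lies in every $A_k$, so $A_{n-1}\not\subset f(A_{\sqrt{s}})$. The $A_k$ are also not disjoint away from $\partial A_0$: non-trivial preimages of $\partial A_0$ lie in $A_1\cap A_2$. Moreover, points of $\partial A_0$ have $d$ preimages in $A_n$, not $d-1$. What is true, and suffices, is $A_{n-1}\setminus\partial A_0\subset f(A_{\sqrt{s}})$, because the domain $f(\overline{A_{\sqrt{s}}})$ of $\sigma_f$ meets $T^0$ only in $\partial A_0$. So the degree $d-1$ count should be read away from $\partial A_0$; the paper's one-line proof glosses over this boundary convention as well.
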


\begin{proof} The proof is the same as that of Proposition \ref{dc_exist2}.
\end{proof}

We will now prove that for any $\sigma_f\in\Sigma_d^\mathcal{F}$ we have that $\partial T(\sigma_f)=\mathbb{T}$; in other words the partition of the dynamical plane of annular Schwarz reflections is always trivial. We do this somewhat similarly to the methods Section \ref{dynam_plane_sec}: we will glue in an attracting fixed point into the domain $T(\sigma_f)$ where $\sigma_f$ is not defined in order to obtain a Blaschke product, and we will deduce the desired dynamical properties of the annular Schwarz reflection from analagous dynamical properties of the Blaschke product. The following Construction \ref{sigmafgluing} mimics Construction \ref{sigmaqrconstr}. 

\begin{construction}\label{sigmafgluing} Let $\sigma_f\in\Sigma_d^\mathcal{F}$. Assume $\CV(\sigma_f)\cap\partial A_0=\emptyset$ (see Remark \ref{cvassumpti}). Let $A$ be a topological annulus with:
\begin{enumerate}
\item $A\subset A_1$, 
\item $\partial A_0\subset \partial A$, and 
\item $A\cap\CV(\sigma_f)=\emptyset$. 
\end{enumerate}
Each component of $\sigma_f^{-1}(A)$ is a topological annulus one of whose boundary components coincides with a boundary component of $A_1$. As in Construction \ref{sigmaqrconstr}, we glue in an attracting fixed point $z_0\in A_0$, and use each annular component of $\sigma_f^{-1}(A)$ to interpolate with $\sigma_f$; let $F$ denote the map in $A_1$, $h$ the interpolation and consider:
\begin{equation}\label{defn_of_g}
g:=\begin{cases} F \textrm{ in } A_1\cup A_0,  \\  h \textrm{ in } \sigma_f^{-1}(A),  \\  \sigma_f \textrm{ otherwise in } \mathbb{D}. \end{cases}
\end{equation}
and, after noting $g(\mathbb{T})=\sigma_f(\mathbb{T})=\mathbb{T}$, we extend $g$ to a mapping $g:\Chat\rightarrow\Chat$ by setting $g(z)=1/\overline{g(1/\overline{z})}$ for $z\in\mathbb{D}^*$.
\end{construction}

\begin{prop} The map $g: \Chat\rightarrow\Chat$ is quasiregular. 
\end{prop}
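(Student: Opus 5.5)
The plan mirrors the proof of Proposition \ref{gisqr1}: show $g$ is quasiregular on each piece of the decomposition of $\overline{\mathbb{D}}$, check continuity across the interfaces, and then transfer to $\mathbb{D}^*$ through the reflection $z\mapsto 1/\overline{z}$, which we verify is compatible with $g$ along $\mathbb{T}$.

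\emph{Pieces inside $\mathbb{D}$.} On $A_1\cup A_0$ the map $F$ is built (as in Construction \ref{sigmaqrconstr}) from a conformal map composed with $z\mapsto \overline{z}^{\ell}/2$, or from $\phi\circ\sigma_f$ on the simply-connected components of $A_1$. In either case it is anti-holomorphic, hence quasiregular. On $\sigma_f^{-1}(A)$ the interpolation $h$ is quasiregular by construction, as in Section 2.3 of \cite{MR3445628}. On the rest of $\mathbb{D}$ the map is $\sigma_f$, which is anti-holomorphic.

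\emph{Continuity inside $\mathbb{D}$.} The three pieces agree on their common boundaries, because $h$ was chosen to equal $F$ on the boundary component shared with $A_1$ and to equal $\sigma_f$ on the other component. These interfaces are analytic arcs (or at least quasiarcs), so removability of such arcs for quasiregular maps gives that $g$ is quasiregular on $\mathbb{D}$.

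\emph{Behaviour near $\mathbb{T}$.} Here $g=\sigma_f$ on a one-sided neighborhood of $\mathbb{T}$ in $\mathbb{D}$. We need that this region avoids $\overline{A_1\cup A_0\cup\sigma_f^{-1}(A)}$. That holds because $A_0$ is compactly contained in $\mathbb{D}$: it is the complement of $f(A_{\sqrt{s}})$, whose outer boundary is $\mathbb{T}$. Consequently the preimages $A_1$ and $\sigma_f^{-1}(A)$ are also compactly contained in $\mathbb{D}$, since $\sigma_f$ is proper and maps $\mathbb{T}$ to $\mathbb{T}$. Since $f$ is univalent on a neighborhood of $\overline{A_{\sqrt{s}}}$, the formula $\sigma_f=f\circ\rho_s\circ f^{-1}$ shows that $\sigma_f$ extends continuously to $\mathbb{T}$ with $\sigma_f(\mathbb{T})=\mathbb{T}$. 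This is exactly the fact noted in the Construction.

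\emph{Extension to $\Chat$ (the main obstacle).} The key step is gluing across $\mathbb{T}$. On $\mathbb{D}^*$ the map $g(z)=1/\overline{g(1/\overline{z})}$ is the conjugate of a quasiregular map by an anti-conformal reflection, so it is quasiregular with the same dilatation bound. It agrees with $g$ on $\mathbb{T}$ because $g(\mathbb{T})\subset\mathbb{T}$ and $1/\overline{w}=w$ for $w\in\mathbb{T}$. Thus $g$ is continuous on $\Chat$. To make this clean I would first note that, by the Schwarz reflection principle, the anti-holomorphic map $\sigma_f$ on the collar near $\mathbb{T}$, which is $\mathbb{T}$-valued on $\mathbb{T}$, extends anti-holomorphically across $\mathbb{T}$, and that this extension coincides with the reflected definition. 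So $g$ is actually anti-holomorphic in a neighborhood of $\mathbb{T}$. Quasiregularity on $\Chat$ then follows from the quasiregularity of the pieces together with removability of the analytic curve $\mathbb{T}$ (and of the interior interfaces), exactly as in Proposition \ref{gisqr1}.
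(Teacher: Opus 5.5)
Your proposal is correct and is essentially the paper's argument: $g$ is quasiregular on each piece, continuous across the interfaces, and analytic arcs are removable, exactly as in Proposition \ref{gisqr1}. The paper's one-line proof leaves the gluing across $\mathbb{T}$ implicit, and you make it explicit. Your observation that $g$ is anti-holomorphic near $\mathbb{T}$, by the reflection principle, is a clean way to handle that step. What you actually need there is the continuous extension of $\sigma_f$ to $\mathbb{T}$ with $\sigma_f(\mathbb{T})=\mathbb{T}$, not global properness of $\sigma_f$.
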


\begin{proof} As in the proof of Proposition \ref{gisqr1} this follows since $g$ is quasiregular in each relevant region, and continuous across any common boundary.
\end{proof}

\begin{definition} We define a Beltrami coefficient $\mu$ in $\Chat$ as follows:
\begin{enumerate}
\item $\mu|_{A_1\cup A_0}:=0$,
\item $\mu|_{\sigma_f^{-1}(A)}:=h^*(\mu|_{A_1\cup A_0})=h^*(0)$,
\item Pull back $\mu|_{\sigma_f^{-1}(A)}$ under $\sigma_f^n$ to define $\mu$ in subsequent $\sigma_f$-pullbacks of $\sigma_f^{-1}(A)$
\item $\mu:=0$ elsewhere in $\mathbb{D}$, and
\item $\mu(z):=\frac{z^2}{\overline{z}^2}\overline{\mu(1/\overline{z})}$ for $z\in\mathbb{D}^*$.
\end{enumerate}
\end{definition}

\begin{rem}\label{symmetryform} The formula $\mu(z)=\frac{z^2}{\overline{z}^2}\overline{\mu(1/\overline{z})}$ for $z\in\mathbb{D}^*$ will ensure that the solution to the Beltrami equation with coefficient $\mu$ preserves $\mathbb{T}$ (see Section 4.2 of \cite{MR3432154}).
\end{rem}

\begin{prop}\label{g_invariancef} The Beltrami coefficient $\mu$ is $g$-invariant, and $||\mu||_{L^\infty}<1$. 
\end{prop}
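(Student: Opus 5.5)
We check $g$-invariance separately on each region of the piecewise definition of $g$ in Construction \ref{sigmafgluing}, first in $\mathbb{D}$ and then in $\mathbb{D}^*$ by symmetry. The bound on $\|\mu\|_{L^\infty}$ then comes from the fact that $\mu$ is built from the single quasiregular map $h$ by anti-conformal pullbacks. This follows the proof of Proposition \ref{g_invariance}.

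\textbf{Invariance in $\mathbb{D}$.} We verify $g^*\mu=\mu$ region by region.
\begin{enumerate}
\item On $A_1\cup A_0$ we have $\mu=0$. There $g=F$ is anti-holomorphic, a conjugate of $\overline{z}^\ell$ or a post-composition of $\sigma_f$ by a conformal map, and $F$ maps into $A_1\cup A_0$ where $\mu=0$. Hence $F^*\mu=0=\mu$.
\item On $\sigma_f^{-1}(A)$ we have $g=h$, which maps into $A_1\cup A_0$. By item (2) of the definition, $\mu=h^*(\mu|_{A_1\cup A_0})$, so invariance holds by construction.
\item On the iterated preimages $\sigma_f^{-n}(\sigma_f^{-1}(A))$ with $n\ge1$ we have $g=\sigma_f$. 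By item (3), $\mu$ is exactly $\sigma_f^*$ of $\mu$ on the image region, so invariance is tautological.
\item On the remaining part of $\mathbb{D}$, $\mu=0$. Its $\sigma_f$-image is either in $\mathbb{D}$ outside the regions above, where $\mu=0$, or in $\mathbb{T}$, which has measure zero. Since $\sigma_f$ is anti-holomorphic there, $\sigma_f^*0=0$.
\end{enumerate}
For item (4) we need the regions to be consistent under $\sigma_f$. The set of points that eventually land in $\sigma_f^{-1}(A)$ is backward- and forward-invariant in the appropriate sense. Its complement in $\mathbb{D}\setminus(A_0\cup A_1)$ maps into itself or into $A_0\cup A_1$, and it meets the latter only through $\sigma_f^{-1}(A)$.

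\textbf{Invariance in $\mathbb{D}^*$ and on $\mathbb{T}$.} Write $\tau(z)=1/\overline{z}$. We have $g=\tau\circ g\circ\tau$ on $\mathbb{D}^*$, and item (5) says exactly $\mu=\tau^*\mu$ (see Remark \ref{symmetryform}). Then
\[ g^*\mu=(\tau g\tau)^*\mu=\tau^* g^*\tau^*\mu=\tau^*g^*\mu=\tau^*\mu=\mu \quad\text{on } \mathbb{D}^*, \]
using invariance in $\mathbb{D}$ and the fact that $g$ maps $\mathbb{D}\cap\operatorname{dom}$ into $\overline{\mathbb{D}}$. The circle $\mathbb{T}$ has measure zero and is $g$-invariant, so it does not affect the a.e. identity.

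\textbf{Bound on $\|\mu\|_{L^\infty}$.} The map $h$ is $K$-quasiregular on the finitely many annuli of $\sigma_f^{-1}(A)$, so $\|h^*0\|_\infty\le k<1$. Pullback by the anti-holomorphic map $\sigma_f$ preserves $|\mu|$ pointwise, since it is composition with conformal-type maps. Therefore every iterated pullback satisfies the same bound $k$. Reflection by $\tau$ also preserves $|\mu|$. Hence $\|\mu\|_{L^\infty(\Chat)}\le k<1$.

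\textbf{Main obstacle.} The only delicate point is the bookkeeping in item (4): showing that the regions of definition are compatible, so that no point of the \emph{otherwise} region is mapped into $\sigma_f^{-1}(A)$ or its preimages without itself lying in one of them. This follows directly from how the iterated preimages are defined.
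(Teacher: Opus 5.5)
Your proof is correct and follows essentially the same approach as the paper, which simply refers back to Proposition \ref{g_invariance}: invariance holds by construction on the pulled-back region, $\mu=0$ with $g$ antiholomorphic elsewhere, and the bound comes from the quasiregularity of $h$ plus the fact that antiholomorphic pullbacks preserve $|\mu|$. Your $\tau$-symmetry computation supplies the $\mathbb{D}^*$ half that the paper leaves implicit.

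One small slip is in item (4). For $z\in A_2\setminus\sigma_f^{-1}(A)$ the image $\sigma_f(z)$ lies in $A_1\setminus A$, so it is not ``outside the regions above''. This is harmless, since $\mu=0$ on $A_0\cup A_1$; all you need is $\sigma_f(z)\notin\bigcup_{n\ge1}\sigma_f^{-n}(A)$, which is immediate from the definition.
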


\begin{proof} The proof is the same as that of Proposition \ref{g_invariance}.
\end{proof}

\begin{notation} By existence of solutions to the Beltrami equation, there exists a quasiconformal mapping we denote $\phi: \Chat\rightarrow\Chat$ so that $\phi_{\overline{z}}=\mu\phi_z$, and we choose $\phi$ so that $\phi(\mathbb{T})=\mathbb{T}$ and $\phi(\mathbb{D})=\mathbb{D}$ (see Remark \ref{symmetryform}). We set 
\[   B:=  \phi \circ g \circ \phi^{-1}. \] 
\end{notation}

\begin{thm}\label{p_propf} The map $B$ is a Blaschke product with an attracting fixed point in $\mathbb{D}$.
\end{thm}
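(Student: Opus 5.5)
The plan is to mirror the proof of Theorem \ref{r_prop}, adding one step that uses the reflection symmetry of the construction. The first step is to show that $B$ is anti-holomorphic on $\Chat$. By Proposition \ref{g_invariancef}, $\mu$ is $g$-invariant with $\|\mu\|_{L^\infty}<1$. We need invariance on both sides of $\mathbb{T}$. Inside $\mathbb{D}$ this is exactly the content of that proposition. In $\mathbb{D}^*$, $g$ is the conjugate of $g|_{\mathbb{D}}$ by the reflection $\tau(z)=1/\overline{z}$, and $\mu|_{\mathbb{D}^*}$ is by definition $\tau^*(\mu|_{\mathbb{D}})$; the formula in Remark \ref{symmetryform} is precisely the pullback under $\tau$. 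Hence invariance transfers across $\mathbb{T}$, and $\mathbb{T}$ itself is a null set. Since $\phi$ solves $\phi_{\overline{z}}=\mu\phi_z$ and $g$ is quasiregular (orientation-reversing), the composition $B=\phi\circ g\circ\phi^{-1}$ then has vanishing complex dilatation a.e. As in Theorem \ref{r_prop}, $B$ is an anti-holomorphic branched self-cover of $\Chat$, i.e.\ the conjugate of a rational map.

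The second step is to identify $B$ as a Blaschke product. Because $\mu$ is symmetric under $\tau$, uniqueness of normalized solutions gives $\phi\circ\tau=\tau\circ\phi$, consistent with the choice $\phi(\mathbb{T})=\mathbb{T}$ and $\phi(\mathbb{D})=\mathbb{D}$. By construction $g$ commutes with $\tau$ and satisfies $g(\mathbb{T})=\mathbb{T}$ and $g(\mathbb{D})\subset\mathbb{D}$: inside $\mathbb{D}$ the pieces $F$, $h$ and $\sigma_f$ all take values in $\mathbb{D}$, and $\sigma_f$ is $\overline{\mathbb{D}}$-valued and proper onto $\mathbb{D}$ away from $\mathbb{T}$. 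Therefore $B$ commutes with $\tau$, maps $\mathbb{T}$ to $\mathbb{T}$, and maps $\mathbb{D}$ into $\mathbb{D}$. A rational (or anti-rational) map with these properties is a (conjugate) Blaschke product. For the degree, $g|_{\mathbb{T}}=\sigma_f|_{\mathbb{T}}$, which has degree $d-1$ by Proposition \ref{degreesigmafstatement} and the defining degree condition of $\mathcal{F}_d$; alternatively one can count preimages of a point of $A_0$ using Proposition \ref{degreesigmafstatement} and the interpolation degrees.

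The third step is the attracting fixed point. The point $z_0\in A_0$ glued in by Construction \ref{sigmafgluing} is fixed by $g$, and $g$ acts near $z_0$ as $F$, the conjugate of $\zeta\mapsto\overline{\zeta}^{\ell}/2$ by a conformal map. On the region $A_1\cup A_0$ we have $\mu=0$, so $\phi$ is conformal there. Thus $\phi(z_0)\in\mathbb{D}$ is a fixed point of $B$ with multiplier of modulus at most $1/2$ when $\ell=1$, and superattracting when $\ell>1$. Either way it is attracting.

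The main obstacle I expect is the bookkeeping in the second step: verifying that $g$ really maps $\mathbb{D}$ into $\mathbb{D}$ and is continuous across $\mathbb{T}$. The reason is that the degree-$1$ outer boundary $\mathbb{T}$ of $A_s$ is mapped to $f(\mathbb{T})=\mathbb{T}$, and that portion of $A_0$ is adjacent to $\mathbb{T}$. I would handle this by checking directly that $\sigma_f$ extends continuously to $\mathbb{T}$ and is $\overline{\mathbb{D}}$-valued. This follows from univalence of $f$ near $\overline{A_{\sqrt{s}}}$ and properness of $f$ on $A_s$, which give $f(s\mathbb{T})=\mathbb{T}$.
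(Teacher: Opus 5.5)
Your proposal is correct and takes the same approach as the paper. The paper's proof is just the two-line observation that $B$ is (anti-)rational by $g$-invariance of $\mu$, preserves $\mathbb{T}$ (hence is a Blaschke product), and has an attracting fixed point at $\phi(z_0)$; you supply the details the paper calls readily verified, namely the symmetry of $\mu$ under $z\mapsto 1/\overline{z}$, $g(\mathbb{D})\subset\mathbb{D}$, and conformality of $\phi$ near $z_0$. One harmless slip: $A_0=T^0(\sigma_f)$ is the closed disc bounded by $f(\sqrt{s}\mathbb{T})$ and lies compactly inside $\mathbb{D}$, so it is not adjacent to $\mathbb{T}$; what you actually use, $f(\mathbb{T})=f(s\mathbb{T})=\mathbb{T}$ and hence $\sigma_f(\mathbb{T})=\mathbb{T}$, is correct.
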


\begin{proof}  The map $B$ is a Blaschke product since it is rational and fixes the unit circle. Moreover, it is readily verified that $B$ has an attracting fixed point at $\phi(z_0)$.
\end{proof}

\begin{thm}\label{tilingset=disc} We have $T(\sigma_f)=\mathbb{D}$.
\end{thm}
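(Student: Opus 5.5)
The plan is to transfer the question to the Blaschke product $B$ of Theorem \ref{p_propf}, exactly as Theorem \ref{respectivecount} did for $\sigma$ via the rational map $r$. By construction, $\phi$ conjugates $g$ to $B$. The map $g$ agrees with $\sigma_f$ on $\mathbb{D}\setminus(A_0\cup A_1\cup\sigma_f^{-1}(A))$, and every point of $T(\sigma_f)$ is eventually mapped by $g$ into $A_0\cup A_1$. There $g=F$ is conjugate to the contraction $\overline{w}^\ell/2$, so the orbit converges to $z_0$. Hence $\phi(T(\sigma_f))\subset\mathcal{A}_B(\phi(z_0))$, the immediate basin (or full basin) of the attracting fixed point of $B$. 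Conversely, if $z\in\mathbb{D}\setminus T(\sigma_f)$, then the $g$-orbit of $z$ coincides with the $\sigma_f$-orbit and never enters $A_0$. Since $g$ maps a neighborhood of $\overline{A_0}$ compactly into $A_0$, the orbit stays in a region avoiding $A_0$, so $z$ is not attracted to $z_0$. Thus $\phi(T(\sigma_f))=\mathcal{A}_B(\phi(z_0))\cap\mathbb{D}$. (Note that $\phi(\mathbb{D})=\mathbb{D}$ and $B$ commutes with reflection in $\mathbb{T}$, so the basin of $\phi(z_0)$ lies in $\mathbb{D}$.)

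It therefore suffices to show that the basin of $\phi(z_0)$ is all of $\mathbb{D}$, i.e.\ $\mathcal{J}(B)=\mathbb{T}$. Since $B$ is a Blaschke product preserving $\mathbb{D}$, $B|_{\mathbb{D}}:\mathbb{D}\to\mathbb{D}$ is a holomorphic self-map of the disc with an interior attracting fixed point $\phi(z_0)$. By the Schwarz lemma (or Denjoy--Wolff), after conjugating that point to $0$, every orbit in $\mathbb{D}$ converges locally uniformly to it. So the whole of $\mathbb{D}$ is the basin, and hence $T(\sigma_f)=\phi^{-1}(\mathbb{D})=\mathbb{D}$. To make this rigorous, I would check that $B(\mathbb{D})\subset\mathbb{D}$. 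This follows from $g(\mathbb{D})\subset\overline{\mathbb{D}}$: $\sigma_f$ is $\overline{\mathbb{D}}$-valued, and $F,h$ take values in $A_1\cup A_0\subset\mathbb{D}$. Together with openness of $B$, this gives $B(\mathbb{D})\subset\mathbb{D}$. One also needs that $B$ is not an automorphism, which holds because of its degree, computed from Proposition \ref{degreesigmafstatement} as $d-1\geq2$.

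The main obstacle is the second inclusion in the first paragraph: justifying that a point whose $\sigma_f$-orbit never meets $A_0$ really cannot be in the basin. I would handle this by noting that $\phi(A_0\cup A_1)$ is a forward-invariant neighbourhood of $\phi(z_0)$ that every basin orbit must eventually enter. Since the $g$-orbit of a non-escaping point avoids $A_0$, it avoids $A_1$ as well (as $g(A_1)\subset A_0$ region on the $\sigma_f$ side). This contradicts membership in the basin, which combined with the Schwarz lemma shows there are no such points in $\mathbb{D}$. I would also check, as in Remark \ref{cvassumpti}, that the case $\CV(\sigma_f)\cap\partial A_0\neq\emptyset$ requires only a modified choice of $A$.
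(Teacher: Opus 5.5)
Your proposal is correct and follows essentially the paper's route: pass to $B$, use that every point of $\mathbb{D}$ is attracted to $\phi(z_0)$, and then use $g=\sigma_f$ off $A_0\cup A_1\cup\sigma_f^{-1}(A)$ to conclude that every $\sigma_f$-orbit in $\mathbb{D}$ reaches $T^0(\sigma_f)$; you justify the attraction step via the Schwarz lemma, where the paper simply recalls $\mathcal{J}(B)=\mathbb{T}$. Only your second inclusion (convergence to $z_0$ forces escape) is actually needed, and since $B$ is anti-holomorphic you should apply the Schwarz lemma to $|B|$ after moving the fixed point to $0$ (or to the holomorphic second iterate $B\circ B$).
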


\begin{proof} Recall that since Beltrami coefficient $\mu$ is $\mathbb{T}$-symmetric, so is the map $\phi$. In other words, $\phi(\mathbb{T})=\mathbb{T}$ and $\phi(\mathbb{D})=\mathbb{D}$. Recall that $\mathbb{T}=\mathcal{J}(B)$ (since $B$ has an attracting fixed point in $\mathbb{D}$), and for any $z\in\mathbb{D}$ the orbit of $z$ converges to the attracting fixed point $z_0$. Since $\phi$ conjugates $B$ to $g$ and $\phi(\mathbb{D})=\mathbb{D}$, it follows that the orbit of any $z\in\mathbb{D}$ under $g$ also converges to $z_0\in A_0$. As $g=\sigma_f$ in $\mathbb{D}\setminus (A_1\cup A_0 \cup \sigma_f^{-1}(A))$, it follows that the orbit of any $z\in\mathbb{D}$ under $\sigma_f$ eventually lands in $T^0(\sigma_f)$. In other words, $\mathbb{D}\subset T(\sigma_f)$, and since $T(\sigma_f)\subset\mathbb{D}$, the result follows.
\end{proof}

\section{Welding $\sigma_f$, $p$.}\label{bftor}

In this Section we prove Theorem \ref{mating_thm2}. We start by welding a given $\sigma_f$, $p$ in the following Construction. For a Jordan curve $\gamma\subset\mathbb{C}$, we let the bounded component of $\Chat\setminus\gamma$ be denoted by $\interior(\gamma)$. 

\begin{construction}\label{section5const} Let $\sigma_f\in\Sigma^\mathcal{F}_d$ and $p\in\overline{\poldmh}$. Assume without loss of generality that $0\in T^0(\sigma_f)$. Since $\sigma_f$ is expanding on $\mathbb{T}$, there exists $r<1$ so that:
\begin{enumerate} 
\item $\sigma_f(r\mathbb{T})$ is properly contained in $r\mathbb{D}$, and
\item $r\mathbb{D}$ contains $T^0(\sigma_f)\cup\CV(\sigma_f)$.
\end{enumerate}
Consider the annulus $A$ with inner boundary $r\mathbb{T}$ and outer boundary $\sqrt[d-1]{r}\mathbb{T}$. As in Construction \ref{sigmaqrconstr}, there exists a quasiregular interpolation $h$ in $A$ between $\sigma_f|_{r\mathbb{T}}$ and $\overline{z}^{d-1}|_{\sqrt[d-1]{r}\mathbb{T}}$. We define the map
\begin{equation}
G(z):=\begin{cases} \sigma_f(z) \textrm{ in } r\mathbb{D}\setminus T^0(\sigma_f),  \\  h(z) \textrm{ in } A,  \\  \overline{z}^{d-1} \textrm{ in } \mathbb{D}\setminus\sqrt[d-1]{r}\mathbb{D}  \\  \end{cases}
\end{equation}
We let $\psi: \mathcal{A}(\infty)\rightarrow\mathbb{D}$ denote a $\mathbb{D}$-valued B\"ottcher coordinate for the basin of attraction for $\infty$ of $p$, with $\psi(\infty)=0$, and we define the map
\begin{equation}
g(z):=\begin{cases} p \textrm{ in } K(p),  \\  \psi^{-1}\circ G\circ\psi \textrm{ in } \mathcal{A}(\infty).  \end{cases}
\end{equation}
\end{construction}

\begin{prop}\label{gisqr} The map $g: \Chat\setminus \psi^{-1}(T^0(\sigma_f)) \rightarrow\Chat$ is quasiregular. 
\end{prop}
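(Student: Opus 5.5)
The plan is the same as for Proposition \ref{gisqr1}: check that $g$ is quasiregular (or anti-quasiregular, consistently orientation-reversing) on each piece of its definition, then check continuity across the interfaces and invoke removability of analytic arcs and Jordan curves of measure zero.

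First, on $\mathcal{A}(\infty)\setminus\psi^{-1}(T^0(\sigma_f))$ we have $g=\psi^{-1}\circ G\circ\psi$, where $\psi$ is conformal. It therefore suffices to see that $G$ is quasiregular on $\mathbb{D}\setminus T^0(\sigma_f)$. Its three pieces are quasiregular:
\begin{enumerate}
\item $\sigma_f$ is anti-holomorphic on $r\mathbb{D}\setminus T^0(\sigma_f)$;
\item $h$ is a quasiregular interpolation on $A$;
\item $\overline{z}^{d-1}$ is anti-holomorphic on the outer annulus.
\end{enumerate}
The pieces agree on the circles $r\mathbb{T}$ and $\sqrt[d-1]{r}\mathbb{T}$ by the defining property of $h$. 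The interpolation exists because $\sigma_f|_{r\mathbb{T}}$ and $\overline{z}^{d-1}|_{\sqrt[d-1]{r}\mathbb{T}}$ are coverings of the same degree $d-1$ onto disjoint nested curves $\sigma_f(r\mathbb{T})\subset r\mathbb{D}$ and $r\mathbb{T}$. For the degree: $r\mathbb{D}$ contains all critical values and $T^0$, and $\sigma_f$ is expanding near $\mathbb{T}$ with degree $d-1$ there by Proposition \ref{degreesigmafstatement}. Removability of analytic circles then gives that $G$ is quasiregular.

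On $\interior K(p)$ the map $g=p$ is anti-holomorphic. The main point is the interface $\mathcal{J}(p)=\partial\mathcal{A}(\infty)$, and this is where I expect the difficulty. Near $\mathcal{J}(p)$, i.e.\ on $\psi^{-1}(\mathbb{D}\setminus\sqrt[d-1]{r}\mathbb{D})$, we have $G=\overline{z}^{d-1}$. Since $\psi$ is the Böttcher coordinate conjugating $p$ to $\overline{z}^{d-1}$, we get $g=\psi^{-1}\circ\overline{z}^{d-1}\circ\psi=p$ there. Thus $g\equiv p$ on a full neighborhood of $K(p)$ (the region inside the equipotential $\psi^{-1}(\sqrt[d-1]{r}\mathbb{T})$). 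No gluing along the possibly wild curve $\mathcal{J}(p)$ is needed, and $g$ is anti-holomorphic there.

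So $g$ is locally either anti-holomorphic or equal to a conformal conjugate of $h$, and the only seams are the two analytic equipotential curves $\psi^{-1}(r\mathbb{T})$ and $\psi^{-1}(\sqrt[d-1]{r}\mathbb{T})$, where continuity holds. Removability of analytic arcs for quasiregular maps finishes the proof. The one thing to double-check is the normalization: the $\mathbb{D}$-valued Böttcher map $\psi$ (the inverse of $\beta$ composed with inversion) must conjugate $p$ exactly to $\overline{z}^{d-1}$ rather than to a rotated version. This holds for monic-conjugate $p$ after the normalization $\beta'(\infty)>0$.
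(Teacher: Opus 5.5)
Your proposal is correct and takes the same approach as the paper: $G$ is (anti-)quasiregular piecewise, continuous across the analytic circles $r\mathbb{T}$ and $\sqrt[d-1]{r}\mathbb{T}$, and removability of analytic arcs does the rest. You also observe that $g=\psi^{-1}\circ\overline{z}^{d-1}\circ\psi=p$ on a neighborhood of $K(p)$, so the non-analytic curve $\mathcal{J}(p)$ is never a seam; this is exactly the step the paper leaves implicit when it passes from ``$G$ is quasiregular on $\mathbb{D}$'' to ``$g$ is quasiregular on $\Chat$''.
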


\begin{proof} The map $G$ is quasiregular in each of the regions in the piecewise-definition and continuous across the common boundaries, hence by removability of analytic arcs for quasiregular mappings, $G$ is quasiregular on $\mathbb{D}$ and $g$ is quasiregular on $\Chat$. 
\end{proof}

The next Definition \ref{beltcoefflasttime} and Proposition \ref{g_invariance_sigmaf} mimic Definition \ref{gisqrsectbelt} and Proposition \ref{g_invariance} almost exactly, and so we omit the proof of Proposition \ref{g_invariance_sigmaf}.

\begin{definition}\label{beltcoefflasttime} We define a Beltrami coefficient $\nu$ in $\mathbb{D}$ as follows:
\begin{enumerate}
\item $\nu:=0$ in $r\mathbb{D}$,
\item $\nu|_{A}:=h^*(\nu|_{r\mathbb{D}})=h^*(0)$, 
\item pull back $\nu$ under $\overline{z}^{d-1}$ to define $\nu$ throughout $\mathbb{D}\setminus(r\mathbb{D}\cup A)$, and
\item $\nu:=0$ elsewhere in $\mathbb{D}$.
\end{enumerate}
We define the Beltrami coefficient $\mu$ in $\Chat$ by $\mu:=\psi^*(\nu)$ in $\mathcal{A}(\infty)$ (the pullback of $\nu$ under $\psi: \mathcal{A}(\infty)\rightarrow\mathbb{D}$) and $\mu:=0$ otherwise.
\end{definition}

\begin{prop}\label{g_invariance_sigmaf} The Beltrami coefficient $\mu$ is $g$-invariant, and $||\mu||_{L^\infty}<1$. 
\end{prop}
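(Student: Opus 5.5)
We must show that $\mu$ is $g$-invariant, i.e.\ $g^*\mu=\mu$ a.e.\ on the domain of $g$, and that $\|\mu\|_{L^\infty}<1$. Since $\mu=\psi^*\nu$ on $\mathcal{A}(\infty)$ and $g=\psi^{-1}\circ G\circ\psi$ there with $\psi$ conformal, invariance of $\mu$ on $\mathcal{A}(\infty)$ is equivalent to $G^*\nu=\nu$ on $\mathbb{D}\setminus T^0(\sigma_f)$. On $K(p)$ we have $\mu=0$ and $g=p$ is anti-holomorphic, so $g^*\mu=0=\mu$ there. Also $\mathcal{J}(p)$ has measure zero, since $p$ is hyperbolic. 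The argument therefore reduces entirely to the disc model, which we handle region by region.

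Take $z$ in $\mathbb{D}\setminus T^0(\sigma_f)$. By property (1) of Construction \ref{section5const}, $G$ maps $r\mathbb{D}\setminus T^0(\sigma_f)$ into $\overline{\mathbb{D}}$. We need its image to lie in $r\mathbb{D}$, where $\nu=0$. I would argue this from the fact that $\sigma_f(r\mathbb{T})\subset r\mathbb{D}$, together with $\sigma_f$ being proper onto $\mathbb{D}$ from its domain and $\overline{\mathbb{D}}$-valued. If this fails, I would choose the region where $\nu$ is set to zero to be forward invariant instead. Granting it, $\sigma_f$ is anti-holomorphic, so $G^*\nu=\sigma_f^*0=0=\nu$ there. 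On $A$, $h$ maps into the annulus between $\sigma_f(r\mathbb{T})$ and $r\mathbb{T}$, which lies in $r\mathbb{D}$ where $\nu=0$. By definition $\nu|_A=h^*0$, so invariance holds. On $\mathbb{D}\setminus\sqrt[d-1]{r}\mathbb{D}$ we have $G=\overline{z}^{d-1}$, which maps this region into $\mathbb{D}\setminus r\mathbb{D}$, that is, into $A\cup(\mathbb{D}\setminus\sqrt[d-1]{r}\mathbb{D})$. Item (3) of Definition \ref{beltcoefflasttime} defines $\nu$ there precisely as the pullback under $\overline{z}^{d-1}$ of its values on $A$ and on successive preimages. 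These preimages $\{\sqrt[(d-1)^{n}]{r}<|z|<\sqrt[(d-1)^{n+1}]{r}\}$ exhaust the annulus up to $\mathbb{T}$. Hence $G^*\nu=\nu$ holds there tautologically, and item (4) is vacuous.

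For the bound, $h$ is quasiregular, so $k:=\|h^*0\|_{L^\infty(A)}<1$. Every other nonzero value of $\nu$ is obtained from $\nu|_A$ by pulling back under iterates of the anti-holomorphic map $\overline{z}^{d-1}$, and such pullbacks preserve the modulus pointwise. Thus $\|\nu\|_\infty=k$, and pulling back by the conformal map $\psi$ gives $\|\mu\|_\infty=k<1$.

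The main obstacle is the bookkeeping that $G$ maps each region into a region where $\nu$ is defined compatibly. In particular, one has to confirm that the image of $r\mathbb{D}\setminus T^0(\sigma_f)$ under $\sigma_f$ stays inside $r\mathbb{D}$, or else adjust the definition of $\nu$ by pulling back under $\sigma_f$ as in Definition \ref{gisqrsectbelt}.
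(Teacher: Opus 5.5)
Your argument is correct and is the one the paper intends: the paper omits this proof as a repeat of Proposition \ref{g_invariance}. That proof has three parts, all of which you reproduce: invariance by construction on the basin (which you reduce to the disc model), trivial invariance on $K(p)$ where $\mu=0$ and $g=p$ is anti-holomorphic, and the bound from quasiregularity of $h$, since anti-holomorphic pullbacks preserve $|\mu|$.

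The step you flagged holds without modifying $\nu$. $\overline{\sigma_f}$ is holomorphic on $r\mathbb{D}\setminus T^0(\sigma_f)$ and continuous up to its boundary $r\mathbb{T}\cup\partial T^0(\sigma_f)$. On $r\mathbb{T}$ we have $|\sigma_f|<r$ by property (1). On $\partial T^0(\sigma_f)$ we also have $|\sigma_f|<r$, because $\sigma_f$ fixes it pointwise and $T^0(\sigma_f)\subset r\mathbb{D}$ by property (2). The maximum modulus principle then gives $\sigma_f(r\mathbb{D}\setminus T^0(\sigma_f))\subset r\mathbb{D}$; properness of $\sigma_f$ is not what does the work here.
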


\begin{notation} By existence of solutions to the Beltrami equation, there exists a quasiconformal mapping we denote $\phi: \Chat\rightarrow\Chat$ so that $\phi_{\overline{z}}=\mu\phi_z$, and we normalize $\phi$ so that $\phi(0)=\infty$ and $\phi(\infty)=0$. We set 
\[   \sigma(z):=  \phi \circ g \circ \phi^{-1}(z) \textrm{ for } z\in\Chat\setminus\phi\circ\psi^{-1}(T^0(\sigma_f)). \] 
\end{notation}

\begin{thm} We have $\sigma\in\Sigma_d$. In other words, there exists $r\in\ratd$ univalent in a neighborhood of a Euclidean disc $D$ with $\sigma_{r,D}=\sigma$ satisfying $K(\sigma)$ is connected, full and the critical values of $\sigma$ in $K(\sigma)$ are attracted to attracting cycles.
\end{thm}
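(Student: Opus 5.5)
Write $Q:=\phi\circ\psi^{-1}(T^0(\sigma_f))$; this is the closed set on whose complement $\sigma$ is defined. The plan is to identify $\sigma$ as a Schwarz reflection by working on the uniformizing annulus $A_{\sqrt{s}}$ of $f$ and then reflecting across $\sqrt{s}\mathbb{T}$. First, $\sigma$ is anti-holomorphic on $\Chat\setminus Q$. This follows from the $g$-invariance of $\mu$ (Proposition \ref{g_invariance_sigmaf}), exactly as in the proof of Theorem \ref{r_prop}. Moreover $\sigma$ fixes $\partial Q$ pointwise, since $\sigma_f$ is the identity on $\partial T^0(\sigma_f)=f(\sqrt{s}\mathbb{T})$.

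Next define $\Omega:=\Chat\setminus Q$. This is a Jordan domain with analytic boundary: $\partial T^0(\sigma_f)$ is an analytic curve, and $\psi^{-1}$ and $\phi$ are conformal near it. Indeed $\mu=0$ on $\psi^{-1}(r\mathbb{D})$, which is a neighborhood of $\psi^{-1}(T^0(\sigma_f))$.

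I will then build the rational map $R$ by pulling back to $\overline{\mathbb{D}^*}$. Let $M(z)=\sqrt{s}/z$, which maps $\mathbb{D}^*$ onto $\sqrt{s}\mathbb{D}$, and define
\[ R:=\phi\circ\psi^{-1}\circ f\circ M \quad\text{on } \{1<|z|<1/\sqrt{s}\}. \]
In the coordinate $w=M(z)\in A_{\sqrt{s}}$, the identity $\sigma_f=f\circ\rho_s\circ f^{-1}$ shows that $\sigma\circ R=R\circ\rho$, where $\rho(z)=1/\overline{z}$. Using this relation I extend $R$ to all of $\mathbb{D}^*$, and to the closed region outside it, by the rule $R:=\sigma\circ R\circ\rho$ wherever the right side is defined. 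Composing the outer pieces with $\sigma$, $g$ and $\phi$ then lets $R$ continue across the image of $\mathbb{T}$ (the set $\phi(\mathcal{J}(p))$) and into $\phi(K(p))$. The cleaner way to say this is to glue: $\Chat$ is the union of $\overline{\Omega}$ and $Q$, where $Q$ is conformally a copy of $\overline{\mathbb{D}}$ via $R$ restricted to $\overline{\mathbb{D}^*}$ after reflection. Concretely, set $D:=\overline{\mathbb{D}^*}$ and define
\[ R(z):=\sigma\bigl(R(1/\overline{z})\bigr) \quad\text{for } z\in\mathbb{D}, \]
where $R$ on $\overline{\mathbb{D}^*}$ is a conformal map of $\overline{\mathbb{D}^*}$ onto $Q$. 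Then $R$ is holomorphic on $\mathbb{D}$ and continuous across $\mathbb{T}$, since $\sigma=\mathrm{id}$ on $\partial Q$. By Morera's theorem $R$ is holomorphic on $\Chat$, hence rational, and $\sigma=R\circ\rho\circ R^{-1}$ on $\Omega=R(\mathbb{D})$. Equivalently, $\sigma=\sigma_{R,D}$ with $D=\overline{\mathbb{D}^*}$ up to the Möbius change swapping the roles of the two discs. This makes $\Omega$ a quadrature domain. Univalence of $R$ on a neighborhood of $D$ follows from the univalence of $f$ on a neighborhood of $\overline{A_{\sqrt{s}}}$ (condition (3) of $\mathcal{F}_d$), since $Q$ has analytic boundary.

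For the degree of $R$, count preimages of a point of $Q^{\circ}$. There is one preimage in $D$. The remaining preimages are the $\sigma$-preimages of that point, and Proposition \ref{degreesigmafstatement}, transported by $\phi\circ\psi^{-1}$, says there are $d-1$ of these (for a point of $Q^\circ$ the relevant count is $\deg(\sigma_f:A_1\to A_0)$ minus the one on $\mathbb{T}$-side; I will double-check this against the convention that the degree of $\sigma$ on $A_1$ is $d$). This gives $\deg R=d$.

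Finally, the dynamical conditions. By Theorem \ref{tilingset=disc}, $T(\sigma_f)=\mathbb{D}$, so $T(\sigma)=\phi(\mathcal{A}(\infty))$ and $K(\sigma)=\phi(K(p))$. Since $\phi$ is a homeomorphism and $K(p)$ is connected and full, so is $K(\sigma)$. On $K(\sigma)$ the map $\sigma$ is conjugate to $p$ by $\phi$, and $\phi$ is conformal on $\interior K$. Since $p$ is hyperbolic, its finite critical values in $K(p)$ are attracted to attracting cycles, and this transfers to $\sigma$. Every critical value of $\sigma$ lies either in $T(\sigma)$ or in $\phi(K(p))$, so condition (3) of $\Sigma_d$ holds.

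The main obstacle is the gluing step. It must be checked carefully that $R$ defined by reflection is single-valued, continuous across $\mathbb{T}$, and of the correct degree $d$, and that $r\in\ratd$ is univalent on a neighborhood of $D$ rather than only on $D$. I would first try to obtain the latter from the analyticity of $\partial Q$ together with the univalence of $f$ near $\overline{A_{\sqrt{s}}}$.
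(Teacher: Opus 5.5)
Your outline follows the paper's: anti-holomorphy of $\sigma$ from $g$-invariance, $\sigma=\mathrm{id}$ on $\partial Q$, extending a Riemann map to a rational map by Schwarz reflection, and transferring the dynamical conditions through $\phi$. Your final paragraph on $K(\sigma)$ and critical values is fine.

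The gap is in the central step. You uniformize the wrong region, and as written $R$ is not defined. In your ``concrete'' version $R$ maps $\overline{\mathbb{D}^*}$ conformally onto $Q$. Then $R(z):=\sigma(R(1/\overline{z}))$ for $z\in\mathbb{D}$ requires evaluating $\sigma$ at $R(1/\overline{z})\in Q^\circ$, which is exactly where $\sigma$ is not defined. The claims that follow are inconsistent with this choice:
\begin{itemize}
\item nothing makes $R(\mathbb{D})=\Omega$;
\item the identity $\sigma=R\circ\rho\circ R^{-1}$ needs $R$ to be injective on the disc that is mapped onto $\Omega$;
\item a map univalent on $D$ with $R(D)=Q$ would give a Schwarz reflection of $Q$, not of $\Omega$, and no M\"obius relabeling of the two discs changes which set is the univalent image.
\end{itemize}
Your earlier explicit formula does not repair this either. $M(z)=\sqrt{s}/z$ sends $\{1<|z|<1/\sqrt{s}\}$ into $\{s<|w|<\sqrt{s}\}$, not into $A_{\sqrt{s}}$. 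There $f$ is the non-injective reflected branch, whose image is all of $\mathbb{D}$. On most of that image $\phi\circ\psi^{-1}$ is not conformal, since the coefficient $h^*(0)$ on the annulus of Construction~\ref{section5const} and its pullbacks are nonzero in general. It is also not a conjugacy from $\sigma_f$ to $\sigma$ there; it is one only over the disc of radius $r$ from that construction. Finally, the extension rule $R:=\sigma\circ R\circ\rho$ is circular.

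The repair, which is exactly the paper's proof, is to uniformize the domain of $\sigma$ rather than the tile. Let $R:\mathbb{D}^*\to\Omega$ be a Riemann map and set $R(z):=\sigma(R(1/\overline{z}))$ for $z\in\mathbb{D}$. Now $R(1/\overline{z})\in\Omega$, so this is defined and holomorphic on $\mathbb{D}$. The two definitions agree on $\mathbb{T}$ because $\sigma|_{\partial Q}=\mathrm{id}$, so $R$ is rational. Moreover $\sigma=R\circ\rho\circ R^{-1}$ on $R(\mathbb{D}^*)=\Omega$, that is, $\sigma=\sigma_{R,\overline{\mathbb{D}^*}}$. Univalence on a neighborhood of $\overline{\mathbb{D}^*}$ then follows from analyticity of $\partial\Omega$, as you say.

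Your degree count must be redone in this setup. A point of $Q^\circ$ has \emph{no} $R$-preimage in $\mathbb{D}^*$ and exactly $d$ preimages in $\mathbb{D}$, namely its $\sigma$-preimages. You count these by transporting $\deg(\sigma_f:A_1\to A_0)=d$ (Proposition~\ref{degreesigmafstatement}) through the conformal conjugacy $\psi\circ\phi^{-1}$ near the tile. Your ``one in $D$ plus $d-1$'' mixes the count for points of $Q$ with the count for generic points of $\Omega$, which do have $1+(d-1)$ preimages. It only appeared to give $d$ because the roles of $Q$ and $\Omega$ were swapped.
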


\begin{proof} First note that by $g$-invariance of $\mu$, the map $\sigma$ is anti-holomorphic on its domain of definition. Moreover, $G$ is the identity on $\partial T^0(\sigma_f)$, and so $g$ is the identity on $\psi^{-1}(\partial T^0(\sigma_f))$, and so in turn we have that $\sigma$ is the identity on $\phi\circ\psi^{-1}(\partial T^0(\sigma_f))$.

Let $r: \mathbb{D}^* \rightarrow  \Chat\setminus \phi\circ\psi^{-1}( T^0(\sigma_f))$ be a Riemann map normalized so that $r(\infty)=\infty$. We apply the Schwarz reflection principle to extend $r$ to a self-map of $\Chat$: 
\[  r(z):=     \sigma\circ r\left(\frac{1}{\overline{z}}\right)  \textrm{ for } z\in\mathbb{D}.  \]
Since $\sigma$ is anti-holomorphic, the map $r$ is a holomorphic self-map of $\Chat$ hence must be rational, and it is readily checked that $\deg(r)=d$. Since $\sigma$, $\sigma_r$ are both the identity on $\phi\circ\psi^{-1}(\partial T^0(\sigma_f))$, we conclude that $\sigma\equiv\sigma_r$. Since $p\in\overline{\poldmh}$ and $\phi$ conjugates $\sigma|_{K(\sigma)}$ to $p|_{K(p)}$ (and this conjugacy is conformal on the interior) it follows that the critical values of $\sigma$ in $K(\sigma)$ are attracted to attracting cycles of $\sigma$ (since $p$ has this property). Moreover, $r$ is conformal in a neighborhood of $\mathbb{D}^*$ since $f$ is conformal in a neighborhood of $\overline{A_{\sqrt{s}}}$. Lastly, since $\sigma=\sigma_{r}$ we have $K(\sigma)=K(\sigma_r)=\phi^{-1}(K(p))$ is connected and full since $K(p)$ is connected and full, and so $\sigma\in\Sigma_d$.
\end{proof}

\begin{thm}\label{conversemainthm} The map $\sigma$ is a conformal mating of $p$ with $\sigma_f$.
\end{thm}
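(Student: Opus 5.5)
We need to exhibit $\psi_p$ and $\psi_f$ satisfying (1), (2) and (A)--(C) of Definition \ref{conf_mat_defn}. All the maps will come from Construction \ref{section5const}: the B\"ottcher coordinate $\psi$ of $p$, the quasiconformal map $\phi$ solving the Beltrami equation for $\mu$, and the identity $\sigma=\phi\circ g\circ\phi^{-1}$.

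\textbf{The polynomial side.} We take $\psi_p:=\phi^{-1}$. We have $\mu=0$ on $K(p)$, so $\partial_{\overline{z}}\psi_p=0$ a.e. on $\phi(K(p))$. Next we identify the dynamical sets. A point of $\mathcal{A}(\infty)$ is carried by $\psi$ to $\mathbb{D}$. There the $G$-orbit is eventually $\sigma_f$-dynamics inside $r\mathbb{D}\setminus T^0(\sigma_f)$, because $\overline{z}^{d-1}$ and $h$ push points inward into $r\mathbb{D}$. By Theorem \ref{tilingset=disc} that orbit then lands in $T^0(\sigma_f)$. Conversely, $K(p)$ is invariant under $g=p$. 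Hence $\phi(\mathcal{A}(\infty))=T(\sigma)$, $\phi(K(p))=K(\sigma)$ and $\phi(\mathcal{J}(p))=\mathcal{J}(\sigma)$. Since $g=p$ only on $K(p)$ and not on a neighborhood of it, (A) needs one more step. We repeat the argument of Theorem \ref{p_prop}: we straighten $\sigma$ near $K(\sigma)$ to the hybrid class of $p$, giving a qc map $\tilde\psi_p$ with $\tilde\psi_p\circ\sigma=p\circ\tilde\psi_p$ on a neighborhood $W$ of $K(\sigma)$. We choose $\tilde\psi_p$ to agree with $\phi^{-1}$ on $K(\sigma)$, which is possible because two hybrid conjugacies between hyperbolic maps agree on the Julia set after normalization.

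\textbf{The escaping side.} We define $\psi_f$ on $T(\sigma)$ as the limit of pullbacks. On $\phi\circ\psi^{-1}(r\mathbb{D})$ we set $\psi_f:=\psi\circ\phi^{-1}$. This map is conformal there because $\mu=0$ on $\psi^{-1}(r\mathbb{D})$, and it conjugates $\sigma$ to $\sigma_f$ because $G=\sigma_f$ there. Every point of $T(\sigma)$ eventually enters $\phi\circ\psi^{-1}(r\mathbb{D})$, so we extend $\psi_f$ by pulling back, $\psi_f:=\sigma_f^{-1}\circ\psi_f\circ\sigma$, using the branch of $\sigma_f^{-1}$ determined by continuity. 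This is the same lift used for the Böttcher coordinate. The degrees match by Propositions \ref{dc_exist2} and \ref{degreesigmafstatement}, and critical values are avoided in the covering structure because $\sigma_f$ on $\mathbb{D}\setminus r\mathbb{D}$ is a degree-$(d-1)$ covering onto an annulus. The result is a conformal map $T(\sigma)\to\mathbb{D}$ with $\psi_f(T^0(\sigma))=T^0(\sigma_f)$, and (B) holds by construction. Alternatively, we can apply Theorem \ref{exist_of_f} to obtain a Riemann map $\psi_f$ with $\psi_f\circ\sigma\circ\psi_f^{-1}=\sigma_{f'}$ for some $f'$. We then show $\sigma_{f'}=\sigma_f$, since both are conformally conjugate on $\mathbb{D}$ near the tile and the conjugacy propagates.

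\textbf{Condition (C), the main obstacle.} Both $\psi_p\circ\psi_f^{-1}$ and $\beta\circ\overline{\mathcal{E}_f}$ send $\mathbb{T}$ to $\mathcal{J}(p)$, and both conjugate $\sigma_f|_{\mathbb{T}}$ to $p|_{\mathcal{J}(p)}$. To see that they are equal, we pull back via the Böttcher coordinate $\beta$; here $\beta^{-1}$ and $\psi$ differ only by inversion. This reduces (C) to the claim that the boundary extension of $\psi\circ\phi^{-1}\circ\psi_f^{-1}$ is a homeomorphism of $\mathbb{T}$ conjugating $\sigma_f$ to $\overline{z}^{d-1}$. That extension exists because $\mathcal{J}(\sigma)$ is locally connected and both maps are expanding. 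Such conjugacies are unique up to the $d$ fixed-point choices, and the normalization $\sigma_f(1)=1$ together with $\beta'(\infty)>0$ picks out $\mathcal{E}_f$, after a rotation if necessary, exactly as in the proof of Theorem \ref{mating_thm}. The hardest point is ensuring continuity of $\psi_f^{-1}$ up to $\mathbb{T}$, together with the matching. I would argue this by noting that $\psi\circ\phi^{-1}\circ\psi_f^{-1}$ is a qc self-map of an annulus near $\mathbb{T}$ commuting with the dynamics, so it extends homeomorphically to $\mathbb{T}$. Uniqueness up to Möbius conjugacy then follows from Theorem \ref{mating_thm}.
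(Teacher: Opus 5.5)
Your proposal is correct and follows essentially the paper's route: $\psi_p=\phi^{-1}$; $\psi_f$ obtained by pulling back the conformal conjugacy $\psi\circ\phi^{-1}$ on $\phi\circ\psi^{-1}(r\mathbb{D})$ under the degree-$(d-1)$ coverings near the boundary; and (C) from normalizing the B\"ottcher coordinate, using uniqueness of the orientation-preserving conjugacy of $\sigma_f|_{\mathbb{T}}$ to $\overline{z}^{d-1}$ fixing $1$. Your extra restraightening step for (A) is unnecessary, and its premise is false: since $G=\overline{z}^{d-1}$ on $\mathbb{D}\setminus\sqrt[d-1]{r}\,\mathbb{D}$ and $\psi$ conjugates $p$ to $\overline{z}^{d-1}$, one has $g=p$ on the open neighborhood $K(p)\cup\psi^{-1}(\{\sqrt[d-1]{r}<|z|<1\})$ of $K(p)$, so $\phi^{-1}$ already conjugates $\sigma$ to $p$ near $K(\sigma)$ (your rigidity argument for matching $\tilde\psi_p$ with $\phi^{-1}$ on $K(\sigma)$ would have needed exactly this); also, uniqueness of $\sigma$ is Theorem \ref{injectivity_statement}, not Theorem \ref{mating_thm}, and is not part of this statement.
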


\begin{proof} It is readily verified that $\psi_p:=\phi^{-1}$ is the desired conjugacy between $\sigma|_{K(\sigma)}$ and $p|_{K(p)}$. 

Let us now show that $\sigma$ is conjugate to $\sigma_f$ on $T(\sigma)$. First note that since $\mu=0$ in $r\mathbb{D}$, we have $\Phi:=\psi\circ\phi^{-1}: \phi\circ\psi^{-1}(r\mathbb{D}) \rightarrow r\mathbb{D}$ is a conformal conjugacy between $\sigma$ and $\sigma_f$. In $T(\sigma)\setminus\phi\circ\psi^{-1}(r\mathbb{D})$ (resp. $\{z : r<|z|<1\}$) the map $\sigma$ (resp. $\sigma_f$) is a covering map, and so the conformal conjugacy $\Phi$ may be pulled back under iterates of $\sigma_f$, $\sigma$ to yield a full conjugacy between $\sigma|_{T(\sigma)}$ and $\sigma_f|_{\mathbb{D}}$ (recall Theorem \ref{tilingset=disc}). 

After normalizing the $\mathbb{D}$-valued B\"ottcher coordinate in Construction \ref{section5const} appropriately, Condition (C) in Definition \ref{conf_mat_defn} is also readily verified since then $\beta^{-1}\circ\psi_p\circ\psi_f^{-1}$ is a conjugacy between $\sigma_f|_{\mathbb{T}}$ and $\overline{z}^{d-1}$ fixing $1$. 
\end{proof}

\noindent Let us now move on to the question of uniqueness of $\sigma$.

\begin{thm}\label{injectivity_statement} Suppose two Schwarz reflections $\sigma$, $\tilde{\sigma}$ are both conformal matings of the same $p$, $\sigma_f$. Then $\sigma=M\circ \tilde{\sigma}\circ M^{-1}$ for some M\"obius $M$.
\end{thm}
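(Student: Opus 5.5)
The plan is to build a global homeomorphism $M:\Chat\rightarrow\Chat$ conjugating $\tilde{\sigma}$ to $\sigma$ by gluing together the two pieces of mating data, and then to show that $M$ is conformal. Let $(\psi_p,\psi_f)$ and $(\tilde\psi_p,\tilde\psi_f)$ be the mating data of Definition \ref{conf_mat_defn} for $\sigma$ and $\tilde\sigma$ respectively. Define
\[ M:=\begin{cases} \psi_p^{-1}\circ\tilde\psi_p & \textrm{on } K(\tilde\sigma),\\ \psi_f^{-1}\circ\tilde\psi_f & \textrm{on } T(\tilde\sigma).\end{cases} \]
By Conditions (A) and (B), each piece conjugates $\tilde\sigma$ to $\sigma$ on its respective domain. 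Moreover, $M(T^0(\tilde\sigma))=T^0(\sigma)$, because both $\psi_f$ and $\tilde\psi_f$ send the tile to $T^0(\sigma_f)$.

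The first key step is continuity of $M$ across $\mathcal{J}(\tilde\sigma)=\partial T(\tilde\sigma)$ (Corollary \ref{Jordan_curve_prop}). By Theorem \ref{respectivecount} the Julia sets are locally connected, so the Riemann maps $\psi_f^{-1}$ and $\tilde\psi_f^{-1}$ extend continuously to $\mathbb{T}$. Condition (C) gives, on $\mathbb{T}$,
\[ \psi_p\circ\psi_f^{-1}=\beta\circ\overline{\mathcal{E}_f}=\tilde\psi_p\circ\tilde\psi_f^{-1}, \]
and the right-hand side depends only on $p$ and $\sigma_f$. For $\zeta\in\mathbb{T}$ this yields $\psi_p^{-1}\tilde\psi_p(\tilde\psi_f^{-1}(\zeta))=\psi_f^{-1}(\zeta)$, so the two definitions agree on $\mathcal{J}(\tilde\sigma)$ in the sense of boundary limits. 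Hence $M$ is a homeomorphism of $\Chat$: it is bijective, continuous, and $\Chat$ is compact. It also conjugates $\tilde\sigma$ to $\sigma$ wherever $\tilde\sigma$ is defined.

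The second step is to show that $M$ is conformal. On $T(\tilde\sigma)$, $M$ is a composition of conformal maps. On a neighborhood of $K(\tilde\sigma)$, $\psi_p^{-1}\circ\tilde\psi_p$ is quasiconformal with $\partial_{\bar z}=0$ a.e.\ on $K(\tilde\sigma)$. The obstacle is that the glued map is a priori only a homeomorphism near $\mathcal{J}$, so quasiconformality there must be checked.

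I would handle this by showing that $M$ agrees, on the complement of a neighborhood of $K(\tilde\sigma)$, with the quasiconformal map $\psi_p^{-1}\circ\tilde\psi_p$ away from the gluing. An alternative route is to use removability. By Proposition \ref{hyperbolicity} and Theorem \ref{respectivecount}, $\mathcal{J}(\tilde\sigma)$ is quasiconformally equivalent to the Julia set of a hyperbolic polynomial, which is a conformally removable set (a John-domain complement boundary, or simply a set of measure zero that is a quasicircle-like boundary of a John domain). A homeomorphism of $\Chat$ that is quasiconformal off such a set is therefore quasiconformal. Concretely, I would first show that $M$ is quasiconformal near $\mathcal{J}$ by a pullback argument. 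Near $\mathcal{J}(\tilde\sigma)$, define $M_0:=\psi_p^{-1}\circ\tilde\psi_p$, which is quasiconformal on $U$ and already a conjugacy. The discrepancy $M_0^{-1}\circ M$ on $T(\tilde\sigma)\cap U$ is the identity on $\mathcal{J}$ and commutes with $\tilde\sigma$. Using the uniform expansion of Proposition \ref{hyperbolicity}, iterated pullbacks of a fundamental annulus show that $M$ is quasiconformal with dilatation bounded by that of $M_0$ on a fundamental domain, because $\tilde\sigma$ is anti-conformal and dilatation is preserved under pullback.

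Once $M$ is known to be quasiconformal on $\Chat$, the conclusion is immediate. Its Beltrami coefficient vanishes a.e.\ on $T(\tilde\sigma)$, where $M$ is conformal, and a.e.\ on $K(\tilde\sigma)$ by Condition (1). So $M$ is $1$-quasiconformal, hence Möbius by Weyl's lemma, and $\sigma=M\circ\tilde\sigma\circ M^{-1}$. The main obstacle is the quasiconformality of the glued map across the Julia set. I would try the removability of hyperbolic Julia sets (boundaries of John domains, via Jones' theorem) first, since that bypasses the pullback estimates entirely.
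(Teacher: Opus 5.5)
Your proposal is correct and matches the paper's proof: the same piecewise gluing of $\psi_p^{-1}\circ\tilde\psi_p$ and $\psi_f^{-1}\circ\tilde\psi_f$, continuity across the Julia set from Condition (C), and conformality via removability of $\mathcal{J}$ as a quasiconformal image of a hyperbolic polynomial Julia set (your John-domain and Jones citation supplies the justification the paper leaves implicit). The pullback alternative is unnecessary and, as written, muddled: $\psi_p^{-1}\circ\tilde\psi_p$ is not defined far from $K(\tilde\sigma)$, and $M$ is already conformal on $T(\tilde\sigma)$, so the only issue is regularity across $\mathcal{J}$; simply commit to the removability route.
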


\begin{proof} Let $\psi_p$, $\psi_f$ conjugate $\sigma$ to $p$, $\sigma_f$ and let $\tilde{\psi}_p$, $\tilde{\psi}_f$ conjugate $\tilde{\sigma}$ to $p$, $\sigma_f$. Consider the piecewise-defined
\begin{equation}\label{Mdefnn} M:= \begin{cases} \tilde{\psi}_p^{-1} \circ \psi_p  \textrm{ in }  K(\sigma)    \\     \tilde{\psi}_f^{-1} \circ \psi_f    \textrm{ in }  T(\sigma).     \end{cases} 
\end{equation}
The common boundary of $K(\sigma)$, $T(\sigma)$ is $\mathcal{J}(\sigma)$ (see Corollary \ref{Jordan_curve_prop}); let us show the two piecewise-definitions agree across $\mathcal{J}(\sigma)$, or, equivalently, we need to show that
\begin{equation}\label{wts==} \tilde{\psi}_p\circ \tilde{\psi}_f^{-1} = \psi_p\circ \psi_f^{-1} \textrm{ on } \mathbb{T}. \end{equation}
Well (\ref{wts==}) holds according to the Definition \ref{conf_mat_defn} of conformal mating, and thus we conclude that $M$ is continuous across $\mathcal{J}(\sigma)$. Note that $\mathcal{J}(\sigma)$ is removable since $\mathcal{J}(\sigma)$ is quasiconformally homeomorphic to $\mathcal{J}(p)$, and $\mathcal{J}(p)$ is removable since $p$ is hyperbolic. Thus $M$ is conformal across $\mathcal{J}(\sigma)$ and hence a M\"obius transformation. From (\ref{Mdefnn}) we see that $M$ conjugates $\sigma$, $\tilde{\sigma}$ as needed.
 \end{proof}

\noindent Together, Theorems \ref{conversemainthm} and \ref{injectivity_statement} imply Theorem \ref{mating_thm2}.


\section{Degeneration}\label{degen_sec}

\begin{notation}\label{degeneration_section_setup} Let $r\in\ratd$ be univalent in a Euclidean disc $D$ and assume $r$ has $d$ critical points on $\partial D$. Following \cite{MR4706575}, the \emph{desingularized} droplet is defined to be 
\[ T^0(r):= \Chat\setminus \left[r(\interior(D))\cup r(\CP(r)\cap\partial D)\right], \]
and the \emph{Tiling set} of $\sigma_r$ is defined to be $T(\sigma_r):=\cup_{n=0}^\infty\sigma^{-n}(T^0(r))$ (we remark that $T(\sigma_r)$ is open: see for instance Proposition 4.6 of \cite{MR4706575}). The \emph{Filled Julia set} of $\sigma_r$ is defined to be $K(\sigma):=\Chat\setminus T(\sigma_r)$. We will assume that $\sigma_r: K(\sigma) \rightarrow K(\sigma)$ is conjugate to a hyperbolic polynomial, and the conjugacy is conformal on $\interior K(\sigma)$.  
\end{notation}

\begin{notation}\label{degeneration_section_setup2} Let $(r_n)_{n=1}^\infty\in\ratd$ be a sequence so that $r_n\rightarrow r$ uniformly on $\Chat$ with respect to the spherical metric, and each $r_n$ is univalent in a neighborhood of $D$ and moreover $\sigma_{r_n}\in\Sigma_d$ (whereas $\sigma_r\not\in\Sigma_d$). We will abbreviate $\sigma:=\sigma_r$ and $\sigma_n:=\sigma_{r_n}$
\end{notation}

\begin{example} The maps 
\[ r(z):=z+\frac{1}{(d-1)z^{d-1}}\textrm{, and } r_n(z):=z+\frac{1}{(d-1+\frac{1}{n})z^{d-1}} \]
satisfy the conditions in Notation \ref{degeneration_section_setup2}. 
\end{example}

\noindent We will prove that the tiling sets $T(\sigma_{n})$ converge to $\interior T(\sigma)$ in the Carath\'eodory sense; let us first recall the notion of Carath\'eodory convergence.


\begin{definition}\label{kerneldefn} Let $z_0\in\Chat$, and $\Omega_n$ be a sequence of domains all containing $z_0$. Consider 
\[ E:=\{z :  \textrm{ there exists a neighborhood } U \ni z \textrm{ so that } U \subset \Omega_n \textrm{ for all large } n\}.\] 
The \emph{kernel} of $\Omega_n$ with respect to $z_0$ is defined to be the connected component of $E$ containing $z_0$. We say that a sequence of domains $\Omega_n$ converges to a domain $\Omega$ in the \emph{Carath\'eodory sense} if $\Omega$ is the kernel of every subsequence of $\Omega_n$.
\end{definition}

\begin{rem} The statement of Theorem \ref{reconciliation_thm} assumes that $\interior T^0(\sigma)$ is connected; this need not be the case in general. Indeed, if $\sigma=\sigma_r$ with $r(\partial D)$ containing double points (meaning $r(\partial D)$ is not a Jordan curve), then $\interior T^0(\sigma)$ is disconnected. The presence of double points presents no serious conceptual difficulty in the proof of Theorem \ref{reconciliation_thm}, but would require a separate analysis and statement.  
\end{rem}

\begin{rem} We will henceforth fix a point $z_0$ so that $z_0\in \interior T^0(\sigma)\cap T^0(\sigma_n)$ for all large $n$. 
\end{rem}

\begin{thm}\label{tilingsetconv} The tiling sets $T(\sigma_{n})$ converge to $T(\sigma)$ in the Carath\'eodory sense. 
\end{thm}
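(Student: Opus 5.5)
We prove Carathéodory convergence by checking two inclusions for an arbitrary subsequence: \emph{(i)} every compact subset of $T(\sigma)$ lies in $T(\sigma_n)$ for all large $n$, and \emph{(ii)} if $\Omega$ is a domain containing $z_0$ with $\Omega\subset T(\sigma_n)$ for infinitely many $n$ (locally uniformly), then $\Omega\subset T(\sigma)$. Together these show that $T(\sigma)$ is the kernel of every subsequence. Throughout we use that $\sigma_n=r_n\circ\rho\circ r_n^{-1}\to\sigma$ locally uniformly on compact subsets of $r(\interior D)$. This holds because $r_n\to r$ uniformly, $r$ is univalent on $\interior D$, and Hurwitz's theorem then gives convergence of the $D$-valued inverse branches on compacta of $r(\interior D)$.

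\emph{Step (i).} Let $K\subset T(\sigma)$ be compact. Since $T(\sigma)=\bigcup_k\sigma^{-k}(T^0(r))$ is open (Proposition 4.6 of \cite{MR4706575}), each $z\in K$ has some $k$ with $\sigma^k(z)\in T^0(r)$. The difficulty is that $T^0(r)$ is not open at the cusps $r(\CP(r)\cap\partial D)$. However, orbits entering $T(\sigma)$ near a cusp can be pushed one more step, or replaced by a point landing in $\interior T^0(r)$. So we may assume $\sigma^k$ maps a neighborhood of $z$ into $\interior T^0(r)$, with all intermediate iterates in compacta of $r(\interior D)$. By compactness of $K$, finitely many such neighborhoods and a uniform $k$ suffice. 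Uniform convergence $\sigma_n^j\to\sigma^j$ on these neighborhoods, together with $\interior T^0(r)\subset T^0(\sigma_n)$ eventually on compacta (because $r_n(\partial D)\to r(\partial D)$ uniformly), then gives $K\subset T(\sigma_n)$ for large $n$.

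\emph{Step (ii).} This is the main obstacle. Suppose some $w\in\Omega\cap K(\sigma)$. We must show that points near $w$ escape for $\sigma$ only if they are far from escaping for $\sigma_n$, which requires control near $\mathcal{J}(\sigma)$, where $\sigma$ has parabolic points. We would first treat $w\in\interior K(\sigma)$. There the hypothesis gives a conformal conjugacy to a hyperbolic polynomial, so $w$ is attracted to an attracting cycle of $\sigma$. The attracting cycle persists for $\sigma_n$, with a trapping disc $U$ satisfying $\sigma_n^m(\overline U)\subset U$ and $U\cap T^0(\sigma_n)=\emptyset$, by uniform convergence. Hence a neighborhood of $w$ lies in $K(\sigma_n)$ for large $n$, contradicting $\Omega\subset T(\sigma_n)$.

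If instead $w\in\mathcal{J}(\sigma)=\partial K(\sigma)$, then $\Omega$, being open, also meets $\interior K(\sigma)$. Indeed, $\mathcal{J}(\sigma)$ is the boundary of the Fatou components of the hyperbolic polynomial model, so every point of it is a limit of interior points; this uses that $\interior K(\sigma)\neq\emptyset$, which follows from the attracting cycles of the hyperbolic polynomial. That reduces this case to the previous one. Hence $\Omega\cap K(\sigma)=\emptyset$, so $\Omega\subset T(\sigma)$.

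Finally, $\Omega$ must lie in the component of $T(\sigma)$ containing $z_0$. We would conclude using the assumption that $\interior T^0(\sigma)$ is connected: it implies $T(\sigma)$ is connected (compare Corollary \ref{tilingsetopen}), possibly after passing to $\interior T(\sigma)$ as in the statement preceding the definition of the kernel. The delicate point to verify carefully is the claim in Step (i) about orbits near the cusps, where $\sigma_n$ and $\sigma$ are not uniformly close.
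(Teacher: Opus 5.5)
Your overall route is the paper's: $T(\sigma)\subset E$ by following orbits into the tile, and $E\subset T(\sigma)$ by persistence of attracting cycles on $\interior K(\sigma)$ together with density of $\interior K(\sigma)$ at $\mathcal{J}(\sigma)$. Your Step (ii) is that argument with trapping discs and subsequences made explicit, and it is fine.

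\textbf{The gap is in Step (i).} The reduction ``we may assume $\sigma^k$ maps a neighborhood of $z$ into $\interior T^0(r)$'' is false, and the difficulty is misplaced. The cusps are harmless. They are excluded from $T^0(r)$, fixed by $\sigma$, and lie in $K(\sigma)$. So the finitely many orbit segments you use for a compact $K\subset T(\sigma)$ stay a definite distance from them. What actually fails is openness of $T^0(r)$ along all of $r(\partial D)\setminus r(\CP(r)\cap\partial D)$. A point $w$ there lies in $T^0(r)\subset T(\sigma)$, but every neighborhood of $w$ contains points of $r(\partial D)$, which $\sigma$ fixes. So no iterate carries a neighborhood of $w$ into $\interior T^0(r)$. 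The same holds for a neighborhood of any $z$ whose orbit lands at such a point, and ``pushing one more step'' does not help.

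\textbf{The fix is what the paper uses.} Replace $\interior T^0(\sigma)$ by $B_1(\sigma)=\interior\bigl(T^0(\sigma)\cup\sigma^{-1}(T^0(\sigma))\bigr)$, which contains $T^0(r)$, and check that it is stable under the perturbation.
\begin{itemize}
\item For a non-cusp $w\in r(\partial D)$ one has $r'(r^{-1}(w))\neq 0$. By Hurwitz, $r_n$ is univalent on a fixed disc about $r^{-1}(w)$ for all large $n$.
\item Hence, on a fixed small neighborhood $N$ of $w$, the map $\sigma_n$ is the local reflection across the arc $r_n(\partial D)\cap N$. This uses that $r(D)$ stays away from $w$ outside that disc, since $r(\partial D)$ has no double points.
\item So every point of $N$ either lies in $T^0(\sigma_n)$, or lies in $r_n(D)$ and is reflected into $T^0(\sigma_n)$. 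That is, $N\subset T^0(\sigma_n)\cup\sigma_n^{-1}(T^0(\sigma_n))$ for large $n$.
\end{itemize}
With this in hand your compactness argument works. Take $k$ minimal, so $\sigma^j(z)\in r(\interior D)$ for $j<k$. Choose a neighborhood $V$ of $z$ with $\sigma^j(\overline V)$ a compact subset of $r(\interior D)$ for $j<k$. Require also that $\sigma^k(\overline V)$ be compactly contained either in $\interior T^0(r)$ or in such an $N$. Uniform convergence of $\sigma_n^j$ on $\overline V$ then gives $V\subset T(\sigma_n)$ for large $n$.

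\textbf{A smaller point.} You need $T(\sigma)$ connected to identify the kernel with $T(\sigma)$. That should come from the mating hypothesis of Theorem \ref{reconciliation_thm}, which makes $T(\sigma)$ conformally a disc, not from connectedness of $\interior T^0(\sigma)$.
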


\begin{proof} Let $\Omega_n:=T(\sigma_{n})$. Consider $E$ as in Definition \ref{kerneldefn}. We will show that $E=T(\sigma)$.

First let us first show that $T(\sigma)\subset E$. Let $z\in T(\sigma)$. Thus $\sigma^k(z)\in T^0(\sigma)$ for some $k$. Thus either  $\sigma^k(z)\in\interior T^0(\sigma)$ or $\sigma^k(z)\in T^0(\sigma)\setminus \interior T^0(\sigma)$. Suppose first that $\sigma^k(z)\in\interior T^0(\sigma)$. Since $r_n\rightarrow r$, we have that $\sigma^k(z)\in\interior T^0(\sigma_n)$ for sufficiently large $n$. Thus since $r_n\rightarrow r$, we have $\sigma_n^k(z)\in\interior T^0(\sigma_n)$ for sufficiently large $n$, and so $z\in E$. Now suppose $\sigma^k(z)\in T^0(\sigma)\setminus \interior T^0(\sigma)$. Then $\sigma^k(z)\in B_1(\sigma):=\interior(T^0(\sigma)\cup T^1(\sigma))$. Again since $r_n\rightarrow r$, we have that $\sigma^k(z)\in B_1(\sigma_n):=\interior(T^0(\sigma_n)\cup T^1(\sigma_n))$ for sufficiently large $n$. Thus since $r_n\rightarrow r$, we have $\sigma_n^k(z)\in B_1(\sigma_n)$ for sufficiently large $n$ and so $z\in E$.

Let us now prove $E\subset T(\sigma)$. We will show the contrapositive, namely we will assume that $z\not\in T(\sigma)$ and show that this implies $z\not \in E$. Since $z\not\in T(\sigma)$, either $z\in\partial T(\sigma)$ or $z\in\interior(K(\sigma))$. Let us first consider the case $z\in\interior K(\sigma)$. Recall we have assumed in Notation \ref{degeneration_section_setup} that $\sigma: \interior K(\sigma) \rightarrow \interior K(\sigma)$ is conjugate to a hyperbolic polynomial on its filled Julia set. Thus there exists an attracting cycle of $\sigma$ to which $z$ is attracted. Since $r_n\rightarrow r$, we also have that $\sigma_n(z)$ is attracted to an attracting cycle of $\sigma_n$ for large enough $n$. In other words $z\not\in T(\sigma_{n})$ for large $n$ and so $z\not \in E$.  Now assume $z\not\in T(\sigma)$ but $z\in\partial T(\sigma)$. Then any neighborhood $U$ of $z$ contains points $w\in\interior K(\sigma)$, so applying the same arguments as above to $w$ shows that $U$ contains points in $\interior K(\sigma_n)$. Thus $U\not\subset T(\sigma_{n})$ for all large $n$. Since $U$ was arbitrary, this shows $z\not\in E$.
\end{proof}

\begin{notation} Recalling $T(\sigma)$, $T(\sigma_n)$ are all open and simply connected, we denote by $\psi: \mathbb{D} \rightarrow T(\sigma)$ and $\psi_n: \mathbb{D}\rightarrow T(\sigma_{n})$ the unique conformal mappings which map $0$ to $z_0$ and with positive derivative at $0$.
\end{notation}

\begin{thm}\label{conj_conv} The maps $\psi_n$ converge uniformly on compact subsets of $\mathbb{D}$ to $\psi$, and the inverse maps $\psi_n^{-1}$ converge uniformly on compact subsets of $T(\sigma)$ to $\psi^{-1}$.
\end{thm}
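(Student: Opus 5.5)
This is a direct application of the Carathéodory kernel convergence theorem to the Carathéodory convergence established in Theorem \ref{tilingsetconv}. Recall the classical statement. Let $\Omega_n$ be simply connected domains containing $z_0$, each different from the whole plane (or sphere). Let $\psi_n:\mathbb{D}\to\Omega_n$ be the Riemann maps normalized by $\psi_n(0)=z_0$ and $\psi_n'(0)>0$. Then $\psi_n\to\psi$ locally uniformly on $\mathbb{D}$, with $\psi$ univalent, if and only if $\Omega_n\to\Omega:=\psi(\mathbb{D})$ in the Carathéodory sense with respect to $z_0$. In that case $\psi_n^{-1}\to\psi^{-1}$ locally uniformly on $\Omega$.

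The plan has three steps.

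First, I check the hypotheses. By Corollary \ref{tilingsetopen}, each $T(\sigma_n)$ is open and simply connected. For the limit map, $T(\sigma)$ is open by the cited Proposition 4.6 of \cite{MR4706575}. It is simply connected because $\sigma$ is a mating with a Nielsen map, so $T(\sigma)$ is conformally a disc; alternatively, simple connectivity can be inherited as the kernel of simply connected domains.

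The kernel theorem is stated in $\mathbb{C}$, so I normalize coordinates. After a Möbius change of coordinates, which conjugates everything and preserves Carathéodory convergence, I place a point of $\interior K(\sigma)$ at $\infty$. Such a point lies in $\interior K(\sigma_n)$ for all large $n$, by the attracting-cycle argument in the proof of Theorem \ref{tilingsetconv}. Hence all $T(\sigma_n)$ and $T(\sigma)$ are proper subdomains of $\mathbb{C}$, and $z_0\in T(\sigma_n)$ for all large $n$ by the remark fixing $z_0$.

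Second, I invoke Theorem \ref{tilingsetconv}: $T(\sigma)$ is the kernel of every subsequence of $T(\sigma_n)$. The kernel theorem then gives $\psi_n\to\psi$ locally uniformly on $\mathbb{D}$, where $\psi$ is the normalized Riemann map of $T(\sigma)$.

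To be safe, I would also run the standard argument directly rather than only cite it. The family $\{\psi_n\}$ is normal: a disc of fixed radius about $z_0$ lies in every $T(\sigma_n)$ for large $n$, and a point fixed in the complement gives Koebe-type bounds. Any subsequential limit is nonconstant, since the Koebe lower bound on $\psi_n'(0)$ comes from that fixed disc. By Hurwitz's theorem each limit is univalent, and its image must equal the kernel. Uniqueness of the normalized Riemann map then forces every subsequential limit to equal $\psi$.

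Third, for the inverses I fix a compact $K\subset T(\sigma)$, enlarge it to a compact connected set containing $z_0$, and take a slightly larger compact subset of $T(\sigma)$. By the kernel property, $K\subset T(\sigma_n)$ for large $n$, so $\psi_n^{-1}$ is defined on $K$. The maps $\psi_n^{-1}$ take values in $\mathbb{D}$, so they are bounded and form a normal family on $K$. Any limit $\chi$ satisfies $\psi\circ\chi=\mathrm{id}$. To see this, write $\psi_n(\psi_n^{-1}(w))=w$ and use locally uniform convergence of $\psi_n$. The only issue is that $\psi_n^{-1}(K)$ must stay in a fixed compact subset of $\mathbb{D}$. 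I expect this to be the main (though minor) obstacle. I would handle it by the standard contradiction argument: if $\psi_n^{-1}(w_n)\to\partial\mathbb{D}$ with $w_n\to w\in K$, then applying Koebe distortion to $\psi_n^{-1}$ near $w$ contradicts the bound $\mathrm{dist}(w,\partial T(\sigma_n))\geq\delta$, which holds uniformly for large $n$. Hence $\chi=\psi^{-1}$ on $K$, and the full sequence converges.
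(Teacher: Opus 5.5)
Your proposal is correct and takes the same approach as the paper, whose proof simply combines Theorem \ref{tilingsetconv} with the Carath\'eodory kernel convergence theorem; placing a persistent point of $\interior K(\sigma)$ at $\infty$ and sketching the kernel theorem only fills in details the paper leaves implicit. One small point in your inverse-map step: a Koebe estimate at $w$ alone gives no contradiction, so either chain it back to $z_0$ along your connected compact set (equivalently, bound the hyperbolic distance from $z_0$ to $w_n$ in $T(\sigma_n)$), or more simply use Rouch\'e's theorem to get $K\subset\psi_n(\{|\zeta|<\rho'\})$ for all large $n$.
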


\begin{proof} This follows from Theorem \ref{tilingsetconv} and the Carath\'eodory Kernel Convergence theorem. 
\end{proof}

\begin{notation} We set 
\[ D_n:= \mathbb{D}\setminus\psi_n^{-1}(T^0(\sigma_n)) \textrm{ and } \sigma_{f_n}:=  \psi_n^{-1} \circ \sigma_n  \circ \psi_n : D_n\rightarrow \mathbb{D}, \]
and 
\[ D:= \mathbb{D}\setminus\psi^{-1}(T^0(\sigma)), \textrm{ and } \rho:=  \psi^{-1} \circ \sigma  \circ \psi : D\rightarrow \mathbb{D} \]
\end{notation}

\begin{rem} By pre-composing $\psi$ by a rotation, we may assume we have $\rho(1)=1$, and so pre-composing the $\psi_n$ by rotations we may ensure that $\sigma_{f_n}(1)=1$ for all $n$ and still the convergence of Theorem \ref{conj_conv} holds.
\end{rem}

\begin{prop} We have $\sigma_{f_n}\in\Sigma_d^\mathcal{F}$, and $\rho$ is a reflection map for some circle reflection group as in Definition \ref{necklace}. Moreover, $\sigma_n$ is the conformal mating of $\sigma_{f_n}$ with a hyperbolic polynomial, and $\sigma$ is the conformal mating of $\rho$ with a hyperbolic polynomial. 
\end{prop}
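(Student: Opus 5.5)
The four claims split into two groups. The statements about $\sigma_n$ follow from earlier results. The statements about $\sigma$ will be imported from the non-hyperbolic literature, matched to our normalization.

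For $\sigma_{f_n}$: each $\sigma_n\in\Sigma_d$ by Notation \ref{degeneration_section_setup2}, and $\psi_n^{-1}$ is a Riemann map of $T(\sigma_n)$ onto $\mathbb{D}$. Theorem \ref{exist_of_f} then gives some $f\in\mathcal{F}_d$ with $\psi_n^{-1}\circ\sigma_n\circ\psi_n=\sigma_f$. The proof of that theorem builds $f$ from the conformal map of $A_{\sqrt{s}}$ onto the annulus bounded by $\mathbb{T}$ and $\psi_n^{-1}(\partial T^0(\sigma_n))$, extended by reflection. The only normalization it uses is a post-rotation. Our $\psi_n$ were pre-rotated so that $\sigma_{f_n}(1)=1$, so $\sigma_{f_n}\in\Sigma_d^\mathcal{F}$. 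The mating statement for $\sigma_n$ is then the proof of Theorem \ref{mating_thm} with $\psi_f:=\psi_n^{-1}$. Condition (A) comes from Theorem \ref{p_prop}, condition (B) holds by construction, and condition (C) follows after rotating the B\"ottcher coordinate so that the induced circle conjugacy fixes $1$.

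For $\sigma$: by Notation \ref{degeneration_section_setup}, $r$ is univalent on $\interior D$ with $d$ critical points on $\partial D$. By the standing hypothesis, $\interior T^0(\sigma)$ is connected, so $T^0(\sigma)$ is a $d$-cusped Jordan region with no double points. In this setting the cited works \cite{MR4706575}, \cite{MR4961627} show two things:
\begin{enumerate}
\item the escaping dynamics $\sigma|_{T(\sigma)\setminus T^0(\sigma)}$ are conformally conjugate to a Nielsen map of an ideal $d$-gon reflection group as in Definition \ref{necklace};
\item together with the assumed hyperbolic-polynomial conjugacy on $K(\sigma)$, this yields the conformal mating.
\end{enumerate}

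The remaining point is that this conjugacy coincides with our $\psi^{-1}$ up to a M\"obius automorphism of $\mathbb{D}$. Let $\Phi:T(\sigma)\to\mathbb{D}$ denote the Riemann map from the literature that conjugates $\sigma$ to a Nielsen map $\rho_0$. Then $\Phi\circ\psi=M$ is a disc automorphism, and $\rho=M^{-1}\circ\rho_0\circ M$. The class of Nielsen maps is invariant under conjugation by disc automorphisms: $M^{-1}$ sends the circles $C_i$ orthogonal to $\mathbb{T}$ to circles orthogonal to $\mathbb{T}$, and preserves tangency, disjointness of interiors and the anti-conformal reflections. So $\rho$ is again a Nielsen map of the form in Definition \ref{necklace}.

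I expect this normalization step to be the main obstacle. It needs a few checks:
\begin{itemize}
\item No image circle passes through $\infty$. This can be arranged because $M^{-1}$ maps the disc into itself and the relevant arcs meet $\mathbb{T}$.
\item The cited mating definitions use the same tile $T^0$, namely the desingularized droplet with the cusps removed, as our Notation \ref{degeneration_section_setup}.
\end{itemize}
If the literature statement is not directly in this form, I would fall back on a direct argument. The map $\psi^{-1}$ sends the $d$ boundary arcs of $T^0(\sigma)$, which are fixed by $\sigma$, to $d$ arcs in $\mathbb{D}$. These arcs meet at points of $\mathbb{T}$ corresponding to the cusps. Since $\rho$ fixes each arc pointwise and is anti-conformal, each arc is real-analytic and $\rho$ is the Schwarz reflection across it. A degree count for $\rho$ on each component of $\mathbb{D}\setminus T^0$, using Proposition \ref{dc_exist2}, shows that each such reflection is a univalent anti-conformal involution of $\mathbb{D}$. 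It is therefore a hyperbolic reflection, and the arcs are geodesics forming an ideal polygon.
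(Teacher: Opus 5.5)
Your main route is the paper's own argument. The paper gets $\sigma_{f_n}\in\Sigma_d^\mathcal{F}$ directly from Theorem \ref{exist_of_f}, and the mating statements from the definitions and Theorem \ref{mating_thm}. It takes the Nielsen structure of $\rho$ from \cite{MR4706575}, \cite{MR4961627}, with a one-line sketch: map $T^0(\sigma)$ conformally onto an ideal $d$-gon, vertices to vertices, and lift under $\sigma$ and the circle reflections. Your conjugation step $\rho=M^{-1}\circ\rho_0\circ M$ with $M=\Phi\circ\psi$ a disc automorphism is correct, and the paper leaves it implicit.

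One fix is needed there. The reason you give that no side of the image polygon becomes a line is not valid, since a disc automorphism can send any geodesic to a diameter. The correct reason is the normalization $\psi(0)=z_0\in\interior T^0(\sigma)$. It places $0$ in the interior of the polygon $\psi^{-1}(T^0(\sigma))$, so no side passes through $0$. Hence every side is a genuine circle orthogonal to $\mathbb{T}$. Its bounded complementary disc cannot contain $0$, so it lies on the far side from the polygon, as Definition \ref{necklace} requires.

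Your fallback argument does not work as written. Proposition \ref{dc_exist2} is proved for $\Sigma_d$, and the degenerate $\sigma$ is explicitly not in $\Sigma_d$. The structure is also different there: the first preimage of the tile has one component behind each side, not a doubly connected component surrounding $T^0(\sigma)$.

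More importantly, a total degree count does not show what you need. Let $P_i$ be the component of $\mathbb{D}\setminus\psi^{-1}(T^0(\sigma))$ behind the $i$-th side. You need $\rho$ to map $P_i$ injectively onto $\mathbb{D}\setminus\overline{P_i}$. Only then can you glue $\rho|_{P_i}$ with its inverse into an anti-conformal automorphism of $\mathbb{D}$ fixing the side pointwise, which forces the side to be a geodesic. The Schwarz reflection across an analytic arc is only a local object.

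Two inputs are missing. First, $T(\sigma)$ contains no critical points of $\sigma$. This holds because $r$ has $d$ of its $2d-2$ critical points on $\partial D$, so $\sigma$ has exactly $d-2$ critical points. The conformal conjugacy on $\interior K(\sigma)$ to a degree $d-1$ hyperbolic polynomial with connected Julia set places all of them in $\interior K(\sigma)$. Second, a tile-by-tile analysis is needed to show that $\sigma$ maps each region of $T(\sigma)\setminus T^0(\sigma)$ behind a side bijectively onto the rest of $T(\sigma)$.

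This is the same fact that makes the paper's lifting argument well defined. With it in hand, your direct route would be a legitimate alternative. It would identify $\rho$ itself as a Nielsen map, without constructing a comparison polygon or needing the normalization step.
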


\begin{proof} We have $\sigma_{f_n}\in\Sigma_d^\mathcal{F}$ by Theorem \ref{exist_of_f}. That $\rho$ is a Nielsen for some circle reflection group as in Definition \ref{necklace} is proved for instance in \cite{MR4706575}, \cite{MR4961627}; one simply finds a circle reflection group with fundamental region conformally equivalent to $T^0(\sigma)$ (with vertices $\mapsto$ vertices) and lifts this conformal map under $\sigma$ and reflections in the circles. The last statement of the Proposition follows from the definition of conformal mating. 
\end{proof}

\begin{prop} The maps $\sigma_{f_n}$ converge to $\rho$ uniformly on compact subsets of $D$.
\end{prop}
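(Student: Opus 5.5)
Since $\sigma_{f_n}=\psi_n^{-1}\circ\sigma_n\circ\psi_n$ on $D_n$ and $\rho=\psi^{-1}\circ\sigma\circ\psi$ on $D$, I would obtain the convergence by composing the three convergent pieces: $\psi_n\to\psi$ locally uniformly on $\mathbb{D}$ and $\psi_n^{-1}\to\psi^{-1}$ locally uniformly on $T(\sigma)$ (Theorem \ref{conj_conv}), together with $\sigma_n\to\sigma$ locally uniformly on $r(\interior D)$. The last convergence comes from $r_n\to r$ uniformly on $\Chat$ via Hurwitz/inverse function arguments. Precisely, on a compact $L\subset r(\interior D)$, the branches $r_n^{-1}:L\to D$ exist for large $n$ and converge uniformly to $r^{-1}$, because $r$ is univalent on $\interior D$ and $r_n$ is univalent near $D$. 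Hence $\sigma_n=r_n\circ\rho_D\circ r_n^{-1}\to\sigma$ uniformly on $L$.

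Fix a compact $K\subset D=\mathbb{D}\setminus\psi^{-1}(T^0(\sigma))$.

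\textbf{Step 1.} $\psi(K)$ is a compact subset of $T(\sigma)\setminus T^0(\sigma)$, and this set is contained in $r(\interior D)$ up to the finitely many cusp points $r(\CP(r)\cap\partial D)$, which lie in $K(\sigma)$ and not in $T(\sigma)$. So $L:=\psi(K)$ is a compact subset of $r(\interior D)\cap T(\sigma)$.

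\textbf{Step 2.} Since $\psi_n\to\psi$ uniformly on $K$, there is a compact neighborhood $L'\subset r(\interior D)$ of $L$ with $\psi_n(K)\subset L'$ for large $n$. In particular $K\subset D_n$ for large $n$: indeed $T^0(\sigma_n)=\Chat\setminus r_n(D)$ stays away from $L'$ because $r_n(D)\to r(D)$.

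\textbf{Step 3.} By equicontinuity on $L'$ and $\sigma_n\to\sigma$ uniformly there, $\sigma_n\circ\psi_n\to\sigma\circ\psi$ uniformly on $K$.

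\textbf{Step 4.} The limit set $\sigma\circ\psi(K)=\psi\circ\rho(K)$ is a compact subset of $T(\sigma)$, since $\rho(K)$ is compact in $\mathbb{D}$. Choose a compact neighborhood $L''\subset T(\sigma)$ of it. For large $n$, $\sigma_n\psi_n(K)\subset L''$, and $\psi_n^{-1}\to\psi^{-1}$ uniformly on $L''$. These maps are equicontinuous there: they are univalent with uniformly bounded derivatives on $L''$ by Cauchy estimates, being $\mathbb{D}$-valued. Hence $\psi_n^{-1}\circ\sigma_n\circ\psi_n\to\psi^{-1}\circ\sigma\circ\psi=\rho$ uniformly on $K$.

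The main obstacle is Step 1 together with the domain bookkeeping: one must check that $\psi(K)$ avoids $T^0(\sigma)$ and, in the degenerate situation, the cusp points, and that $K$ eventually lies in $D_n$. I would handle this using the definition of the desingularized droplet, which places the cusp images outside $T(\sigma)$, combined with the openness of $\mathbb{D}\setminus\psi^{-1}(T^0(\sigma))$ relative to $\mathbb{D}$ and the Hausdorff-type convergence $r_n(\partial D)\to r(\partial D)$.
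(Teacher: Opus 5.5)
Your proposal is correct and follows the same route as the paper, whose proof says only that the convergence follows from $r_n\to r$ (giving $\sigma_n\to\sigma$ locally uniformly on $r(\interior D)$) together with Theorem \ref{conj_conv}. Your Steps 1--4 supply the bookkeeping the paper leaves implicit: that $\psi(K)$ avoids $T^0(\sigma)$ and the cusp points, that $K\subset D_n$ for large $n$ via the Hurwitz argument, and the composition estimate.
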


\begin{proof} This follows since $r_n\rightarrow r$ and by Theorem \ref{conj_conv}. 
\end{proof}

\noindent \emph{Proof of Theorem \ref{reconciliation_thm}.} Recall $\sigma_{f_n}$ is conjugate via $\psi_n$ to $r_n$ and $\sigma$ is conjugate via $\psi$ to $r$. Since $r_n\rightarrow r$ and $\psi_n\rightarrow \psi$, $\psi_n^{-1}\rightarrow \psi^{-1}$ by Theorem \ref{conj_conv}, the conclusion follows. \qed


\begin{thebibliography}{LLMM23b}

\bibitem[AIPP15]{MR3432154}
Kari Astala, Oleg Ivrii, Antti Per\"{a}l\"{a}, and Istv\'{a}n Prause.
\newblock Asymptotic variance of the {B}eurling transform.
\newblock {\em Geom. Funct. Anal.}, 25(6):1647--1687, 2015.

\bibitem[BF14]{MR3445628}
Bodil Branner and N\'{u}ria Fagella.
\newblock {\em Quasiconformal surgery in holomorphic dynamics}, volume 141 of
  {\em Cambridge Studies in Advanced Mathematics}.
\newblock Cambridge University Press, Cambridge, 2014.
\newblock With contributions by Xavier Buff, Shaun Bullett, Adam L. Epstein,
  Peter Ha\"{\i}ssinsky, Christian Henriksen, Carsten L. Petersen, Kevin M.
  Pilgrim, Tan Lei and Michael Yampolsky.

\bibitem[BLLM26]{MR5049498}
Shaun Bullett, Luna Lomonaco, Mikhail Lyubich, and Sabyasachi Mukherjee.
\newblock Mating parabolic rational maps with {H}ecke groups.
\newblock {\em Proc. Lond. Math. Soc. (3)}, 132(3):Paper No. e70132, 43, 2026.

\bibitem[DH85]{MR816367}
Adrien Douady and John~Hamal Hubbard.
\newblock On the dynamics of polynomial-like mappings.
\newblock {\em Ann. Sci. \'Ecole Norm. Sup. (4)}, 18(2):287--343, 1985.

\bibitem[Gus90]{MR1094715}
Bj\"orn Gustafsson.
\newblock On quadrature domains and an inverse problem in potential theory.
\newblock {\em J. Analyse Math.}, 55:172--216, 1990.

\bibitem[LLM22a]{MR4365080}
Russell Lodge, Yusheng Luo, and Sabyasachi Mukherjee.
\newblock Circle packings, kissing reflection groups and critically fixed
  anti-rational maps.
\newblock {\em Forum Math. Sigma}, 10:Paper No. e3, 38, 2022.

\bibitem[LLM22b]{MR4536467}
Russell Lodge, Yusheng Luo, and Sabyasachi Mukherjee.
\newblock On deformation space analogies between {K}leinian reflection groups
  and antiholomorphic rational maps.
\newblock {\em Geom. Funct. Anal.}, 32(6):1428--1485, 2022.

\bibitem[LLM24]{luo2024generaldynamicaltheoryschwarz}
Yusheng Luo, Mikhail Lyubich, and Sabyasachi Mukherjee.
\newblock A general dynamical theory of schwarz reflections, b-involutions, and
  algebraic correspondences, 2024.

\bibitem[LLMM23a]{MR4706575}
Seung-Yeop Lee, Mikhail Lyubich, Nikolai~G. Makarov, and Sabyasachi Mukherjee.
\newblock Dynamics of {S}chwarz reflections: the mating phenomena.
\newblock {\em Ann. Sci. \'Ec. Norm. Sup\'er. (4)}, 56(6):1825--1881, 2023.

\bibitem[LLMM23b]{MR4543152}
Russell Lodge, Mikhail Lyubich, Sergei Merenkov, and Sabyasachi Mukherjee.
\newblock On dynamical gaskets generated by rational maps, {K}leinian groups,
  and {S}chwarz reflections.
\newblock {\em Conform. Geom. Dyn.}, 27:1--54, 2023.

\bibitem[LLMM25]{MR4940720}
Seung-Yeop Lee, Mikhail Lyubich, Nikolai~G. Makarov, and Sabyasachi Mukherjee.
\newblock Schwarz reflections and the tricorn.
\newblock {\em Ann. Inst. Fourier (Grenoble)}, 75(5):1987--2100, 2025.

\bibitem[LM16]{MR3454377}
Seung-Yeop Lee and Nikolai~G. Makarov.
\newblock Topology of quadrature domains.
\newblock {\em J. Amer. Math. Soc.}, 29(2):333--369, 2016.

\bibitem[LM26a]{sabyaicmpaper}
Luna Lomonaco and Sabyasachi Mukherjee.
\newblock Algebraic correspondences and schwarz reflections: Where rational
  dynamics meets kleinian groups.
\newblock In {\em Proceedings of the International Congress of Mathematicians},
  volume~5, page 168–190, 2026.

\bibitem[LM26b]{MR5098194}
Mikhail Lyubich and Sabyasachi Mukherjee.
\newblock Mirrors of conformal dynamics: interplay between anti-rational maps,
  reflection groups, {S}chwarz reflections, and correspondences.
\newblock In {\em Algebraic, complex, and arithmetic dynamics}, Simons Symp.,
  pages 341--476. Springer, Cham, [2026] \copyright 2026.

\bibitem[LMM21]{MR4273178}
Kirill Lazebnik, Nikolai~G. Makarov, and Sabyasachi Mukherjee.
\newblock Univalent polynomials and {H}ubbard trees.
\newblock {\em Trans. Amer. Math. Soc.}, 374(7):4839--4893, 2021.

\bibitem[LMM22]{MR4381220}
Kirill Lazebnik, Nikolai~G. Makarov, and Sabyasachi Mukherjee.
\newblock Bers slices in families of univalent maps.
\newblock {\em Math. Z.}, 300(3):2771--2808, 2022.

\bibitem[LMM24]{MR4806290}
Mikhail Lyubich, Jacob Mazor, and Sabyasachi Mukherjee.
\newblock Antiholomorphic correspondences and mating {I}: {R}ealization
  theorems.
\newblock {\em Commun. Am. Math. Soc.}, 4:495--547, 2024.

\bibitem[LMMN25]{MR4961627}
Mikhail~Yu. Lyubich, Sergei Merenkov, Sabyasachi Mukherjee, and Dimitrios
  Ntalampekos.
\newblock David extension of circle homeomorphisms, welding, mating, and
  removability.
\newblock {\em Mem. Amer. Math. Soc.}, 313(1588):v+110, 2025.

\bibitem[LN26]{LUO_NTALAMPEKOS_2026}
Yusheng Luo and Dimitrios Ntalampekos.
\newblock Piecewise quasiconformal dynamical systems of the unit circle.
\newblock {\em Ergodic Theory and Dynamical Systems}, page 1–41, 2026.

\bibitem[Mil06]{MilnorCDBook}
John Milnor.
\newblock {\em Dynamics in one complex variable}, volume 160 of {\em Annals of
  Mathematics Studies}.
\newblock Princeton University Press, Princeton, NJ, third edition, 2006.

\bibitem[MM22]{MR4472055}
Mahan Mj and Sabyasachi Mukherjee.
\newblock Combination theorems in groups, geometry and dynamics.
\newblock In {\em In the tradition of {T}hurston {II}. {G}eometry and groups},
  pages 331--383. Springer, Cham, [2022] \copyright 2022.

\bibitem[MM23a]{MR4588722}
Mahan Mj and Sabyasachi Mukherjee.
\newblock Combining rational maps and {K}leinian groups via orbit equivalence.
\newblock {\em Proc. Lond. Math. Soc. (3)}, 126(5):1740--1809, 2023.

\bibitem[MM23b]{MR4667419}
Mahan Mj and Sabyasachi Mukherjee.
\newblock The {S}ullivan dictionary and {B}owen-{S}eries maps.
\newblock {\em EMS Surv. Math. Sci.}, 10(1):179--221, 2023.

\bibitem[MM25]{MR4961767}
Mahan Mj and Sabyasachi Mukherjee.
\newblock Matings, holomorphic correspondences, and a {B}ers slice.
\newblock {\em J. \'{E}c. polytech. Math.}, 12:1445--1502, 2025.

\bibitem[PM12]{MR3088260}
Carsten~Lunde Petersen and Daniel Meyer.
\newblock On the notions of mating.
\newblock {\em Ann. Fac. Sci. Toulouse Math. (6)}, 21(5):839--876, 2012.

\bibitem[Put94]{MR1302653}
Mihai Putinar.
\newblock On a class of finitely determined planar domains.
\newblock {\em Math. Res. Lett.}, 1(3):389--398, 1994.

\bibitem[Ric72]{Ric72}
S.~Richardson.
\newblock {Hele Shaw} flows with a free boundary produced by the injection of
  fluid into a narrow channel.
\newblock {\em Journal of Fluid Mechanics}, 56(4):609--618, 1972.

\bibitem[RM25]{rashmita2025topologysingularitiesquadraturedomains}
Rashmita and Sabyasachi Mukherjee.
\newblock On topology and singularities of quadrature domains, 2025.

\bibitem[WY17]{MR3716945}
Xiaoguang Wang and Yongcheng Yin.
\newblock Global topology of hyperbolic components: {C}antor circle case.
\newblock {\em Proc. Lond. Math. Soc. (3)}, 115(4):897--923, 2017.

\bibitem[WZ03]{MR1986427}
P.~Wiegmann and A.~Zabrodin.
\newblock Large scale correlations in normal non-{H}ermitian matrix ensembles.
\newblock volume~36, pages 3411--3424. 2003.
\newblock Random matrix theory.

\end{thebibliography}
\end{document}